\newtheorem{theorem}{Theorem}[section]
\newtheorem{proposition}[theorem]{Proposition}
\newtheorem{corollary}[theorem]{Corollary}
\newtheorem{lemma}[theorem]{Lemma}
\newtheorem{remark}[theorem]{Remark}
\newcommand{\dom}{\mathbf{d}}
\newcommand{\ran}{\mathbf{r}}
\newcommand{\Hom}{\mathop{\mathrm{Hom}}\nolimits}
\newcommand{\Res}{\mathop{\mathrm{Res}}\nolimits}
\newcommand{\Ind}{\mathop{\mathrm{Ind}}\nolimits}
\begin{document}

\title[Inverse semigroups and groupoids]{The \'{e}tale groupoid of an inverse semigroup as a groupoid of filters}

\author{M.~V.~Lawson}
\address{Department of Mathematics\\
and the\\
Maxwell Institute for Mathematical Sciences\\
Heriot-Watt University\\
Riccarton\\
Edinburgh~EH14~4AS\\
Scotland}
\email{markl@ma.hw.ac.uk}

\author{S.~W.~Margolis}
\address{Department of Mathematics\\
Bar-Ilan University\\
52900 Ramat Gan\\
Israel}
\email{margolis@math.biu.ac.il}

\author{B.~Steinberg}
\address{School of Mathematics and Statistics\\
Carleton University\\
1125 Colonel By Drive\\
Ottawa\\
Ontario K1S 5B6\\
Canada}
\email{bsteinbg@math.carleton.ca}

\begin{abstract}
Paterson showed how to construct an \'etale groupoid from an inverse semigroup using ideas from functional analysis.
This construction was later simplified by Lenz.
We show that Lenz's construction can itself be further simplified by using filters:
the topological groupoid associated with an inverse semigroup is precisely a groupoid of filters.
In addition, idempotent filters are closed inverse subsemigroups and so determine transitive representations by means of partial bijections.
This connection between filters and representations by partial bijections is exploited
to show how linear representations of inverse semigroups can be constructed from the groups occuring in the associated topological groupoid.\\

\noindent
2000 {\em Mathematics Subject Classification}: 20M18, 20M30, 18B40, 46L05.
\end{abstract}

\maketitle

\begin{center}
This paper is dedicated to the memory of our friend and colleague\\ Steve Haataja
\end{center}

\section{Introduction and motivation}\setcounter{theorem}{0}

In his influential book, Renault \cite{Renault} showed how to construct $C^{\ast}$-algebras from locally compact topological groupoids.
This can be seen as a far-reaching generalization of both commutative $C^{\ast}$-algebras and finite dimensional $C^{\ast}$-algebras.
From this perspective, locally compact topological groupoids can be viewed as `non-commutative topological spaces'.
Renault also showed that in addition to groupoids and $C^{\ast}$-algebras, a third class of structures naturally intervenes: inverse semigroups.
Local bisections of topological groupoids form inverse semigroups and, conversely, inverse semigroups can be used to construct topological groupoids.

The relationship between inverse semigroups and topological groupoids can be seen as a generalization of that between (pre)sheaves of groups
and their corresponding display spaces,
since an inverse semigroup with central idempotents is a presheaf of groups over its semilattice of idempotents.
This relationship has been investigated by a number of authors:
notably Paterson \cite{Paterson}, Kellendonk \cite{Kellendonk1,Kellendonk2,Kellendonk3,Kellendonk4} and Resende \cite{R}.
Our paper is related to Paterson's work but mediated through a more recent redaction due to Daniel Lenz \cite{Lenz}.

We prove two main results.
First, we show that Lenz's construction of the topological groupoid can be interpreted entirely in terms of down-directed cosets on inverse semigroups
---
these are precisely the filters in an inverse semigroup.
Such filters arise naturally from those transitive actions which we term `universal'.
Second, we show how representations of an inverse semigroup can be constructed from the groups occuring in the associated topological groupoid.
This is related to Steinberg's results on constructing finite-dimensional representations of inverse semigroups
using groupoid techniques described in \cite{S}.
The first result proved in this paper has already been developed further in \cite{Law4,LL}.

Lenz \cite{Lenz} was the main spur that led us to write this paper but in the course of doing so,
we realized that the first four chapters of Ruyle's unpublished thesis \cite{Ruyle}
could be viewed as a major contribution to the aims of this paper in the case of free inverse monoids.
Ruyle's work has proved indispensible for our Section~2.
In addition, Leech \cite{Leech}, with its emphasis on the order-theoretic structure of inverse semigroups,
can be seen with mathematical hindsight to be a precursor of our approach.
Last, but not least, Boris Schein in a number of seminars talked about ways of constructing {\em infinitesimal elements}
of an inverse semigroup: the maximal filters of an inverse semigroup can be regarded as just that \cite{Schein3,Schein4}.

For general inverse semigroup theory we refer the reader to \cite{Law2}.
However, we note the following.
The product in a semigroup will usually be denoted by concatenation
but sometimes we shall use $\cdot$ for emphasis; we shall also use it to denote actions.
In an inverse semigroup $S$ we define
$$\dom (s) = s^{-1}s \mbox{ and } \ran (s) = ss^{-1}.$$
Green's relation $\mathcal{H}$ can be defined in terms of this notation as follows:
$s \mathcal{H} t$ if and only if $\dom (s) = \dom (t)$ and $\ran (s) = \ran (t)$.
If $e$ is an idempotent in a semigroup $S$ then $G_{e}$ will denote the $\mathcal{H}$-class in $S$ containing $e$;
this is a maximal subgroup.
The natural partial order will be the only partial order considered when we deal with inverse semigroups.
If $X \subseteq S$ then $E(X)$ denotes the set of idempotents in $X$.
An inverse subsemigroup of $S$ is said to be {\em wide} if it contains all the idempotents of $S$.
A {\em primitive idempotent} $e$ in an inverse semigroup $S$ with zero is one with the property that
if $f \leq e$ then either $f = e$ or $f = 0$.
Let $S$ be an inverse semigroup.
The {\em minimum group congruence} $\sigma$ on $S$ is defined by $a \, \sigma \, b$ iff $c \leq a,b$ for some $c \in S$.
This congruence has the property that $S/\sigma$ is a group,
and if $\rho$ is any congruence on $S$ for which $S/\rho$ is a group,
we have that $\sigma \subseteq \rho$.
We denote by $\sigma^{\natural}$ the associated natural homomorphism $S \rightarrow S/\sigma$.
See \cite{Law2} for more information on this important congruence.

\section{The structure of transitive actions}\setcounter{theorem}{0}

In this section, we shall begin by reviewing the general theory of representations of inverse semigroups by partial permutations.
Chapter~IV, Section~4 of \cite{Petrich} contains an exposition of this elementary theory and we refer the reader there for any proofs we omit.
We also incorporate some results by Ruyle from \cite{Ruyle} which can be viewed as anticipating some of the ideas in this paper.
We then introduce the concept of universal transitive actions which provides the connection with the work of Lenz to be explained in Section~3.

\subsection{The classical theory}

A {\em representation} of an inverse semigroup by means of partial bijections (or partial permutations)
is a homomorphism $\theta \colon \: S \rightarrow I(X)$ to the symmetric inverse monoid on a set $X$.
A representation of an inverse semigroup in this sense leads to a corresponding notion of an action of the inverse semigroup $S$ on the set $X$:
the associated action is defined by $s \cdot x = \theta (s)(x)$, if $x$ belongs to the set-theoretic domain of $\theta (s)$.
The action is therefore a partial function from $S \times X$ to $X$ mapping $(s,x)$ to $s \cdot x$ when $\exists s \cdot x$
satisfying the two axioms:
\begin{description}
\item[{\rm (A1)}] If $\exists e \cdot x$ where $e$ is an idempotent then $e \cdot x = x$.

\item[{\rm (A2)}] $\exists (st) \cdot x$ iff $\exists s \cdot (t \cdot x)$ in which case they are equal.
\end{description}
It is easy to check that representations and actions are different ways of describing the same thing.
For convenience, we shall use the words `action' and `representation' interchangeably:
if we say the inverse semigroup $S$ acts on a set $X$ then this will imply the existence
of an appropriate homomorphism from $S$ to $I(X)$.
If $S$ acts on $X$ we shall often refer to $X$ as a {\em space} or as an {\em $S$-space} and its elements as {\em points}.
A subset $Y \subseteq X$ closed under the action is called a {\em subspace}.
Disjoint unions of actions are again actions.
An action is said to be {\em effective} if for each $x \in X$ there is $s \in S$ such that $\exists s \cdot x$.
We shall assume that all our actions are effective.
An effective action of an inverse semigroup $S$ on the set $X$
induces an equivalence relation $\sim$ on the set $X$ when we define
$x \sim y$ iff $s \cdot x = y$ for some $s \in S$.
The action is said to be {\em transitive} if $\sim$ is $X \times X$.
Just as in the theory of permutation representations of groups,
every representation of an inverse semigroup is a disjoint union of transitive representations.
Thus the transitive representations of inverse semigroups are of especial significance.

Let $X$ and $Y$ be $S$-spaces.
A {\em morphism} from $X$ to $Y$ is a function $\alpha \colon \: X \rightarrow Y$
such that $\exists s \cdot x$ implies that $\exists s \cdot \alpha (x)$ and $\alpha (s \cdot x) = s \cdot \alpha (x)$.
A {\em strong morphism} from $X$ to $Y$ is a function $\alpha \colon \: X \rightarrow Y$
such that $\exists s \cdot x \Leftrightarrow \exists s \cdot \alpha (x)$ and if $\exists s \cdot x$ then $\alpha (s \cdot x) = s \cdot \alpha (x)$.
Bijective strong morphisms are called {\em equivalences}.
The proofs of the following two lemmas are straightforward.

\begin{lemma}\mbox{}
\begin{description}

\item[{\rm (i)}] Identity functions are (strong) morphisms.

\item[{\rm (ii)}] The composition of (strong) morphisms is again a (strong) morphism.

\end{description}
\end{lemma}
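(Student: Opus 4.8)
The plan is to verify each clause directly from the defining conditions of (strong) morphism, handling the ordinary and strong cases in parallel and pinpointing the one place where the extra strength is genuinely used.

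For (i), consider the identity map $1_X \colon X \to X$. Both morphism conditions are immediate: if $\exists s \cdot x$ then trivially $\exists s \cdot 1_X(x)$ since $1_X(x) = x$, and the compatibility equation $1_X(s \cdot x) = s \cdot x = s \cdot 1_X(x)$ holds by definition. For the strong case, the existence condition $\exists s \cdot x \Leftrightarrow \exists s \cdot 1_X(x)$ is a tautology, because the two sides are literally the same statement, so no extra work is required.

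For (ii), let $\alpha \colon X \to Y$ and $\beta \colon Y \to Z$ be morphisms and consider the composite $\beta \circ \alpha \colon X \to Z$. First I would treat existence and compatibility in the forward direction: assuming $\exists s \cdot x$, the morphism property of $\alpha$ gives $\exists s \cdot \alpha(x)$ together with $\alpha(s \cdot x) = s \cdot \alpha(x)$, and then the morphism property of $\beta$ applied at the point $\alpha(x)$ gives $\exists s \cdot \beta(\alpha(x))$ together with $\beta(s \cdot \alpha(x)) = s \cdot \beta(\alpha(x))$. Substituting the first equation into the second yields $(\beta \circ \alpha)(s \cdot x) = s \cdot (\beta \circ \alpha)(x)$, which is precisely the compatibility condition, while $\exists s \cdot (\beta \circ \alpha)(x)$ has just been established. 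This already shows that the composite of two morphisms is a morphism.

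The only step that invokes the stronger hypothesis is the reverse existence implication for strong morphisms. Assuming $\alpha$ and $\beta$ are strong and that $\exists s \cdot (\beta \circ \alpha)(x)$, i.e. $\exists s \cdot \beta(\alpha(x))$, the strong property of $\beta$ applied at $\alpha(x)$ converts this into $\exists s \cdot \alpha(x)$, and the strong property of $\alpha$ then converts it into $\exists s \cdot x$; combined with the forward direction this gives the biconditional $\exists s \cdot x \Leftrightarrow \exists s \cdot (\beta \circ \alpha)(x)$. The compatibility equation is the same computation as before, so $\beta \circ \alpha$ is a strong morphism. I expect this backward peeling through the two strong conditions, in the correct order, to be the only point needing any care; everything else is a direct substitution, which is why the statement is fairly described as straightforward.
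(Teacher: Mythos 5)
Your proof is correct and is exactly the routine verification the paper has in mind: the paper omits the proof entirely, stating only that ``the proofs of the following two lemmas are straightforward,'' and your direct unwinding of the definitions (with the reverse existence implication peeled back through $\beta$ then $\alpha$ in the strong case) is the intended argument.
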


\begin{lemma} Let $S$ be an inverse semigroup acting on $X$, $Y$ and $Z$
\begin{description}

\item[{\rm (i)}] The image of a strong morphism $\alpha \colon \: X \rightarrow Y$  is a subspace of $Y$.

\item[{\rm (ii)}] If $X$ and $Y$ are transitive $S$-spaces and $\alpha \colon \: X \rightarrow Y$ is a strong morphism
then $\alpha$ is surjective.

\end{description}
\end{lemma}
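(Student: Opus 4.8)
The plan is to treat the two statements separately, in each case simply unwinding the definitions of strong morphism, subspace, and transitivity and then pushing the action back and forth across $\alpha$. Both arguments hinge on the same feature of a strong morphism, namely the existence biconditional $\exists s \cdot x \Leftrightarrow \exists s \cdot \alpha (x)$, which lets one transport the assertion that $s$ is defined at a point between $X$ and its image.

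For (i) I would set $Z = \alpha (X)$ and verify directly that $Z$ is closed under the action. So I take $y \in Z$ with $\exists s \cdot y$ and aim to show $s \cdot y \in Z$. Choosing $x \in X$ with $\alpha (x) = y$, I have $\exists s \cdot \alpha (x)$; the strong-morphism biconditional $\exists s \cdot x \Leftrightarrow \exists s \cdot \alpha (x)$ then gives $\exists s \cdot x$, and the equivariance clause yields $s \cdot y = s \cdot \alpha (x) = \alpha (s \cdot x) \in \alpha (X) = Z$, which is exactly closure. The only role of the ``strong'' hypothesis here is that it lets me convert $\exists s \cdot \alpha (x)$ back into $\exists s \cdot x$; for an ordinary morphism that implication runs the wrong way, so the image need not be a subspace.

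For (ii) I would show that every point of $Y$ lies in $\alpha (X)$. Fix $x_{0} \in X$ and put $y_{0} = \alpha (x_{0})$. Given an arbitrary $y \in Y$, transitivity of $Y$ supplies $s \in S$ with $s \cdot y_{0} = y$, whence $\exists s \cdot \alpha (x_{0})$. Using the biconditional again gives $\exists s \cdot x_{0}$, and then $\alpha (s \cdot x_{0}) = s \cdot \alpha (x_{0}) = y$, so $y \in \alpha (X)$ and $\alpha$ is onto. Equivalently, one may simply quote (i): $\alpha (X)$ is a nonempty subspace of the transitive space $Y$, and a nonempty subspace of a transitive space is the whole space (move any fixed $y_{0} \in \alpha(X)$ onto an arbitrary $y$ by some $s$ and use closure). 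Note that transitivity already furnishes a point $x_{0}$, so the degenerate case $X = \emptyset$ does not arise.

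As for where the real work lies: there is essentially no obstacle, exactly as the authors indicate — each claim reduces to a one-line manipulation once the definitions are to hand. The only point deserving care is the direction of the existence biconditional, since in both parts the crucial move is converting $\exists s \cdot \alpha (x)$ into $\exists s \cdot x$; this is precisely what separates strong morphisms from morphisms and makes the statements sharp. A secondary technical remark, needed only if one wishes to justify that transitivity really means a single $s$ carries $y_{0}$ to $y$, is that $s \cdot x = y$ forces $s^{-1} \cdot y = x$, so that $\sim$ is genuinely symmetric; this follows from (A1) and (A2) via $(s^{-1}s) \cdot x = x$, but it is background to the definition of $\sim$ rather than a step I actually need above.
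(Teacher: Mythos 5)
Your proof is correct and is exactly the routine verification the paper has in mind: the paper omits these proofs, remarking only that they are straightforward, and your argument---using the existence biconditional $\exists s \cdot x \Leftrightarrow \exists s \cdot \alpha(x)$ of a strong morphism to pull definedness back from $Y$ to $X$ and then applying equivariance---is the intended one, with the direction of the biconditional (the point that fails for ordinary morphisms) correctly identified as the crux. The only quibble is your final remark that transitivity itself furnishes a point $x_{0}$: the empty space is vacuously transitive, so nonemptiness is really an implicit standing convention (e.g.\ via effectiveness of the actions on nonempty sets) rather than a consequence of transitivity, but this degenerate case does not affect the substance of either part.
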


If we fix an inverse semigroup $S$ there are a number of categories of actions associated with it:
actions and morphisms, actions and strong morphisms,
transitive actions and morphisms, and transitive actions and strong morphisms.
As we indicated above, these two categories of transitive actions will be of central importance.

A {\em congruence} on $X$ is an equivalence relation $\sim$ on the set $X$ such that
if $x \sim y$ and if $\exists s \cdot x$ and $\exists s \cdot y$
then $s \cdot x \sim s \cdot y$.
A {\em strong congruence} on $X$ is an equivalence relation $\approx$ on the set $X$ such that
if $x \approx y$ and $s \in S$  we have that $\exists s \cdot x \Leftrightarrow \exists s \cdot y$,
and if the actions are defined then $s \cdot x \approx s \cdot y$.

Strong morphisms and strong congruences are united by a classical first isomorphism theorem.
Recall that the {\em kernel} of a function is the equivalence relation induced on its domain.
The proofs of the following are routine.

\begin{proposition} \mbox{}
\begin{description}

\item[{\rm (i)}] Let $\alpha \colon \: X \rightarrow Y$ be a strong morphism.
Then the kernel of $\alpha$ is a strong congruence.

\item[{\rm (ii)}] Let $\sim$ be a strong congruence on $X$.
Denote the $\sim$-class containing the element $x$ by $[x]$.
Define $s \cdot [x] = [s \cdot x]$ if $\exists s \cdot x$.
Then this defines an action $S$ on the set of $\sim$-congruence classes $X/\sim$
and the natural map $\nu \colon \: X \rightarrow X/\sim$ is a strong morphism.

\item[{\rm (iii)}] Let $\alpha \colon \: X \rightarrow Y$ be a strong morphism,
let its kernel be $\sim$ and let $\nu \colon \: X \rightarrow X/\sim$ be the associated natural map.
Then there is a unique injective strong morphism
$\beta \colon \: X/\sim \rightarrow Y$ such that $\beta \nu = \alpha$.

\end{description}
\end{proposition}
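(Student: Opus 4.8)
The plan is to handle the three parts in sequence, each reducing to a direct verification from the definitions of strong morphism, strong congruence, and kernel. For (i), write $\sim$ for the kernel of $\alpha$, so $x \sim y$ means $\alpha(x) = \alpha(y)$. Fixing $s \in S$ and $x \sim y$, I would chain the defining biconditions of a strong morphism: $\exists s \cdot x \Leftrightarrow \exists s \cdot \alpha(x)$, and since $\alpha(x) = \alpha(y)$ this is the same as $\exists s \cdot \alpha(y) \Leftrightarrow \exists s \cdot y$. This gives the domain requirement of a strong congruence. Assuming these actions defined, the equalities $\alpha(s \cdot x) = s \cdot \alpha(x) = s \cdot \alpha(y) = \alpha(s \cdot y)$ show $s \cdot x \sim s \cdot y$, completing (i).

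For (ii), the first and most delicate task is to check that $s \cdot [x] = [s \cdot x]$ is well defined, and this is the only place where the \emph{strong} hypothesis is genuinely used. Given $x \sim x'$, the domain clause of a strong congruence guarantees $\exists s \cdot x \Leftrightarrow \exists s \cdot x'$, so the assertion ``$\exists s \cdot [x]$'' is independent of the chosen representative; and when the actions are defined, the value clause gives $s \cdot x \sim s \cdot x'$, i.e.\ $[s \cdot x] = [s \cdot x']$. With well-definedness secured, axioms (A1) and (A2) transfer mechanically from $X$ to $X/\sim$: for (A1), $\exists e \cdot [x]$ forces $\exists e \cdot x$, whence $e \cdot x = x$ and $e \cdot [x] = [x]$; for (A2) I would unwind both sides to a representative, noting that $\exists s \cdot (t \cdot [x])$ amounts to $\exists s \cdot (t \cdot x)$ and then invoking (A2) for the action on $X$, with the values matching term by term. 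That $\nu$ is a strong morphism is then immediate: $\exists s \cdot x \Leftrightarrow \exists s \cdot [x]$ is exactly the definition of the action on classes, and $\nu(s \cdot x) = [s \cdot x] = s \cdot [x] = s \cdot \nu(x)$.

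For (iii), I would define $\beta([x]) = \alpha(x)$; this is well defined and injective precisely because $\sim$ is the kernel of $\alpha$, so $[x] = [x']$ holds if and only if $\alpha(x) = \alpha(x')$. That $\beta$ is a strong morphism again comes from chaining biconditions, now using that both $\nu$ and $\alpha$ are strong: $\exists s \cdot [x] \Leftrightarrow \exists s \cdot x \Leftrightarrow \exists s \cdot \alpha(x) = \exists s \cdot \beta([x])$, and $\beta(s \cdot [x]) = \beta([s \cdot x]) = \alpha(s \cdot x) = s \cdot \alpha(x) = s \cdot \beta([x])$. The identity $\beta \nu = \alpha$ holds by construction, and uniqueness follows because $\nu$ is surjective, so any strong morphism $\beta'$ with $\beta' \nu = \alpha$ must satisfy $\beta'([x]) = \beta'(\nu(x)) = \alpha(x) = \beta([x])$.

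I expect the only real subtlety to be the well-definedness step in (ii): both the existence and the value of $s \cdot [x]$ must be shown independent of the representative, and it is exactly the two clauses of a strong congruence, rather than those of an ordinary congruence, that supply this. Everything else is a routine transfer of (A1), (A2), and the strong-morphism biconditions between $X$ and $X/\sim$.
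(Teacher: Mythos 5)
Your proof is correct, and it coincides with what the paper intends: the paper omits this argument entirely, stating only that ``the proofs of the following are routine,'' and your verification is exactly that routine argument---chaining the strong-morphism biconditionals for (i) and (iii), and using the two clauses of a strong congruence to get well-definedness of the quotient action in (ii), which is indeed the only point of substance. Nothing further is needed.
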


The above result tells us that the category of transitive representations of a fixed inverse semigroup
with strong morphisms between them has a particularly nice structure.

We may analyze transitive actions of inverse semigroups in a way generalizing the relationship between transitive group actions and subgroups.
To describe this relationship we need some definitions.
If $A \subseteq S$ is a subset then define
$$A^{\uparrow} = \{s \in S \colon \: a \leq s \mbox{ for some } a \in A \}.$$
If $A = A^{\uparrow}$ then $A$ is said to be {\em closed (upwards)}.

Let $X$ be an $S$-space.
Fix a point $x \in X$, and consider the set $S_{x}$ consisting of all
$s \in S$ such that $s \cdot x = x$.
We call $S_{x}$ the {\em stabilizer} of the point $x$.

\begin{remark}
{\em We do not assume in this paper that homomorphisms of inverse semigroups with zero preserve the zero.
If $\theta \colon S \rightarrow I(X)$ is a representation that does preserve zero then the zero of $S$ is mapped to the empty function of $I(X)$.
Clearly, the empty function cannot belong to any stabilizer.
We say that a closed inverse subsemigroup is {\em proper} if it does not contain a zero.
In the theory we summarize below, proper closed inverse subsemigroups
arise from actions where the zero acts as the empty partial function.}
\end{remark}

Now let $y \in X$ be any point.
By transitivity, there is an element $s \in S$ such that $s \cdot x = y$.
Observe that because $s \cdot x$ is defined so too is $s^{-1}s$ and that $s^{-1}s \in S_{x}$.
The set of all elements of $S$ which map $x$ to $y$ is $(sS_{x})^{\uparrow}$.

Let $H$ be a closed inverse subsemigroup of $S$.
Define a {\em left coset} of $H$ to be a set of the form $(sH)^{\uparrow}$ where $s^{-1}s \in H$.
We give the proof of the following for completeness.

\begin{lemma} \mbox{}
\begin{description}

\item[{\rm (i)}] Two cosets $(sH)^{\uparrow}$ and $(tH)^{\uparrow}$ are equal iff $s^{-1}t \in H$.

\item[{\rm (ii)}] If $(sH)^{\uparrow} \cap (tH)^{\uparrow} \neq \emptyset$ then $(sH)^{\uparrow} = (tH)^{\uparrow}$.

\end{description}
\end{lemma}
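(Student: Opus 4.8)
The plan is to prove part (i) directly from the elementary order-arithmetic of inverse semigroups, and then to extract part (ii) from (i) by showing that any point common to two cosets is itself, automatically, a coset representative. Throughout I will use only standard facts: the natural partial order is compatible with multiplication, so $a \leq b$ implies $ca \leq cb$ and $ac \leq bc$; multiplying by an idempotent decreases an element, so $ss^{-1}t \leq t$; the domain map is order-preserving, so $a \leq b$ forces $\dom(a) \leq \dom(b)$; and $H$, being a closed inverse subsemigroup, is closed under products, under inverses, and upwards under $\leq$. Recall also that, since $(sH)^{\uparrow}$ and $(tH)^{\uparrow}$ are cosets, we have $s^{-1}s \in H$ and $t^{-1}t \in H$ at our disposal.

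For the forward direction of (i), assume $(sH)^{\uparrow} = (tH)^{\uparrow}$. Because $t^{-1}t \in H$ we get $t = t(t^{-1}t) \in tH \subseteq (tH)^{\uparrow} = (sH)^{\uparrow}$, so $sh \leq t$ for some $h \in H$. Multiplying on the left by $s^{-1}$ gives $(s^{-1}s)h \leq s^{-1}t$; the left-hand side lies in $H$, and upward closure then yields $s^{-1}t \in H$. For the converse, assume $s^{-1}t \in H$, so that $t^{-1}s = (s^{-1}t)^{-1} \in H$ as well. To prove $(tH)^{\uparrow} \subseteq (sH)^{\uparrow}$, take $x$ with $th \leq x$ for some $h \in H$ and note that $s\big((s^{-1}t)h\big) = ss^{-1}(th) \leq ss^{-1}x \leq x$, while $(s^{-1}t)h \in H$; hence $x \in (sH)^{\uparrow}$. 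Interchanging the roles of $s$ and $t$ (this is where $t^{-1}s \in H$ is needed) gives the reverse inclusion, and therefore equality.

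The deduction of (ii) is where the one genuinely new observation enters, and it is the step I expect to be the crux. Suppose $z \in (sH)^{\uparrow} \cap (tH)^{\uparrow}$, say $sh \leq z$ and $tk \leq z$ with $h,k \in H$. The difficulty is that $z$ is not given to satisfy the coset condition $z^{-1}z \in H$, so (i) cannot be applied to $z$ immediately. The key is that this condition holds for free: $\dom(sh) = h^{-1}(s^{-1}s)h \in H$, and $sh \leq z$ forces $\dom(sh) \leq \dom(z) = z^{-1}z$, so upward closure of $H$ gives $z^{-1}z \in H$. Thus $(zH)^{\uparrow}$ is a bona fide coset. Now left-multiplying $sh \leq z$ by $s^{-1}$ gives $(s^{-1}s)h \leq s^{-1}z$ with left-hand side in $H$, whence $s^{-1}z \in H$, and part (i) yields $(sH)^{\uparrow} = (zH)^{\uparrow}$. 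The identical argument applied to $tk \leq z$ gives $(tH)^{\uparrow} = (zH)^{\uparrow}$, and combining the two equalities finishes the proof. The whole weight of (ii) therefore rests on the single remark that $z^{-1}z$ dominates the idempotent $\dom(sh) \in H$; once that is in hand, everything reduces cleanly to (i).
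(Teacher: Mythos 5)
Your proof is correct, and part (i) is essentially the paper's own argument: the same computation $s^{-1}sh \leq s^{-1}t$ plus upward closure in one direction, and the same left-multiplication trick for the converse. Part (ii), however, takes a genuinely different route. The paper never considers the coset generated by the common point: from $sh_{1} \leq a$ and $th_{2} \leq a$ it deduces $s^{-1}a, t^{-1}a \in H$, forms the single product $s^{-1}aa^{-1}t = (s^{-1}a)(t^{-1}a)^{-1} \in H$, notes $s^{-1}aa^{-1}t \leq s^{-1}t$ (since $aa^{-1}$ is idempotent), and concludes $s^{-1}t \in H$ by upward closure, so (i) applies once and directly. You instead promote the common point $z$ to a coset representative: your key observation that $z^{-1}z \geq h^{-1}s^{-1}sh \in H$, hence $z^{-1}z \in H$, makes $(zH)^{\uparrow}$ a legitimate coset, after which two applications of (i) and transitivity of equality finish the job. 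Your route is slightly longer, but it isolates a reusable fact of independent value --- any element of a coset is a representative of it, i.e.\ $z \in (sH)^{\uparrow}$ implies $(zH)^{\uparrow} = (sH)^{\uparrow}$ --- which is the exact analogue of the group-theoretic statement that a coset is determined by any of its elements, and which the paper itself uses silently later (e.g.\ in Proposition~3.2: ``Let $a \in A$. Then $A = (aH)^{\uparrow}$''). What the paper's route buys is brevity: the single product $s^{-1}aa^{-1}t$ replaces your intermediate coset and the double invocation of (i).
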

\proof (i) Suppose that  $(sH)^{\uparrow} = (tH)^{\uparrow}$.
Then $t \in (sH)^{\uparrow}$ and so $sh \leq t$ for some $h \in H$.
Thus $s^{-1}sh \leq s^{-1}t$.
But $s^{-1}sh \in H$ and $H$ is closed and so $s^{-1}t \in H$.

Conversely, suppose that $s^{-1}t \in H$.
Then $s^{-1}t = h$ for some $h \in H$ and so
$sh = ss^{-1}t \leq t$.
It follows that $tH \subseteq sH$ and so $(tH)^{\uparrow} \subseteq (sH)^{\uparrow}$.
The reverse inclusion follows from the fact that $t^{-1}s \in H$ since
$H$ is closed under inverses.

(ii) Suppose that $a \in (sH)^{\uparrow} \cap (tH)^{\uparrow}$.
Then $sh_{1} \leq a$ and $th_{2} \leq a$ for some $h_{1},h_{2} \in H$.
Thus $s^{-1}sh_{1} \leq s^{-1}a$ and $t^{-1}th_{2} \leq t^{-1}a$.
Hence $s^{-1}a,t^{-1}a \in H$.
It follows that $s^{-1}aa^{-1}t \in H$,
but $s^{-1}aa^{-1}t \leq s^{-1}t$.
This gives the result by (i) above.\qed\\

We denote by $S/H$ the set of all left cosets of $H$ in $S$.
The inverse semigroup $S$ acts on the set $S/H$ when we define
$$a \cdot (sH)^{\uparrow} = (asH)^{\uparrow} \mbox{ whenever} \dom (as) \in H.$$
This defines a transitive action.
The following is Lemma~IV.4.9 of \cite{Petrich} and Proposition~5.8.5 of \cite{H}.

\begin{theorem} Let $S$ act transitively on the set $X$.
Then the action is equivalent to the action of $S$ on the set $S/S_{x}$ where $x$ is any point of $X$.
\end{theorem}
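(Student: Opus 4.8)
The plan is to mimic the orbit--stabilizer theorem for groups by constructing an explicit equivalence between $X$ and $S/S_x$. First I would define the candidate map $\alpha \colon X \rightarrow S/S_{x}$ by sending a point $y$ to the set of all elements of $S$ that carry $x$ to $y$, namely $\alpha(y) = \{s \in S \colon s \cdot x = y\}$. By the discussion immediately preceding the theorem, whenever $s \cdot x = y$ this set equals the left coset $(sS_{x})^{\uparrow}$ (and $\dom(s) \in S_{x}$, since $\exists s \cdot x$ forces $\dom(s) \cdot x = x$), so $\alpha$ is well defined and lands in $S/S_{x}$.

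Next I would check that $\alpha$ is a bijection. Injectivity is immediate: by transitivity each $\alpha(y)$ is nonempty, and if $s$ lies in both $\alpha(y)$ and $\alpha(y')$ then $y = s \cdot x = y'$. For surjectivity, take any coset $(sS_{x})^{\uparrow}$; since by definition of a coset $\dom(s) = s^{-1}s \in S_{x}$, we have $\dom(s) \cdot x = x$ defined, hence $\exists s \cdot x$, and then $\alpha(s \cdot x) = (sS_{x})^{\uparrow}$.

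The heart of the argument is verifying that $\alpha$ is a strong morphism, that is, for $a \in S$ and $y \in X$ with $\alpha(y) = (sS_{x})^{\uparrow}$ we have $\exists a \cdot y \Leftrightarrow \exists a \cdot \alpha(y)$ and, when both are defined, $\alpha(a \cdot y) = a \cdot \alpha(y)$. Here I would unwind both sides against the definition of the coset action, $a \cdot (sS_{x})^{\uparrow} = (asS_{x})^{\uparrow}$ whenever $\dom(as) \in S_{x}$. Using (A2), the condition $\exists a \cdot y$, which reads $\exists a \cdot (s \cdot x)$, is equivalent to $\exists (as) \cdot x$, which in turn is equivalent to $\dom(as) \cdot x = x$, i.e.\ $\dom(as) \in S_{x}$; this is exactly the condition for $\exists a \cdot \alpha(y)$. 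When defined, (A2) gives $a \cdot y = (as) \cdot x$, so $\alpha(a \cdot y) = (asS_{x})^{\uparrow} = a \cdot \alpha(y)$. Being a bijective strong morphism, $\alpha$ is an equivalence.

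The main obstacle I anticipate is matching the \emph{domains} of definition on the two sides in the strong-morphism step: the content of the theorem lies almost entirely in the partiality, so the crux is the chain of equivalences $\exists a \cdot y \Leftrightarrow \exists (as) \cdot x \Leftrightarrow \dom(as) \in S_{x}$. This rests on the standard fact that $\exists s \cdot x \Leftrightarrow \exists \dom(s) \cdot x$ (with value $x$ when defined), which comes from realizing the action through the homomorphism $\theta \colon S \rightarrow I(X)$; once this and (A2) are in hand, the equality of values when defined is routine.
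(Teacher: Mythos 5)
Your proof is correct, and since the paper itself gives no proof of this theorem (it simply cites Lemma~IV.4.9 of Petrich and Proposition~5.8.5 of Howie), your argument---sending $y$ to the coset $(sS_{x})^{\uparrow}$ of all elements carrying $x$ to $y$ and verifying this is a bijective strong morphism---is precisely the classical argument those references give. In particular, you correctly identify and handle the one genuinely delicate point, the domain-matching chain $\exists a \cdot y \Leftrightarrow \exists (as)\cdot x \Leftrightarrow \dom(as) \in S_{x} \Leftrightarrow \exists a \cdot \alpha(y)$, via the fact that $\exists s\cdot x \Leftrightarrow \exists \dom(s)\cdot x$.
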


The following is Proposition~IV.4.13 of \cite{Petrich}.

\begin{proposition}
If $H$ and $K$ are any closed inverse subsemigroups of $S$ then they determine
equivalent actions if and only if there exists $s \in S$ such that
$$sHs^{-1} \subseteq K \mbox{ and } s^{-1}Ks \subseteq H.$$
\end{proposition}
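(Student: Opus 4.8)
The plan is to compare the two coset actions $S/H$ and $S/K$ through their base points and the stabilizers thereof. Throughout I use that the base coset of $S/H$ is $[H]=(eH)^{\uparrow}=H$ for any idempotent $e\in H$, that every coset has the form $a\cdot [H]=(aH)^{\uparrow}$ with $a^{-1}a\in H$, and two preliminary observations. First, the stabilizer of the base point is exactly $H$: if $a\cdot [H]=[H]$ then, writing this as $(aeH)^{\uparrow}=(eH)^{\uparrow}=H$, we get $ae\in H$ (since $ae=(ae)e\in aeH$), and because $ae\leq a$ and $H$ is closed we conclude $a\in H$; the reverse inclusion $H\subseteq S_{[H]}$ is immediate. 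Second, an equivalence preserves stabilizers, $S_{x}=S_{\alpha (x)}$, since a bijective strong morphism has a strong inverse and so each containment of stabilizers holds.

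For the forward implication, suppose $\alpha\colon S/H\to S/K$ is an equivalence and write $\alpha ([H])=(cK)^{\uparrow}$ with $c^{-1}c\in K$. By the two observations, $H=S_{[H]}=S_{(cK)^{\uparrow}}$. Computing this stabilizer from the coset equality criterion, $a\in S_{(cK)^{\uparrow}}$ forces $(ac)^{-1}c=c^{-1}a^{-1}c\in K$; applying this to $a=h\in H$ and using that $H$ is closed under inverses yields $c^{-1}Hc\subseteq K$. Conversely, a direct check shows $cKc^{-1}\subseteq S_{(cK)^{\uparrow}}=H$. Setting $s=c^{-1}$ then gives $sHs^{-1}=c^{-1}Hc\subseteq K$ and $s^{-1}Ks=cKc^{-1}\subseteq H$, as required.

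For the converse, assume $sHs^{-1}\subseteq K$ and $s^{-1}Ks\subseteq H$. First I extract the normalisations $ss^{-1}\in K$ and $s^{-1}s\in H$: for $e\in E(H)$ one has $ses^{-1}\in K$ and $ses^{-1}\leq ss^{-1}$, so closedness of $K$ gives $ss^{-1}\in K$, and symmetrically for $s^{-1}s$. This makes $(as^{-1}K)^{\uparrow}$ a coset whenever $(aH)^{\uparrow}$ is one, and I define $\alpha\colon S/H\to S/K$ by $\alpha ((aH)^{\uparrow})=(as^{-1}K)^{\uparrow}$. The inclusion $sHs^{-1}\subseteq K$ makes $\alpha$ well defined and everywhere defined and carries the forward halves of the strong-morphism conditions; surjectivity is automatic, since a strong morphism between transitive $S$-spaces is onto.

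The hard part will be the remaining halves of the converse: the backward implication in the definedness condition and the injectivity of $\alpha$. Both come down to the same pattern. One is handed an element $x$ with $(s^{-1}s)\,x\,(s^{-1}s)\in H$ and must deduce $x\in H$; here the inclusion $s^{-1}Ks\subseteq H$ is what produces the squeezed element, and the point is that $(s^{-1}s)\,x\,(s^{-1}s)\leq x$ (obtained by multiplying on each side by the idempotent $s^{-1}s$), so that upward closedness of $H$ delivers $x\in H$. Thus the entire weight of the argument rests on the two subsemigroups being \emph{closed}: that is precisely what lets one pass from the conjugated-and-truncated element back to the original, and it is the feature that has no counterpart in the purely group-theoretic statement.
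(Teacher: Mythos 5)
Your proof is correct, and it is worth noting at the outset that the paper itself gives no argument for this proposition: it is quoted as Proposition~IV.4.13 of Petrich, with the blanket remark earlier in Section~2 that omitted proofs are to be found there. So there is no in-paper proof to compare against line by line; what can be said is that your argument is exactly the kind of proof the paper's own toolkit supports, and all your ingredients appear in the paper. Your identification of the base-point stabilizer $S_{[H]}=H$, your use of the coset-equality criterion $(sH)^{\uparrow}=(tH)^{\uparrow}\Leftrightarrow s^{-1}t\in H$ (Lemma~2.5(i)), and your appeal to surjectivity of strong morphisms between transitive spaces (Lemma~2.2(ii)) are all stated in the paper. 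Moreover, your opening move in the converse --- extracting $ss^{-1}\in K$ and $s^{-1}s\in H$ from the two conjugation inclusions by noting $ses^{-1}\leq ss^{-1}$ and invoking upward closedness --- is verbatim the argument the paper uses to prove its Lemma~2.8, so your proof dovetails with the surrounding material. The two substantive checks you flag as the crux are indeed correct as stated: for the reverse definedness condition one gets $(s^{-1}s)\dom(ua)(s^{-1}s)\in H$ from $s^{-1}Ks\subseteq H$, and for injectivity one gets $(s^{-1}s)a^{-1}b(s^{-1}s)\in H$, and in both cases the inequality $exe\leq xe\leq x$ for an idempotent $e$ plus closedness of $H$ finishes the job; the second instance genuinely needs this inequality for a non-idempotent $x=a^{-1}b$, which your formulation covers. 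One could quibble that the map $\alpha((aH)^{\uparrow})=(as^{-1}K)^{\uparrow}$ should have its well-definedness spelled out (it follows from $a^{-1}b\in H\Rightarrow sa^{-1}bs^{-1}\in sHs^{-1}\subseteq K$ and the same coset criterion), but you assert the right statement and the verification is one line, so this is a presentational gap rather than a mathematical one.
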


The above relationship between closed inverse subsemigroups is called {\em conjugacy}
and defines an equivalence relation on the set of closed inverse subsemigroups.
The proof of the following is given for completeness.

\begin{lemma}
$H$ and $K$ are conjugate if and only if
$$(sHs^{-1})^{\uparrow} = K
\mbox{ and }
(s^{-1}Ks)^{\uparrow} = H.$$
\end{lemma}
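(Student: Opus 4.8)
The plan is to prove the two implications separately, with the understanding that both sides are quantified over the same element $s \in S$: for a fixed $s$, I will show that the inclusions $sHs^{-1} \subseteq K$ and $s^{-1}Ks \subseteq H$ of the preceding Proposition are equivalent to the equalities $(sHs^{-1})^{\uparrow} = K$ and $(s^{-1}Ks)^{\uparrow} = H$. Throughout I use that $H$ and $K$ are closed, so $H = H^{\uparrow}$ and $K = K^{\uparrow}$, together with the elementary facts that $A \mapsto A^{\uparrow}$ is monotone and that $A \subseteq A^{\uparrow}$ always.

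The implication from the equalities to conjugacy is immediate and carries no content: since $sHs^{-1} \subseteq (sHs^{-1})^{\uparrow} = K$ and $s^{-1}Ks \subseteq (s^{-1}Ks)^{\uparrow} = H$, the element $s$ witnesses that $H$ and $K$ are conjugate. For the converse, assume $sHs^{-1} \subseteq K$ and $s^{-1}Ks \subseteq H$. One inclusion of each equality follows purely formally: by monotonicity and closedness, $(sHs^{-1})^{\uparrow} \subseteq K^{\uparrow} = K$ and $(s^{-1}Ks)^{\uparrow} \subseteq H^{\uparrow} = H$. The substance is in the reverse inclusions. Here I would take an arbitrary $k \in K$ and put $h = s^{-1}ks$, which lies in $H$ because $s^{-1}Ks \subseteq H$. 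Then $shs^{-1} = \ran(s)\,k\,\ran(s)$ belongs to $sHs^{-1}$, and provided I can show $shs^{-1} \leq k$, I obtain $k \in (sHs^{-1})^{\uparrow}$ and hence $K \subseteq (sHs^{-1})^{\uparrow}$. Running the symmetric computation with the roles of $s$, $H$, $K$ interchanged gives $H \subseteq (s^{-1}Ks)^{\uparrow}$, completing both equalities.

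The one genuine step, and the place I expect the argument to turn, is the inequality $\ran(s)\,k\,\ran(s) \leq k$. This depends only on the defining behaviour of the natural partial order under multiplication by idempotents: for any idempotent $e$ and any $t \in S$ one has $et \leq t$ and $te \leq t$, since $et = e \cdot t$ and $te = t \cdot e$ exhibit these elements in the required form $(\text{idempotent})\cdot t$ and $t \cdot (\text{idempotent})$. Applying this twice with the idempotent $e = \ran(s) = ss^{-1}$ gives $\ran(s)\,k\,\ran(s) \leq \ran(s)\,k \leq k$, and transitivity of $\leq$ finishes the claim. I would flag this idempotent-sandwich inequality as the crux; once it is in place, the rest of the proof is bookkeeping with closedness and the monotonicity of $(-)^{\uparrow}$.
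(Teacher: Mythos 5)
Your proof is correct and follows essentially the same route as the paper's: the direction from the equalities to conjugacy is dismissed as immediate, one inclusion of each equality comes from closedness of $H$ and $K$, and the reverse inclusion comes from the computation $s(s^{-1}ks)s^{-1} = ss^{-1}\,k\,ss^{-1} \leq k$. The only difference is cosmetic: you spell out the idempotent-sandwich inequality $ss^{-1}\,k\,ss^{-1}\leq k$, which the paper asserts without comment.
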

\proof Let $H$ and $K$ be conjugate.
Let $e \in H$ be any idempotent.
Then $ses^{-1} \in K$.
But $ses^{-1} \leq ss^{-1}$ and so $ss^{-1} \in K$.
Similarly $s^{-1}s \in H$.
We have that $sHs^{-1} \subseteq K$ and so $(sHs^{-1})^{\uparrow} \subseteq K$.
Let $k \in K$.
Then $s^{-1}ks \in H$ and $s(s^{-1}ks)s^{-1} \in sHs^{-1}$ and
$s(s^{-1}ks)s^{-1} \leq k$.
Thus  $(sHs^{-1})^{\uparrow} = K$, as required.
The converse is immediate.\qed\\

Thus to study the transitive actions of an inverse semigroups $S$ it is enough to study the closed inverse subsemigroups of $S$ up to conjugacy.

The following result is motivated by Lemma~2.16 of Ruyle's thesis \cite{Ruyle} and brings morphisms and strong morphisms back into the picture.

\begin{theorem} Let $S$ be an inverse semigroup acting transitively on the sets $X$ and $Y$, and let $x \in X$ and $y \in Y$.
Let $S_{x}$ and $S_{y}$ be the stabilizers in $S$ of $x$ and $y$ respectively.
\begin{description}

\item[{\rm (i)}] There is a (unique) morphism $\alpha \colon \: X \rightarrow Y$ such that $\alpha (x) = y$
iff $S_{x} \subseteq S_{y}$.

\item[{\rm (ii)}] There is a (unique) strong morphism $\alpha \colon \: X \rightarrow Y$ such that $\alpha (x) = y$
iff $S_{x} \subseteq S_{y}$ {\em and } $E(S_{x}) = E(S_{y})$.

\end{description}
\end{theorem}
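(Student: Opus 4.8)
The plan is to build everything on one basic fact: for any point $p$ of an $S$-space and any $s \in S$, the action $s \cdot p$ is defined if and only if $\dom (s) \cdot p$ is defined, since $s$ and $\dom (s) = s^{-1}s$ have the same domain in the representation by partial bijections. Evaluated at the base points this reads $\exists s \cdot x \Leftrightarrow \dom (s) \in S_{x}$ (using (A1), as $\dom(s)$ is idempotent) and likewise for $y$. Combined with the fact that, by transitivity, every point of $X$ has the form $s \cdot x$, this characterization will drive the whole argument.

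For (i), the forward implication is immediate: if $\alpha$ is a morphism with $\alpha (x) = y$ and $s \in S_{x}$, then $s \cdot x = x$ is defined, so $s \cdot y = s \cdot \alpha (x) = \alpha (s \cdot x) = \alpha (x) = y$, giving $s \in S_{y}$. For the converse, assuming $S_{x} \subseteq S_{y}$, I would define $\alpha (s \cdot x) = s \cdot y$ and check it is well defined. First, if $s \cdot x$ is defined then $\dom (s) \in S_{x} \subseteq S_{y}$, so $s \cdot y$ is defined. Second, if $s \cdot x = t \cdot x$ (both defined), then $s\cdot y,t\cdot y$ are defined as just noted, and either directly or via the coset description preceding Lemma~2.5 one has $s^{-1}t \in S_{x} \subseteq S_{y}$, whence $s \cdot y = t \cdot y$. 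That $\alpha$ is then a morphism with $\alpha (x) = y$ is a routine check using (A2), and uniqueness holds because any morphism fixing $x$ must send $s \cdot x$ to $s \cdot y$.

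For (ii), a strong morphism is in particular a morphism, so $S_{x} \subseteq S_{y}$ by (i) and hence $E(S_{x}) \subseteq E(S_{y})$ automatically; the content is the reverse inclusion. Given $e \in E(S_{y})$, the point $e \cdot y = e \cdot \alpha (x)$ is defined, so by the definedness-reflecting property of strong morphisms $e \cdot x$ is defined, and since $e$ is idempotent $e \cdot x = x$ by (A1), giving $e \in S_{x}$. Conversely, assuming $S_{x} \subseteq S_{y}$ and $E(S_{x}) = E(S_{y})$, I would take the morphism $\alpha$ constructed in (i) and verify the extra condition $\exists s \cdot \alpha (p) \Rightarrow \exists s \cdot p$. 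Writing $p = t \cdot x$, if $s \cdot (t \cdot y)$ is defined then $(st) \cdot y$ is defined by (A2), so the idempotent $\dom (st)$ lies in $E(S_{y}) = E(S_{x})$; hence $(st) \cdot x$, and therefore $s \cdot p = s \cdot (t \cdot x)$, is defined.

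The main obstacle is conceptual rather than computational: recognizing that the precise hypothesis needed to reflect the \emph{existence} of actions is exactly the equality $E(S_{x}) = E(S_{y})$, since definedness of $s\cdot x$ is governed solely by whether the idempotent $\dom(s)$ lies in the stabilizer. Once the characterization $\exists s \cdot x \Leftrightarrow \dom (s) \in S_{x}$ is in hand, the well-definedness in (i) and the strongness in (ii) both reduce to tracking membership of the idempotents $\dom(s)$ and $\dom(st)$ in the relevant stabilizers, and the surrounding verifications are straightforward manipulations with (A1) and (A2).
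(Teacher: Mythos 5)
Your proposal is correct and follows essentially the same route as the paper's proof: you define $\alpha(s \cdot x) = s \cdot y$, use the stabilizer inclusion to get existence and well-definedness, and verify strongness by checking that the idempotent $\dom(st)$ (which is exactly the paper's conjugate $t^{-1}(s^{-1}s)t$) lies in $E(S_{y}) = E(S_{x})$. The only differences are organizational: you isolate the characterization $\exists s \cdot p \Leftrightarrow \dom(s) \in S_{p}$ up front and reuse part (i) inside part (ii), where the paper repeats the construction verbatim.
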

\proof (i) We begin by proving uniqueness.
Let $\alpha,\beta \colon \: X \rightarrow Y$ be morphisms such that $\alpha (x) = \beta (x) = y$.
Let $x' \in X$ be arbitrary.
By transitivity there exists $a \in S$ such that $x' = a \cdot x$.
By the definition of morphisms we have that
$\exists a \cdot \alpha (x)$ and $\exists a \cdot \beta (x)$
and that
$$\alpha (x') = \alpha (a \cdot x) = a \cdot \alpha (x)$$
and
$$\beta (x') = \beta (a \cdot x) = a \cdot \beta (x).$$
But by assumption $\alpha (x) = \beta (x) = y$ and so $\alpha (x') = \beta (x')$.
It follows that $\alpha = \beta$.

Let $\alpha \colon \: X \rightarrow Y$ be a morphism such that $\alpha (x) = y$.
Let $s \in S_{x}$.
Then $\exists s \cdot x$ and $s \cdot x = x$.
By the definition of morphism, it follows that $\exists s \cdot \alpha (x)$
and that $\alpha (s \cdot x) = s \cdot \alpha (x)$.
But $s \cdot x = x$ and so $\alpha (x) = s \cdot \alpha (x)$.
Hence $s \cdot y = y$.
We have therefore proved that $s \in S_{y}$,
and so $S_{x} \subseteq S_{y}$.

Suppose now that $S_{x} \subseteq S_{y}$.
We have to define a morphism $\alpha \colon \:X \rightarrow Y$ such that $\alpha (x) = y$.
We start by defining $\alpha (x) = y$.
Let $x' \in X$ be any point in $X$.
Then $x' = a \cdot x$ for some $a \in S$.
We need to show that $a \cdot y$ exists.
Since $a \cdot x$ exists we know that $a^{-1}a \cdot x$ exists and this is equal to $x$.
It follows that $a^{-1}a \in S_{x}$ and so $a^{-1}a \in S_{y}$, by assumption.
Thus $a^{-1}a \cdot y$ exists and is equal to $y$.
But from the existence of $a^{-1}a \cdot y$ we can deduce the existence of $a \cdot y$.
We would therefore like to define $\alpha (x') = a \cdot y$.
We have to check that this is well-defined.
Suppose that $x' = a \cdot x = b \cdot x$.
Then $b^{-1}a \cdot x = x$ and so $b^{-1}a \in S_{x}$.
By assumption, $b^{-1}a \in S_{y}$ and so
$b^{-1}a \cdot y = y$.
Thus $bb^{-1}a \cdot y = b \cdot y$
and $bb^{-1}a \cdot y = bb^{-1} \cdot (a \cdot y) = a \cdot y$.
Thus $a \cdot y = b \cdot y$.
It follows that $\alpha$ is a well-defined function mapping $x$ to $y$.
It remains to show that $\alpha$ is a morphism.
Suppose that $s \cdot x'$ is defined.
By assumption, there exists $a \in S$ such that $x' = a \cdot x$.
By definition $\alpha (x') = a \cdot y$.
We have that $ s \cdot x' = s \cdot (a \cdot x) = sa \cdot x$.
By definition $\alpha (s \cdot x') = sa \cdot y$.
But $sa \cdot y = s \cdot (a \cdot y) = s \cdot \alpha (x')$.
Hence $\alpha (s \cdot x') = s \cdot \alpha (x')$, as required.

(ii) We begin by proving uniqueness.
Let $\alpha,\beta \colon \: X \rightarrow Y$ be strong morphisms such that $\alpha (x) = \beta (x) = y$.
Let $x' \in X$ be arbitrary.
By transitivity there exists $a \in S$ such that $x' = a \cdot x$.
By the definition of strong morphisms we have that
$\exists a \cdot \alpha (x)$ and $\exists a \cdot \beta (x)$
and that
$$\alpha (x') = \alpha (a \cdot x) = a \cdot \alpha (x)$$
and
$$\beta (x') = \beta (a \cdot x) = a \cdot \beta (x).$$
But by assumption $\alpha (x) = \beta (x) = y$ and so $\alpha (x') = \beta (x')$.
It follows that $\alpha = \beta$.

Next we prove existence.
Suppose that $S_{x} \subseteq S_{y}$ and $E(S_{x}) = E(S_{y})$.
We have to define a strong morphism $\alpha \colon \:X \rightarrow Y$ such that $\alpha (x) = y$.
We start by defining $\alpha (x) = y$.
Let $x' \in X$ be any point in $X$.
Then $x' = a \cdot x$ for some $a \in S$.
We need to show that $a \cdot y$ exists.
Since $a \cdot x$ exists we know that $a^{-1}a \cdot x$ exists and this is equal to $x$.
It follows that $a^{-1}a \in S_{x}$ and so $a^{-1}a \in S_{y}$, by assumption.
Thus $a^{-1}a \cdot y$ exists and is equal to $y$.
But from the existence of $a^{-1}a \cdot y$ we can deduce the existence of $a \cdot y$.
We therefore define $\alpha (x') = a \cdot y$.
We have to check that this is well-defined.
Suppose that $x' = a \cdot x = b \cdot x$.
Then $b^{-1}a \cdot x = x$ and so $b^{-1}a \in S_{x}$.
By assumption, $b^{-1}a \in S_{y}$ and so
$b^{-1}a \cdot y = y$.
Thus $bb^{-1}a \cdot y = b \cdot y$
and $bb^{-1}a \cdot y = bb^{-1} \cdot (a \cdot y) = a \cdot y$.
Thus $a \cdot y = b \cdot y$.
It follows that $\alpha$ is a well-defined function mapping $x$ to $y$.

It remains to show that $\alpha$ is a strong morphism.
Suppose that $s \cdot x'$ is defined.
By assumption, there exists $a \in S$ such that $x' = a \cdot x$.
By definition $\alpha (x') = a \cdot y$.
We have that $ s \cdot x' = s \cdot (a \cdot x) = sa \cdot x$.
By definition $\alpha (s \cdot x') = sa \cdot y$.
But $sa \cdot y = s \cdot (a \cdot y) = s \cdot \alpha (x')$.
Hence $\alpha (s \cdot x') = s \cdot \alpha (x')$.

Now suppose that $\alpha (x') = y'$ and $\exists s \cdot y'$.
We shall prove that $\exists s \cdot x'$.
Observe that $\exists s^{-1}s \cdot y'$ and that it is enough to prove that $\exists s^{-1}s \cdot x'$.
Let $x' = u \cdot x$, which exists since we are assuming that our action is transitive.
Then by what we proved above we have that $y' = u \cdot y$.
Observe that $u^{-1}(s^{-1}s)u \cdot y = y$ and so $u^{-1}(s^{-1}s)u \in E(S_{y})$.
It follows by our assumption that $u^{-1}(s^{-1}s)u \in E(S_{x})$
and so  $u^{-1}(s^{-1}s)u \cdot x = x$.
It readily follows that $\exists s^{-1}s \cdot x'$,
and so $\exists s \cdot x'$, as required.

We now prove the converse.
Let $\alpha \colon \: X \rightarrow Y$ be a strong morphism such that $\alpha (x) = y$.
Let $s \in S_{x}$.
Then $\exists s \cdot x$ and $s \cdot x = x$.
By the definition of strong morphism, it follows that $\exists s \cdot \alpha (x)$
and that $\alpha (s \cdot x) = s \cdot \alpha (x)$.
But $s \cdot x = x$ and so $\alpha (x) = s \cdot \alpha (x)$.
Hence $s \cdot y = y$.
We have therefore proved that $s \in S_{y}$, and so $S_{x} \subseteq S_{y}$.
Let $e \in E(S_{y})$.
Then $\exists e \cdot \alpha (x)$.
But $\alpha$ is a strong morphism and so $\exists e \cdot x$.
Clearly $e \in E(S_{x})$.
It follows that $E(S_{x}) = E(S_{y})$.\qed\\

The following result is adapted from Lemma~1.9 of Ruyle \cite{Ruyle} and will be useful to us later.

\begin{lemma} Let $F$ be a closed inverse subsemigroup of the semilattice of idempotents of the inverse subsemigroup $S$.
Define
$$\overline{F} = \{s \in S \colon \: s^{-1}Fs \subseteq F, sFs^{-1} \subseteq F \}.$$
Then $\overline{F}$ is a closed inverse subsemigroup of $S$ whose semilattice of idempotents is $F$.
Furthermore, if $T$ is any closed subsemigroup of $S$ with semilattice of idempotents $F$ then $T \subseteq \overline{F}$.
\end{lemma}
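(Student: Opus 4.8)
The plan is to verify four things in turn: that $\overline{F}$ is an inverse subsemigroup, that it is closed upwards, that $E(\overline{F}) = F$, and finally the maximality statement. The first is a routine conjugation computation. If $s \in \overline{F}$ then $(s^{-1})^{-1}Fs^{-1} = sFs^{-1} \subseteq F$ and $s^{-1}F(s^{-1})^{-1} = s^{-1}Fs \subseteq F$, so $s^{-1} \in \overline{F}$; and if $s,t \in \overline{F}$ then $(st)^{-1}F(st) = t^{-1}(s^{-1}Fs)t \subseteq t^{-1}Ft \subseteq F$, with the symmetric computation giving $stF(st)^{-1} \subseteq F$, so $st \in \overline{F}$. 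The inclusion $F \subseteq E(\overline{F})$ is equally quick: for $e,f \in F$ we have $efe = ef \in F$ since $F$ is a subsemilattice, whence $eFe \subseteq F$ and $e \in \overline{F}$, and $e$ is idempotent.

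The genuinely delicate points both rest on the fact that $F$ is \emph{upward} closed in the semilattice $E(S)$. For the closure of $\overline{F}$, I would take $s \in \overline{F}$ with $s \leq u$, fix $e \in F$, and use $s = u(s^{-1}s)$ to write $s^{-1}es = (s^{-1}s)(u^{-1}eu)(s^{-1}s)$, which by commutativity of idempotents collapses to $(u^{-1}eu)(s^{-1}s)$. Since $s \in \overline{F}$ this element lies in $F$, and as $(u^{-1}eu)(s^{-1}s) \leq u^{-1}eu$, upward closure of $F$ forces $u^{-1}eu \in F$; the symmetric calculation with $s = (ss^{-1})u$ handles $uFu^{-1}$, so $u \in \overline{F}$. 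The same idea gives $E(\overline{F}) \subseteq F$: if $e$ is an idempotent of $\overline{F}$, pick any $f \in F$ (here one uses that $F$ is nonempty), note $ef = efe \in eFe \subseteq F$, and observe $ef \leq e$, so upward closure yields $e \in F$. Together with the previous inclusion this gives $E(\overline{F}) = F$.

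For maximality, let $T$ be as in the statement with $E(T) = F$, and take $t \in T$ and $e \in F$. The key observation is that $t^{-1}et$ is idempotent: commuting $e$ with $tt^{-1}$ and using $tt^{-1}t = t$ gives $(t^{-1}et)^2 = t^{-1}e(tt^{-1})et = t^{-1}et$. Since $t, t^{-1}, e \in T$ this idempotent lies in $T$, hence in $E(T) = F$; the symmetric argument gives $tet^{-1} \in F$, so $t \in \overline{F}$ and $T \subseteq \overline{F}$. I expect the main obstacle to be organizing the two upward-closure arguments cleanly: the non-obvious move is to multiply a conjugate $u^{-1}eu$ by the domain idempotent $s^{-1}s$ so as to land back inside $F$ and then invoke that $F$ is closed upwards, rather than attempting to prove membership of $u^{-1}eu$ directly. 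The inverse-subsemigroup and $F \subseteq E(\overline{F})$ parts are mechanical by comparison.
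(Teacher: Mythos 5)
Your proposal is correct and follows essentially the same route as the paper's proof: the same conjugation computation for the subsemigroup property, the same use of $ef \leq e$ plus upward closure to pin down $E(\overline{F}) = F$, the same observation that conjugates of idempotents by elements of $T$ land in $E(T) = F$ for maximality, and an upward-closure argument that differs only cosmetically (you factor $s = u(s^{-1}s)$ and bound $s^{-1}es \leq u^{-1}eu$, where the paper factors $s = (ss^{-1})t$ and bounds $s^{-1}es \leq t^{-1}et$). No gaps.
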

\proof
Clearly the set $\overline{F}$ is closed under inverses.
Let $s,t \in \overline{F}$.
We calculate
$$(st)^{-1}F(st) = t^{-1}(s^{-1}Fs)t \subseteq t^{-1}Ft \subseteq F$$
and
$$(st)F(st)^{-1} = s(tFt^{-1})s^{-1} \subseteq sFs^{-1} \subseteq F.$$
Thus $st \in \overline{F}$.
It follows that $\overline{F}$ is an inverse subsemigroup of $S$.

Let $e \in \overline{F}$
and
$f \in F$.
Then by assumption $ef \in F$.
But $ef \leq e$ and $F$ is a closed inverse subsemigroup of the semilattice of idempotents
and so $e \in F$.
Thus $E(\overline{F}) = F$.

Let $s \leq t$ where $s \in \overline{F}$.
Then $s = ss^{-1}t = ft$.
Let $e \in F$.
Then
$$s^{-1}es = t^{-1}feft = t^{-1}eft \leq t^{-1}et.$$
Now $s^{-1}es,t^{-1}et$ are idempotents and $s^{-1}es \in F$
thus $t^{-1}et \in F$, because $F$ is a closed inverse subsemigroup of the semilattice of idempotents.
Similarly $tet^{-1} \in F$.
It follows that $t \in \overline{F}$ and so $\overline{F}$ is a closed inverse subsemigroup of $S$.

Finally, let $T$ be a closed inverse subsemigroup of $S$ such that $E(T) = F$.
Let $t \in T$.
Then for each $e \in F$ we have that
$t^{-1}et,tet^{-1} \in F$.
Thus $T \subseteq \overline{F}$.\qed\\

A closed inverse subsemigroup $T$ of $S$ will be said to be {\em fully closed} if $T = \overline{E(T)}$.
Closed inverse subsemigroups of the semilattice of idempotents of an inverse semigroup are called filters {\em in} $E(S)$.
Observe the emphasis on the word `in'.
A filter in $E(S)$ is said to be {\em principal} if it is of the form $e^{\uparrow}$.
We denote by $\mathcal{F}_{E(S)}$ the set of all closed inverse subsemigroups of $E(S)$
and call it the {\em filter space of the semilattice of idempotents of $S$}.
This filter space is a poset when we define $F \leq F'$ iff $F' \subseteq F$
so that, in particular, $e^{\uparrow} \leq f^{\uparrow}$ iff $e \leq f$.

Let $F$ be a filter in $E(S)$.
Then $F^{\uparrow}$ is a closed inverse subsemigroup containing $F$ and clearly the smallest such inverse subsemigroup.
On the other hand, by Lemma~2.10, $\overline{F}$ is the largest closed inverse subsemigroup
with semilattice of idempotents $F$.
We have therefore proved the following.

\begin{lemma} The semilattice of idempotents of any closed inverse subsemigroup $H$ of an inverse semigroup $S$ is a filter $F$ in $E(S)$
and $F^{\uparrow} \subseteq H \subseteq \overline{F}$.
Thus $F^{\uparrow}$ is the smallest closed inverse subsemigroup with semilattice of idempotents $F$ and $\overline{F}$ is the largest.
\end{lemma}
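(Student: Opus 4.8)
The plan is to reduce the statement to facts already in place, the only genuinely new point being that $F := E(H)$ is a filter in $E(S)$; everything else follows from Lemma~2.10 and the discussion immediately preceding the statement. First I would verify that $F$ is a closed inverse subsemigroup of the semilattice $E(S)$, i.e.\ a filter in $E(S)$. Since the idempotents of an inverse semigroup form a subsemilattice and $H$ is closed under multiplication, the product of any two elements of $F$ lies in $H$ and is again idempotent, so $F$ is a subsemilattice of $E(S)$; closure under inverses is automatic, each idempotent being self-inverse. For upward closure inside $E(S)$, suppose $e \in F$ and $f \in E(S)$ with $e \leq f$. Then $e \in H$, and because $H$ is closed (upwards) in $S$ we get $f \in H$; as $f$ is idempotent, $f \in E(H) = F$. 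Hence $F$ is a filter in $E(S)$.

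Next I would establish the two inclusions. For $F^{\uparrow} \subseteq H$: since $F = E(H) \subseteq H$, monotonicity of upward closure gives $F^{\uparrow} \subseteq H^{\uparrow} = H$, using that $H$ is closed. For $H \subseteq \overline{F}$: the subsemigroup $H$ is a closed inverse subsemigroup whose semilattice of idempotents is exactly $F$, so the final clause of Lemma~2.10 applies verbatim and yields $H \subseteq \overline{F}$.

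Finally, for the `smallest' and `largest' assertions, I would record that both $F^{\uparrow}$ and $\overline{F}$ are themselves closed inverse subsemigroups with semilattice of idempotents $F$: the former as noted in the paragraph preceding the statement, with $E(F^{\uparrow}) = F$ following because an idempotent lying above some element of $F$ must itself lie in the upward-closed set $F$; the latter by Lemma~2.10. Since every closed inverse subsemigroup $H$ with $E(H) = F$ satisfies $F^{\uparrow} \subseteq H \subseteq \overline{F}$ by the inclusions just proved, it follows that $F^{\uparrow}$ is the smallest and $\overline{F}$ the largest such subsemigroup.

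The argument is largely bookkeeping, and Lemma~2.10 together with the preliminary discussion has already done the substantive work. The one step that must be handled with care is the upward closure of $F$ within $E(S)$: it is essential that the dominating element $f$ be \emph{assumed} idempotent, so that applying the upward closure of $H$ lands $f$ back in $E(H)$ rather than merely in $H$. Beyond this, I anticipate no real obstacle.
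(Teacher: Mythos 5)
Your proposal is correct and follows essentially the same route as the paper, which derives the lemma from Lemma~2.10 together with the observation that $F^{\uparrow}$ is the smallest closed inverse subsemigroup containing a filter $F$. You merely make explicit the routine verifications (that $E(H)$ is a filter in $E(S)$ and that $E(F^{\uparrow}) = F$) which the paper leaves implicit in the paragraph preceding the statement.
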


\begin{proposition} Let $S$ be an inverse semigroup and let $G = S/\sigma$.
Then there is an inclusion-preserving bijection between the wide closed inverse subsemigroups of $S$ and the subgroups of $G$.
\end{proposition}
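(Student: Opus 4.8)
The plan is to realise the correspondence explicitly through the natural homomorphism $\sigma^{\natural} \colon S \rightarrow G$: send a wide closed inverse subsemigroup $H$ to its image $\sigma^{\natural}(H)$, and send a subgroup $K \leq G$ to its preimage $(\sigma^{\natural})^{-1}(K)$. The first fact I would record is that any two idempotents of $S$ are $\sigma$-related, since $ef \leq e,f$ forces $e \, \sigma \, f$; hence all of $E(S)$ is collapsed to the identity of $G$. This is exactly what forces preimages to be wide.

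Next I would verify that these two assignments have the asserted targets. For $H$ a wide closed inverse subsemigroup, $\sigma^{\natural}(H)$ is a subsemigroup of $G$ that is closed under inverses and contains the identity (the image of any idempotent of $H$), and a subsemigroup of a group with these two properties is a subgroup. Conversely, for $K \leq G$ the set $H' = (\sigma^{\natural})^{-1}(K)$ is an inverse subsemigroup, as the preimage of a subsemigroup that is closed under inverses; it is wide because $E(S)$ maps into the identity, which lies in $K$; and it is upwards closed, since if $s \in H'$ and $s \leq t$ then taking $c = s$ in the definition of $\sigma$ gives $s \, \sigma \, t$, whence $\sigma^{\natural}(t) = \sigma^{\natural}(s) \in K$ and $t \in H'$.

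Both assignments are visibly inclusion-preserving, and since $\sigma^{\natural}$ is surjective we get $\sigma^{\natural}((\sigma^{\natural})^{-1}(K)) = K$ for free. The step that genuinely uses the wideness hypothesis, and the one I expect to be the crux, is the reverse composite: I must show $(\sigma^{\natural})^{-1}(\sigma^{\natural}(H)) = H$ for wide closed inverse $H$. The inclusion $\supseteq$ is immediate. For $\subseteq$, suppose $\sigma^{\natural}(s) = \sigma^{\natural}(h)$ with $h \in H$; then $s \, \sigma \, h$, so there is $c$ with $c \leq s$ and $c \leq h$. From $c \leq h$ I write $c = h(c^{-1}c)$, and here wideness enters decisively: the idempotent $c^{-1}c$ belongs to $E(S) \subseteq H$, so $c$ is a product of two elements of $H$ and thus $c \in H$. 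Then $c \leq s$ together with the upward closure of $H$ yields $s \in H$. This completes the verification that the two maps are mutually inverse, inclusion-preserving bijections.
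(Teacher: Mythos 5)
Your proof is correct and follows essentially the same route as the paper: both establish the correspondence via image and preimage under $\sigma^{\natural}$, with the identical key mechanism that wideness makes a closed inverse subsemigroup downward closed (since $c \leq h$ gives $c = h(c^{-1}c) \in H$) while upward closure then recovers every $\sigma$-related element. The only difference is organizational --- you verify the two maps are mutually inverse, whereas the paper proves injectivity of the image map by directly comparing two wide closed inverse subsemigroups with the same image --- but the underlying argument is the same.
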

\proof Let $E(S) \subseteq T \subseteq S$ be a wide inverse subsemigroup.
Then the image of $T$ in $G$ is a subgroup since inverse subsemigroups map to inverse subsemigroups under homomorphisms.
Suppose $T$ and $T'$, where also $E(S) \subseteq T' \subseteq S$, have the same image in $G$.
Let $t \in T$.
Then $\sigma^{\natural}(t) = \sigma^{\natural}(t')$ for some $t' \in T'$.
Thus $a \leq t,t'$ from the definition of $\sigma$.
But both $T$ and $T'$ are order ideals of $S$ and so $a \in T \cap T'$.
Thus $a \leq t$ and $a \in T'$ and $T'$ is closed thus $t \in T'$.
We have shown that $T \subseteq T'$.
The reverse inclusion follows by symmetry.
If $H$ is a subgroup of $G$ then the full inverse image of $H$ under $\sigma^{\natural}$
is a wide inverse subsemigroup of $S$.
This defines an order-preserving map going in the opposite direction.
It is now clear that the result holds.
\qed\\

The following is a special case of Lemma~2.17 of \cite{Ruyle}.
We include it for interest since we shall not use it explicitly.

\begin{lemma} Let $F$ be a filter in $E(S)$ in the inverse semigroup $S$.
\begin{description}

\item[{\rm (i)}] The intersection of any family of closed inverse subsemigroups with common semilattice of idempotents $F$ is again
a closed inverse subsemigroup with semilattice of idempotents $F$.

\item[{\rm (ii)}] Given any family of closed inverse subsemigroups with common semilattice of idempotents $F$ there is a smallest closed
inverse subsemigroup with semilattice $F$ which contains them all.

\end{description}
\end{lemma}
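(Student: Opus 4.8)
The plan is to prove (i) directly by checking that the defining properties of a closed inverse subsemigroup pass to intersections, and then to deduce (ii) from (i) by intersecting over a suitable \emph{nonempty} collection of closed inverse subsemigroups, the nonemptiness being guaranteed by the largest element $\overline{F}$ supplied by Lemma~2.10.

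For (i), let $\{H_j\}_{j \in J}$ be a nonempty family of closed inverse subsemigroups of $S$ with $E(H_j) = F$ for every $j$, and put $H = \bigcap_{j \in J} H_j$. First I would note that $H \neq \emptyset$: since $E(H_j) = F$ we have $F \subseteq H_j$ for each $j$, whence $F \subseteq H$. The subsemigroup and inverse-closure properties are inherited elementwise, since if $s,t \in H$ then $s,t \in H_j$ for all $j$, so $st, s^{-1} \in H_j$ for all $j$, giving $st, s^{-1} \in H$. Upward closure is equally immediate: if $s \in H$ and $s \leq t$, then $s \in H_j$ for every $j$, and as each $H_j$ is closed we get $t \in H_j$ for all $j$, so $t \in H$. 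The only point that uses the hypothesis is the computation of the idempotents: an idempotent lies in $H$ iff it lies in every $H_j$, so
$$E(H) = \bigcap_{j \in J} E(H_j) = \bigcap_{j \in J} F = F,$$
as required.

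For (ii), given such a family $\{H_j\}_{j \in J}$, I would form the collection
$$\mathcal{K} = \{K : K \text{ is a closed inverse subsemigroup of } S,\ E(K) = F,\ \text{and } H_j \subseteq K \text{ for all } j\}.$$
The key step is to show $\mathcal{K} \neq \emptyset$, and here Lemma~2.10 does all the work: $\overline{F}$ is a closed inverse subsemigroup with $E(\overline{F}) = F$, and the final clause of that lemma gives $H_j \subseteq \overline{F}$ for every $j$, each $H_j$ being a closed inverse subsemigroup with semilattice of idempotents $F$. Hence $\overline{F} \in \mathcal{K}$.

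Finally I would set $K_0 = \bigcap_{K \in \mathcal{K}} K$. Every member of $\mathcal{K}$ has semilattice of idempotents $F$, so part~(i) applies and shows that $K_0$ is a closed inverse subsemigroup with $E(K_0) = F$. Since each $K \in \mathcal{K}$ contains every $H_j$, so does $K_0$; thus $K_0 \in \mathcal{K}$. Being contained in every member of $\mathcal{K}$, it is the least element, which is precisely the smallest closed inverse subsemigroup with semilattice of idempotents $F$ containing the whole family. I do not expect a genuine obstacle: the one substantive ingredient is the existence of the upper bound $\overline{F}$ from Lemma~2.10, which guarantees $\mathcal{K} \neq \emptyset$ and lets part~(i) be applied to $K_0$; everything else is routine verification.
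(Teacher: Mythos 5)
Your proof is correct, and there is nothing in the paper to compare it against: the paper states this lemma without proof, noting only that it is a special case of Lemma~2.17 of Ruyle's thesis and that it will not be used explicitly later. Your argument is the natural one and is complete. In (i) the only points beyond routine element-chasing are nonemptiness of the intersection and the computation of its idempotents, both of which you get from $F \subseteq H_j$ for all $j$ together with $E(H) = \bigcap_{j} E(H_j) = F$. In (ii) you correctly isolate the one substantive ingredient, Lemma~2.10: $\overline{F}$ is a closed inverse subsemigroup with $E(\overline{F}) = F$ that contains every closed inverse subsemigroup whose semilattice of idempotents is $F$, so your collection $\mathcal{K}$ is nonempty, and then part (i) makes $K_0 = \bigcap_{K \in \mathcal{K}} K$ a member of $\mathcal{K}$, hence its least element. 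An equivalent, more generative formulation of the same idea would be to take the upward closure of the inverse subsemigroup generated by $\bigcup_j H_j$; since this sits inside the upward-closed set $\overline{F}$, its idempotents are exactly $F$, and it is clearly the smallest such closed inverse subsemigroup containing the family. Two minor reading conventions you adopt are the right ones: the family in (i) should be taken nonempty (an empty intersection is not meaningful here), and were the family in (ii) empty, the answer would be $F^{\uparrow}$ by Lemma~2.11.
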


\subsection{Universal and fundamental transitive actions}

We shall now define two special classes of transitive actions that play a decisive role in this paper.
Let $S$ be an inverse semigroup and let $H$ be a closed inverse subsemigroup of $S$.
By Lemma~2.10, we have that
$$E(H)^{\uparrow} \subseteq H \subseteq \overline{E(H)}$$
where $E(H)$ is a filter in $E(S)$.
We shall use this observation as the basis of two definitions,
the first of which is by far the most important.
We shall say that a transitive $S$-space $X$ is {\em universal} if the stabilizer of a point of $X$ is the closure $F^{\uparrow}$ for some filter $F$ of $E(S)$,
and {\em fundamental} if the stabilizer of a point of $X$ is $\overline{F}$ for some filter $F$ in $E(S)$.
Both definitions are independent of the point chosen.

\begin{lemma} \mbox{}
\begin{enumerate}

\item A strong morphism between universal transitive actions is an equivalence.

\item Any strong morphism with domain a fundamental transitive action and codomain a transitive action is an equivalence.

\end{enumerate}
\end{lemma}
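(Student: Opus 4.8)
The plan is to reduce both parts to Theorem~2.9(ii), which converts the existence and uniqueness of strong morphisms into inclusions between stabilizers. Let $\alpha$ be the given strong morphism, fix a point $x$ in its domain, and set $y = \alpha (x)$. Since $\alpha$ is a strong morphism carrying $x$ to $y$, Theorem~2.9(ii) supplies at once the inclusion $S_{x} \subseteq S_{y}$ together with the equality $E(S_{x}) = E(S_{y})$; write $F = E(S_{x}) = E(S_{y})$ for this common filter in $E(S)$. The whole argument then turns on promoting $S_{x} \subseteq S_{y}$ to an equality, after which a formal inverse-morphism step finishes both cases uniformly.

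The crux is exactly this promotion, and it is where the two hypotheses are used. For part~(1), universality of $X$ and $Y$ gives $S_{x} = F^{\uparrow}$ and $S_{y} = G^{\uparrow}$ for filters $F,G$ in $E(S)$; but the semilattice of idempotents of a closure $F^{\uparrow}$ is $F$ itself (Lemma~2.11), so $E(S_{x}) = E(S_{y})$ forces $F = G$ and hence $S_{x} = F^{\uparrow} = G^{\uparrow} = S_{y}$. For part~(2), $X$ is fundamental, so $S_{x} = \overline{F}$, while $S_{y}$ is merely the stabilizer of a point of an arbitrary transitive action and so is \emph{some} closed inverse subsemigroup whose semilattice of idempotents is $E(S_{y}) = F$; since $\overline{F}$ is the \emph{largest} closed inverse subsemigroup with semilattice of idempotents $F$ (Lemma~2.10), we obtain $S_{y} \subseteq \overline{F} = S_{x}$, which together with $S_{x} \subseteq S_{y}$ again yields $S_{x} = S_{y}$.

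Once $S_{x} = S_{y}$ is established, both cases conclude identically. Because now $S_{y} \subseteq S_{x}$ and $E(S_{y}) = E(S_{x})$, Theorem~2.9(ii)---applied with the roles of $x$ and $y$ exchanged---produces a strong morphism $\beta \colon Y \rightarrow X$ with $\beta (y) = x$. Then $\beta \alpha$ and the identity are both strong morphisms $X \rightarrow X$ fixing $x$, so the uniqueness clause of Theorem~2.9(ii) gives $\beta \alpha = \mathrm{id}_{X}$; symmetrically $\alpha \beta = \mathrm{id}_{Y}$. Hence $\alpha$ is a bijective strong morphism, i.e. an equivalence.

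The only genuine obstacle is the passage from $S_{x} \subseteq S_{y}$ to $S_{x} = S_{y}$: everything hinges on the extremal descriptions $F^{\uparrow}$ (smallest) and $\overline{F}$ (largest), which pin the stabilizer down completely once the idempotent semilattice $F$ is fixed. I would take care over the bookkeeping that $E(F^{\uparrow}) = F$ in case~(1), and that in case~(2) the semilattice to which maximality of $\overline{F}$ applies is precisely $E(S_{y}) = F$; beyond that, the inverse-via-uniqueness step is purely formal and requires only Lemma~2.1 to know that $\beta \alpha$ and $\mathrm{id}_{X}$ are the strong morphisms to which uniqueness is applied.
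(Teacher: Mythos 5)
Your proof is correct and takes essentially the same route as the paper: both apply Theorem~2.9(ii) to obtain $S_{x} \subseteq S_{y}$ and $E(S_{x}) = E(S_{y})$, promote the inclusion to equality using the extremal characterizations of $F^{\uparrow}$ (for universal) and $\overline{F}$ (for fundamental) from Lemmas~2.10/2.11, and then conclude via Theorem~2.9(ii) applied in the reverse direction. The only difference is that you spell out the inverse-via-uniqueness argument ($\beta\alpha = \mathrm{id}_{X}$, $\alpha\beta = \mathrm{id}_{Y}$) that the paper leaves implicit.
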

\proof (1) Let $X$ and $Y$ be universal transitive spaces.
Let $\alpha \colon \: X \rightarrow Y$ be a strong morphism.
Choose $x \in X$.
Then $S_{x} \subseteq S_{\alpha (x)}$ and $E(S_{x}) = E(S_{\alpha (x)})$.
But the actions are universal and so all stabilizers are the full closures of their semilattices of idempotents.
Thus $S_{x} = S_{\alpha (x)}$ and so $\alpha$ is an equivalence by Theorem~2.9(ii).

(2)  Let $X$ and $Y$ be transitive spaces where $X$ is fundamental and let $\alpha \colon \: X \rightarrow Y$ be a strong morphism.
Choose $x \in X$ and let $y = \alpha (x)$.
Then $S_{x} \subseteq S_{y}$ and $E(S_{x}) = E(S_{y})$ by Theorem~2.9(ii).
But $S_{x}$ is fundamental and so $S_{x} = S_{y}$.
We may deduce from Theorem~2.9(ii) that there is a unique strong morphism from $Y$ to $X$ mapping $y$ to $x$.
It follows that $\alpha$ is an equivalence.\qed \\

If $\alpha \colon X \rightarrow Y$ is a strong morphism between two transitive $S$-spaces,
we shall say that $Y$ is {\em strongly covered} by $X$.
The importance of universal actions arises from the following result.

\begin{proposition} Let $S$ be an inverse semigroup.
\begin{enumerate}

\item Each transitive action of $S$ is strongly covered by a universal one.

\item Each transitive action of $S$ strongly covers a fundamental one.

\end{enumerate}
\end{proposition}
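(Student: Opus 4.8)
The plan is to realize both claims via the canonical coset actions together with the strong-morphism criterion of Theorem~2.9(ii). Fix a transitive $S$-space $X$ with base point $x$, and put $H = S_{x}$. By Theorem~2.6, $X$ is equivalent to the action of $S$ on $S/H$, under which the base point corresponds to the coset $H$ itself, whose stabilizer is again $H$. Set $F = E(H)$; this is a filter in $E(S)$, and Lemma~2.11 gives the chain $F^{\uparrow} \subseteq H \subseteq \overline{F}$.

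For part~(1), I would take the universal action to be the coset action on $S/F^{\uparrow}$, which is universal by definition since the stabilizer of its base point is $F^{\uparrow}$. To obtain a strong morphism $S/F^{\uparrow} \rightarrow S/H$ taking base point to base point I invoke Theorem~2.9(ii), whose hypotheses are $F^{\uparrow} \subseteq H$ and $E(F^{\uparrow}) = E(H)$. The containment is precisely Lemma~2.11. For the idempotent condition, the key point is that $E(F^{\uparrow}) = F$: if $e$ is an idempotent lying in $F^{\uparrow}$, then $f \leq e$ for some $f \in F$, and since $F$ is closed upwards in $E(S)$ we conclude $e \in F$. As $E(H) = F$ as well, Theorem~2.9(ii) applies, the desired strong morphism exists, and $X$ is strongly covered by the universal action $S/F^{\uparrow}$.

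For part~(2), I would take the fundamental action to be $S/\overline{F}$, which is fundamental by definition since the stabilizer of its base point is $\overline{F}$. Here I want a strong morphism in the reverse direction, $S/H \rightarrow S/\overline{F}$, so that $X$ strongly covers it. Applying Theorem~2.9(ii) once more, it suffices to check $H \subseteq \overline{F}$, which is Lemma~2.11, together with $E(H) = E(\overline{F})$; but Lemma~2.10 states exactly that the semilattice of idempotents of $\overline{F}$ is $F = E(H)$. Hence the strong morphism exists and $X$ strongly covers the fundamental action $S/\overline{F}$.

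Once the framework is in place the argument is essentially formal; the only step needing a moment's care is verifying the two equalities of idempotent sets, since it is the coincidence of the semilattices of idempotents---rather than mere containment of the subsemigroups---that separates strong morphisms from ordinary ones in Theorem~2.9 and that makes both halves of the proposition go through.
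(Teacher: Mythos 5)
Your proof is correct and follows essentially the same route as the paper's: both arguments hinge on the chain $F^{\uparrow} \subseteq S_{x} \subseteq \overline{F}$ (Lemmas 2.10/2.11) and invoke Theorem~2.9(ii) to produce the strong morphism from $S/F^{\uparrow}$ in part (1) and into $S/\overline{F}$ in part (2). The only cosmetic difference is that you first replace $X$ by its coset model $S/S_{x}$ via Theorem~2.6, whereas the paper applies Theorem~2.9(ii) directly to the given space; this changes nothing of substance.
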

\proof (1) Let $Y$ be an arbitrary transitive $S$-space.
Choose a point $y \in Y$.
Let $F = E(S_{y})$ and put $H = F^{\uparrow}$.
Then $E(H) = E(S_{y})$ and $H \subseteq S_{y}$.
Put $X = S/H$ and choose the point $x$ in $X$ to be the coset $H$.
Then there is a unique strong morphism $\alpha \colon \: X \rightarrow Y$ such that $\alpha (x) = y$
by Theorem~2.9(ii) which is surjective by Lemma~2.2(ii) and $X$ is a universal transitive space by construction.

(2) Let $Y$ be an arbitrary transitive $S$-space.
Choose a point $y \in Y$.
Let $F = E(S_{y})$ and put $H = \overline{F}$.
Thus by Lemma~2.10 we have that $S_{y} \subseteq H$ and $E(S_{y}) = E(H)$.
Put $X = S/H$ and choose the point $x$ in $X$ to be the coset $H$.
Then there is a unique strong morphism $\alpha \colon \: Y \rightarrow X$ such that $\alpha (y) = x$ by Theorem~2.9(ii)
which is surjective by Lemma~2.2(ii) and $X$ is a fundamental transitive space by construction.\qed \\

\begin{theorem} Let $X$ be a universal, transitive $S$-space and let $x$ be a point of $X$.
Put $S_{x} = F^{\uparrow}$, where $F$ is a filter in $E(S)$ and $G_{F} = \overline{F}/\sigma$.
Then there is an order-preserving bijection between the set of strong congruences on $X$ and the set of subgroups of $G_{F}$.
\end{theorem}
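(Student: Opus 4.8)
The plan is to translate the statement about strong congruences into one about stabilizers, and then into one about subgroups of $G_F$, assembling the results already proved. By Proposition~2.3 the strong congruences on $X$ are exactly the kernels of strong morphisms out of $X$, and the quotient map $\nu \colon X \to X/\!\sim$ realizes $X/\!\sim$ as a transitive $S$-space strongly covered by $X$. I would therefore first set up an assignment sending a strong congruence $\sim$ to the stabilizer $H_{\sim} := S_{[x]}$ of the base point $[x] = \nu(x)$ in $X/\!\sim$, which is described explicitly by $H_{\sim} = \{s : \exists\, s\cdot x,\ s\cdot x \sim x\}$. Since $\nu$ is a strong morphism with $\nu(x) = [x]$, Theorem~2.9(ii) gives $S_x \subseteq H_{\sim}$ and $E(S_x) = E(H_{\sim})$; as $X$ is universal we have $S_x = F^{\uparrow}$ and hence $E(H_{\sim}) = E(F^{\uparrow}) = F$. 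By Lemma~2.11 this forces $F^{\uparrow} \subseteq H_{\sim} \subseteq \overline{F}$.

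The central step is to show this assignment is a bijection onto the set $\mathcal{H}$ of closed inverse subsemigroups $H$ of $S$ with $F^{\uparrow} \subseteq H \subseteq \overline{F}$ (equivalently, with $E(H) = F$). For surjectivity, given such an $H$ I would form the transitive space $S/H$ with base point the coset $H$, whose stabilizer is $H$; since $S_x = F^{\uparrow} \subseteq H$ and $E(S_x) = F = E(H)$, Theorem~2.9(ii) produces a strong morphism $\alpha \colon X \to S/H$ with $\alpha(x) = H$, and I take $\sim \, = \ker\alpha$. That the two constructions are mutually inverse is exactly the content of the first isomorphism theorem: Proposition~2.3(iii) factors $\alpha$ through an injective strong morphism $X/\!\ker\alpha \to S/H$, which is surjective by Lemma~2.2(ii) and hence an equivalence, so $H_{\ker\alpha} = H$; conversely the uniqueness clause of Theorem~2.9(ii) shows that $\sim$ is determined by $H_{\sim}$. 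Here the hypothesis that $X$ is \emph{universal} is essential: it makes $S_x$ as small as possible, so that every $H$ in the full interval $[F^{\uparrow}, \overline{F}]$ is admissible as a stabilizer.

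Next I would identify $\mathcal{H}$ with the wide closed inverse subsemigroups of the inverse semigroup $\overline{F}$. Since $E(\overline{F}) = F$ by Lemma~2.10, the condition $E(H) = F$ is precisely wideness in $\overline{F}$; and because $\overline{F}$ is itself closed in $S$ (again Lemma~2.10), a subset of $\overline{F}$ is closed in $S$ if and only if it is closed in $\overline{F}$, so the two notions of closure coincide on $\mathcal{H}$. Applying Proposition~2.12 to the inverse semigroup $\overline{F}$, for which $\overline{F}/\sigma = G_F$, I obtain an inclusion-preserving bijection between $\mathcal{H}$ and the subgroups of $G_F$. Composing the two bijections yields the desired bijection between strong congruences on $X$ and subgroups of $G_F$.

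Finally, for order-preservation I would order strong congruences and subgroups by inclusion and check that each arrow in the chain is an order isomorphism. The only point needing argument is that $\sim_1 \,\subseteq\, \sim_2$ if and only if $H_{\sim_1} \subseteq H_{\sim_2}$: the forward direction is immediate from the explicit description of $H_{\sim}$, while the reverse uses Theorem~2.9(ii) to build a strong morphism $\gamma \colon X/\!\sim_1 \to X/\!\sim_2$ fixing base points, after which the uniqueness clause gives $\gamma\nu_1 = \nu_2$ and hence $\sim_1 \subseteq \sim_2$. Proposition~2.12 is inclusion-preserving in both directions, so the composite is an order isomorphism. I expect the main obstacle to be the bijectivity in the central step --- keeping the first isomorphism theorem and the uniqueness of strong morphisms aligned so that the congruence-to-stabilizer dictionary is faithful in both directions; once that is secured, the identification with $\overline{F}$ and the appeal to Proposition~2.12 are routine.
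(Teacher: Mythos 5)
Your proposal is correct and follows essentially the same route as the paper: both reduce the problem via Proposition~2.12 (applied to $\overline{F}$) to a bijection between strong congruences on $X$ and closed inverse subsemigroups $H$ with $F^{\uparrow} \subseteq H \subseteq \overline{F}$, send a congruence $\sim$ to the stabilizer of the base class $[x]$, and obtain surjectivity from the strong morphism $X \rightarrow S/H$ supplied by Theorem~2.9(ii). The only divergence is in execution: where you use the uniqueness clause of Theorem~2.9(ii) together with Proposition~2.3(iii) and Lemma~2.2(ii) to get injectivity, order-preservation, and the fact that $\ker \alpha_{H}$ maps back to $H$, the paper proves injectivity by a direct element computation and leaves those last two points implicit, so your write-up is, if anything, slightly more complete.
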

\proof Put $G = G_{F}$.
By Proposition~2.12,
there is an order-preserving bijection between the closed inverse subsemigroups $H$ such that
$F^{\uparrow} \subseteq H \subseteq \overline{F}$ and the subgroups of $G$.
Thus we need to show that there is a bijection between the set of strong congruences on $X$
and the set of closed wide inverse subsemigroups of $\overline{F}$.
Observe that we use the fact that strong morphisms between transitive spaces are surjective by Lemma~2.2(ii).

Let $\sim$ be a strong congruence defined on $X$.
Then by Proposition~2.3 it determines a strong morphism $\nu \colon \: X \rightarrow X/\sim$.
For $x$ given in the statement of the theorem,
we have that the stabilizer of $[x]$, the $\sim$-class containing $x$,
is a closed inverse subsemigroup $H_{x}$ such that $F^{\uparrow} \subseteq H_{x} \subseteq \overline{F}$ by Theorem~2.9(ii).
We have thus defined a function from strong congruences on $X$ to the set of closed wide inverse subsemigroups of $\overline{F}$.

Suppose that $\sim_{1}$ and $\sim_{2}$ are two strong congruences on $X$ that map to the same closed wide inverse subsemigroup.
Denote the $\sim_{i}$ equivalence class containing $x$ by $[x]_{i}$ and let $\nu_{i} \colon X \rightarrow X/\sim_{i}$ be the natural map.
Let $x \in X$.
Then the stabilizer of $[x]_{1}$ and the stabilizer of $[x]_{2}$ are the same: namely $H$.
Suppose that $x \sim_{1} y$.
Thus $[x]_{1} = [y]_{1}$.
Since $X$ is an universal transitive $S$-space there is $b \in B$ such that $b \cdot x = y$.
It follows that $b \cdot [x]_{1} = [y]_{1} = [x]_{1}$ and so $b \in H$.
By assumption $b \cdot [x]_{2} = [x]_{2}$.
But $\sim_{2}$ is a strong congruence and so $y = b \cdot x \sim_{2} x$ and so $x \sim_{2} y$.
A symmetrical argument shows that $\sim_{1}$ and $\sim_{2}$ are equal.
Thus the correspondence we have defined is injective.
We now show that it is surjective.

Let $F^{\uparrow} \subseteq H \subseteq \overline{F}$ be such a closed wide inverse subsemigroup.
Then $Y = H/S$ is a transitive $S$-space.
Choose the point $y = H \in Y$.
Then by Theorem~2.9(ii) there is a unique strong morphism $\alpha_{H} \colon \: X \rightarrow Y$ such that $\alpha (x) = y$.
The kernel of $\alpha_{H}$, which we denote by $\sim_{H}$, is a strong congruence defined on $X$ by Proposition~2.3,
and the kernel of $\alpha_{H}$ maps to $H$.\qed \\

Observe that the above theorem requires a chosen point in $X$.

\subsection{A topological interpretation}

Let $S$ be an inverse semigroup and $X$ an $S$-space.
Define an $S$-labeled graph $G(X)$ whose vertices are X and whose edges go from $x$ to $sx$,
where $x \in X, s \in S$ and $sx$ is defined,
with label $s$ on this edge in this case.
There is an obvious involution on the graph by inversion, so this is a graph {\em in the sense of Serre}.
Observe that the directed graph $G(X)$ is connected iff $X$ is transitive.
From now on we shall deal only with transtive actions and so our graphs will be connected.

The {\em star} of a vertex $x$ in $G(X)$ is the set of all edges that start at $x$.
Now let $G$ and $H$ be arbitrary graphs.
A morphism $f$ from $G$ to $H$ is called an {\em immersion}
if it induces an injection from the star set of $x$ to that of $f(x)$ for each vertex $x$ of $G$.
The morphism $f$ is called a {\em cover} if it induces a bijection between such star sets.
The following is the key link between the algebraic and the topological interpretations of inverse semigroup actions.

\begin{lemma} Let $S$ be an inverse semigroup and let $X$ and $Y$ be transitive $S$-spaces.
There is a morphism from $X$ to $Y$ iff there is a label preserving immersion from $G(X)$ to $G(Y)$,
and there is a strong morphism from $X$ to $Y$ iff there is a label preserving cover from $G(X)$ to $G(Y)$.
\end{lemma}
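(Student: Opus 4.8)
The plan is to reduce both biconditionals to a single dictionary between label-preserving graph morphisms $G(X) \to G(Y)$ and morphisms of $S$-spaces $X \to Y$, after which the immersion/cover distinction falls straight out of the definitions. The starting observation is that the edges of $G(X)$ issuing from a vertex $x$ are parametrized by the set $\{s \in S \colon \exists s \cdot x\}$, the edge $e_{x,s}$ labelled $s$ running from $x$ to $s \cdot x$; and that for each vertex $y$ of $G(Y)$ and each $s \in S$ there is at most one edge out of $y$ labelled $s$ (when $\exists s \cdot y$, it runs to $s \cdot y$). First I would use this to argue that a label-preserving graph morphism is completely determined by its underlying vertex map $\alpha$: label preservation forces $e_{x,s}$ to be sent to the $s$-labelled edge at $\alpha(x)$, which exists precisely when $\exists s \cdot \alpha(x)$ and in that case terminates at $s \cdot \alpha(x)$. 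Matching sources and targets then reads off exactly as the condition that $\exists s \cdot x$ implies $\exists s \cdot \alpha(x)$ together with $\alpha(s \cdot x) = s \cdot \alpha(x)$; that is, a vertex map $\alpha \colon X \to Y$ extends to a label-preserving graph morphism if and only if it is a morphism of $S$-spaces.

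Next I would observe that every label-preserving graph morphism is automatically an immersion. Indeed, the map it induces from the star of $x$ to the star of $\alpha(x)$ sends the $s$-labelled edge to the $s$-labelled edge, so in the parametrization above it is simply the inclusion $\{s \colon \exists s \cdot x\} \hookrightarrow \{s \colon \exists s \cdot \alpha(x)\}$, which is injective because distinct edges of a star carry distinct labels. Combined with the dictionary of the first paragraph this yields the first biconditional: a morphism $X \to Y$ gives a label-preserving graph morphism, hence in particular an immersion, while conversely a label-preserving immersion is a fortiori a label-preserving graph morphism and hence an $S$-space morphism.

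For the second biconditional I would note that the induced star map is a bijection --- that is, the graph morphism is a cover --- exactly when $\{s \colon \exists s \cdot x\} = \{s \colon \exists s \cdot \alpha(x)\}$ for every vertex $x$, which says $\exists s \cdot x \Leftrightarrow \exists s \cdot \alpha(x)$. Together with the identity $\alpha(s \cdot x) = s \cdot \alpha(x)$ already in force, this is precisely the definition of a strong morphism, so label-preserving covers correspond exactly to strong morphisms.

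The hard part, such as it is, will be the bookkeeping of the edge parametrization in the sense-of-Serre setting: I must check that the star of $x$ is genuinely in bijection with $\{s \colon \exists s \cdot x\}$ --- noting that inverse edges starting at $x$ are themselves of the form $e_{x,s}$ and so are already counted, including the degenerate loop case $s \cdot x = x$ --- and that the induced edge map respects the involution, which holds because $\alpha(s \cdot x) = s \cdot \alpha(x)$ forces the image of $\overline{e}$ to equal $\overline{f(e)}$. Once this bijection and the uniqueness of an $s$-labelled edge at each vertex are in hand, both equivalences are a direct transcription of the definitions of morphism and strong morphism.
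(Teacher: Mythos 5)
Your proposal is correct and follows essentially the same route as the paper: send the $s$-labelled edge at $x$ to the $s$-labelled edge at $\alpha(x)$, note that label-preservation makes this automatically injective on stars (hence an immersion), and observe that bijectivity on stars is exactly the extra existence condition $\exists s\cdot x \Leftrightarrow \exists s\cdot\alpha(x)$ defining a strong morphism. In fact your write-up is more complete than the paper's, which dismisses the converse directions as ``straightforward to prove''; your explicit dictionary between vertex maps and label-preserving graph morphisms, and the check that the edge map respects the involution, are precisely the details being elided there.
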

\proof Let $\alpha \colon X \rightarrow Y$ be a morphism of transitive $S$-spaces.
Consider the directed edge $x \stackrel{s}{\rightarrow} y$ in the graph $G(X)$.
Then $s \cdot x = y$.
Since $\alpha$ is a morphism, we have that $\alpha (s \cdot x) = s \cdot \alpha (x) = \alpha (y)$.
We may therefore define $f \colon G(X) \rightarrow G(Y)$ by mapping the edge
$x \stackrel{s}{\rightarrow} y$ to the edge $\alpha (x) \stackrel{s}{\rightarrow} \alpha (y)$.
It is immediate that this is an immersion.
The fact that immersions arise from morphisms is now straightforward to prove.
Finally, suppose that $\alpha$ is a strong morphism.
Let $\alpha (x) \stackrel{s}{\rightarrow} \alpha (y)$ be an edge.
This means that $s \cdot \alpha (x) = \alpha (y)$.
But $\alpha$ is a strong morphism and so $s \cdot x$ exists and $\alpha (s \cdot x) = s \cdot \alpha (x)$.
It follows that the graph map is a cover.\qed \\

For a more complete account of the connection between immersions, inverse monoids and inverse categories see \cite{Margmeak,S1}.


\section{The \'etale groupoid associated with an inverse semigroup}\setcounter{theorem}{0}

In Section~2, we investigated the relationship between transitive actions of an inverse semigroup and closed inverse subsemigroups.
We found that the universal transitive actions played a special role.
We shall show in this section how these universal transitive actions, via their stabilizers,
lead to the inverse semigroup introduced by Lenz and thence to Paterson's \'etale groupoid.

\subsection{The inverse semigroup of cosets $\mathcal{K}(S)$}

We begin by reviewing a construction studied by a number of authors \cite{Schein2, Leech, Law1, Law2}.
A subset $A \subseteq S$ of an inverse semigroup is called an {\em atlas} if $A = AA^{-1}A$.
A closed atlas is precisely a coset of a closed inverse subsemigroup of $S$ \cite{Law1}.
We shall therefore refer to a closed atlas as a {\em coset}.
Observe that the intersection of cosets, if non-empty, is a coset.
The set of cosets of $S$ is denoted by $\mathcal{K}(S)$.
There is a product on $\mathcal{K}(S)$, denoted by $\otimes$,
and defined as follows: if $A,B \in \mathcal{K}(S)$ then $A \otimes B$ is the intersection of all cosets of $S$ that contain the set $AB$.
More explicitly if $X = (aH)^{\uparrow}$, where $a^{-1}a \in H$, and $Y = (bK)^{\uparrow}$, where $b^{-1}b \in K$,
then $X \otimes Y = (ab \langle b^{-1}Hb, K \rangle)^{\uparrow}$ where $\langle C, D \rangle$ is the inverse subsemigroup of $S$ generated by $C \cup D$.
In fact, $\mathcal{K}(S)$ is an inverse semigroup called the {\em (full) coset semigroup of} $S$.
Note that its natural partial order is reverse inclusion.
Thus $S$ is the zero element of $\mathcal{K}(S)$.
The idempotents of $\mathcal{K}(S)$ are just the closed inverse subsemigroups of $S$.

There is an embedding $\iota \colon \: S \rightarrow \mathcal{K}(S)$ that maps $s$ to $s^{\uparrow}$.
Observe now that if $A \in \mathcal{K}(S)$ then for each $s \in A$ we have that $s^{\uparrow} \subseteq A$ and so $A \leq s^{\uparrow}$.
It follows readily from this that $A$ is in fact the meet of the set $\{s^{\uparrow} \colon \: s \in A \}$.
More generally, every non-empty subset of $\mathcal{K}(S)$ has a meet and so the inverse semigroup $\mathcal{K}(S)$ is {\em meet complete}.
The map $\iota \colon \: S \rightarrow \mathcal{K}(S)$ is universal for maps to meet complete inverse semigroups.
Thus the inverse semigroup $\mathcal{K}(S)$ is the {\em meet completion} of the inverse semigroup $S$ \cite{Leech}.
It is worth noting that the category of meet complete inverse semigroups and their morphisms is not a full
subcategory of the category of inverse semigroups and their homomorphisms and
so the meet completion of $\mathcal{K}(S)$ is $\mathcal{K}(\mathcal{K}(S))$ and not just $\mathcal{K}(S)$.

At this point, we want to highlight a class of transitive actions that will play an important role both here and in Section~4.

\begin{remark}
{\em Let $T$ be an inverse semigroup and let $e$ be any idempotent in $T$.
We denote by $L_{e}$ the $\mathcal{L}$-class containing $e$.
The set $L_{e}$ therefore consists of all elements $t \in T$ such that $\dom (t) = e$.
Define a partial function from $T \times L_{e}$ to $L_{e}$ by $\exists a \cdot x$ iff $\dom (ax) = e$.
This defines a transitive action of $T$ on $L_{e}$ called the {\em (left) Sch\"utzenberger action determined by the idempotent $e$}.
This is the transitive action determined by the closed inverse subsemigroup $e^{\uparrow}$.}
\end{remark}

The structure of $\mathcal{K}(S)$ is inextricably linked to the structure of transitive actions of $S$.
The following was first stated in \cite{Law1}.

\begin{proposition} Let $S$ be an inverse semigroup.
Every transitive representation of $S$ is the restriction of a Sch\"utzenberger representation of $\mathcal{K}(S)$.
\end{proposition}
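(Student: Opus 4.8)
The plan is to reduce an arbitrary transitive representation to the canonical form $S/H$ and then exhibit that action as the restriction along $\iota$ of a single Sch\"utzenberger action of $\mathcal{K}(S)$. By Theorem~2.6 any transitive $S$-space is equivalent to $S/H$, where $H = S_{x}$ is the closed inverse stabilizer of a chosen point, so it suffices to treat the action of $S$ on $S/H$ for an arbitrary closed inverse subsemigroup $H$ of $S$. Since $H$ is an idempotent of $\mathcal{K}(S)$, it determines a Sch\"utzenberger action of $\mathcal{K}(S)$ on the $\mathcal{L}$-class $L_{H} = \{A \in \mathcal{K}(S) \colon \dom (A) = H\}$, and the whole argument amounts to checking that restricting this action along $\iota \colon S \rightarrow \mathcal{K}(S)$, $s \mapsto s^{\uparrow}$, reproduces the action on $S/H$.

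First I would identify the underlying sets. Every element $A$ of $\mathcal{K}(S)$ is a left coset $(aK)^{\uparrow}$ of the closed inverse subsemigroup $K = \dom (A)$, so $A \in L_{H}$ precisely when $A$ is a left coset of $H$; that is, $L_{H} = S/H$ as sets. To confirm that $\dom ((aK)^{\uparrow}) = K$ I would compute $A^{-1} \otimes A$ directly: writing $A^{-1} = (Ka^{-1})^{\uparrow}$, the set product $A^{-1}A$ lies in $K$ (every element dominates some $k_{1}^{-1}(a^{-1}a)k_{2} \in K$ and $K$ is upward closed), while its upward closure is all of $K$ (since $(a^{-1}a)k \leq k$ for $k \in K$), so the smallest coset containing $A^{-1}A$ is $K$ itself.

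Next I would match the partial actions under this identification. For $s \in S$ and $A = (tH)^{\uparrow}$ with $t^{-1}t \in H$, the product formula for $\otimes$ gives
$$s^{\uparrow} \otimes (tH)^{\uparrow} = \bigl( st \, \langle t^{-1}(s^{-1}s)^{\uparrow}t, \, H \rangle \bigr)^{\uparrow},$$
a left coset of $K = \langle t^{-1}(s^{-1}s)^{\uparrow}t, H \rangle$. In the Sch\"utzenberger action $\exists s^{\uparrow} \cdot A$ iff $\dom (s^{\uparrow} \otimes A) = H$, that is, iff $K = H$. The adjoined generators $t^{-1}(s^{-1}s)^{\uparrow}t$ are all idempotents (conjugates of idempotents are idempotent, since idempotents commute), and each lies above $t^{-1}s^{-1}st = \dom (st)$; because $H$ is upward closed, this set is contained in $H$ exactly when $\dom (st) \in H$, in which case $K = H$ and the product collapses to $(stH)^{\uparrow}$. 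Thus $\exists s^{\uparrow} \cdot A$ iff $\dom (st) \in H$, with value $(stH)^{\uparrow}$ --- which is precisely the definition of $s \cdot (tH)^{\uparrow}$ in $S/H$. Hence the restricted Sch\"utzenberger action and the action on $S/H$ coincide identically under the identification $L_{H} = S/H$, and the proposition follows.

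The main obstacle is this last step: controlling the inverse subsemigroup $\langle t^{-1}(s^{-1}s)^{\uparrow}t, H \rangle$ that appears in the product formula and showing it reduces to $H$ exactly under the stabilizer condition $\dom (st) \in H$. The key simplifications are that every adjoined generator is an idempotent lying above the single idempotent $\dom (st)$, and that $H$, being upward closed, absorbs any idempotent above one of its elements; these two facts are what force the generated subsemigroup to be either all of $H$ or strictly larger, matching the defined/undefined dichotomy of the coset action.
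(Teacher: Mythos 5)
Your proposal is correct and takes essentially the same route as the paper: identify the $\mathcal{L}$-class $L_{H}$ in $\mathcal{K}(S)$ with the set $S/H$ of left cosets of $H$, then check via the product formula that $s^{\uparrow} \otimes (tH)^{\uparrow}$ lies in $L_{H}$ exactly when $\dom (st) \in H$, in which case it equals $(stH)^{\uparrow}$; the paper's own proof is this same argument, stated more tersely. One false side remark should be deleted: under the paper's definition of ${}^{\uparrow}$ (upward closure in $S$), the set $(s^{-1}s)^{\uparrow}$ consists of \emph{all} elements of $S$ above $s^{-1}s$, not just idempotents, so the adjoined generators $t^{-1}(s^{-1}s)^{\uparrow}t$ need not be idempotents. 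This does not damage your proof, because the only properties you actually use are that every generator lies above $t^{-1}(s^{-1}s)t = \dom (st)$ (hence lies in $H$ whenever $\dom (st) \in H$, by upward closure of $H$) and that $\dom (st)$ is itself among the generators (giving the converse direction).
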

\proof Let $H$ be a closed inverse subsemigroup of $S$.
In the inverse semigroup $\mathcal{K}(S)$, the $\mathcal{L}$-class $L_{H}$ of the idempotent $H$
consists of all $A \in \mathcal{K}(S)$ such that $A^{-1} \otimes A = H$.
Let $a \in A$.
Then $A = (aH)^{\uparrow}$.
It follows that $L_{H}$ consists of precisely the left cosets of $H$ in $S$.
Let $A \in L_{H}$ and consider the product $s^{\uparrow} \otimes A$.
Then this again belongs to $L_{H}$ precisely when $(sa)^{-1}sa \in H$ and is equal to $(saH)^{\uparrow}$.
It follows that via the map $\iota$ the inverse semigroup acts on $L_{H}$ precisely as it acts on $S/H$.
\qed \\

If $H$ and $K$ are two idempotents of $\mathcal{K}(S)$ then they are $\mathcal{D}$-related iff there exists $A \in \mathcal{K}(S)$
such that $A^{-1} \otimes A = H$ and $A \otimes A^{-1} = K$ iff $H$ and $K$ are conjugate.
Thus the $\mathcal{D}$-classes of $\mathcal{K}(S)$ are in bijective correspondence with the conjugacy classes of closed inverse subsemigroups.

We may, in some sense, `globalize' the connection between $\mathcal{K}(S)$ and transitive actions of $S$.
Denote by  $\mathbf{O}(S)$ the category whose  objects are the {\em right} $S$-spaces $H/S$
and whose arrows are the (right) morphisms.
We now recall the following construction \cite{Law3'}.
Let $S$ be an inverse semigroup.
We can construct from $S$ a right cancellative category, denoted $\mathbf{R}(S)$,
whose elements are pairs $(s,e) \in S \times E(S)$ such that $\dom (s) \leq e$.
We regard $(s,e)$ as an arrow from $e$ to $\ran (s)$ and define a product by $(s,e)(t,f) = (st,e)$.

The following generalizes Example~2.2.3 of \cite{Law3'}.

\begin{proposition} The category $\mathbf{O}(S)$ is isomorphic to the category $\mathbf{R}(\mathcal{K}(S))$.
\end{proposition}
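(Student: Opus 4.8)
The plan is to build an isomorphism $\Phi \colon \mathbf{R}(\mathcal{K}(S)) \to \mathbf{O}(S)$ which is a bijection on objects and on each hom-set, and then to verify functoriality. On objects there is nothing to do: the objects of $\mathbf{R}(\mathcal{K}(S))$ are the idempotents of $\mathcal{K}(S)$, which are exactly the closed inverse subsemigroups $H$ of $S$, and these index precisely the right $S$-spaces $H/S$ that are the objects of $\mathbf{O}(S)$. So I put $\Phi(H) = H/S$, and the whole content of the proof is the comparison of arrows.

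First I would dualise the classical theory of Section~2 to right actions; in particular Theorem~2.9(i) becomes: between transitive right $S$-spaces $X$ and $Y$ with chosen points $x$ and $y$ there is a unique morphism $\alpha$ with $\alpha(x) = y$ if and only if $S_{x} \subseteq S_{y}$. I then record two facts about $\mathcal{K}(S)$, both being the right-handed form of the computation in the proof of Proposition~3.2. A point of the right $S$-space $L/S$ is a right coset of $L$, that is, precisely a coset $A \in \mathcal{K}(S)$ with $\ran (A) = A \otimes A^{-1} = L$ (an element of the $\mathcal{R}$-class of the idempotent $L$); moreover the stabiliser of this point under the right action is $\dom (A) = A^{-1} \otimes A$. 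Finally, the base point of $\mathbf{e}/S$ has stabiliser $\mathbf{e}$.

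These facts force the hom-set bijection. By the dual of Theorem~2.9(i) a morphism $\mathbf{e}/S \to L/S$ is determined by the image $A$ of the base point, and such an $A$ is an admissible choice exactly when $\mathbf{e} \subseteq \dom (A)$. Since the order on $\mathcal{K}(S)$ is reverse inclusion, $\mathbf{e} \subseteq \dom (A)$ is the condition $\dom (A) \leq \mathbf{e}$, which together with $\ran (A) = L$ says exactly that $(A,\mathbf{e})$ is an arrow from $\mathbf{e}$ to $L$ in $\mathbf{R}(\mathcal{K}(S))$. Hence I define $\Phi(A,\mathbf{e})$ to be the unique morphism $\mathbf{e}/S \to \ran(A)/S$ that carries the base point to $A$; by the above this is a bijection from the arrows of $\mathbf{R}(\mathcal{K}(S))$ onto the arrows of $\mathbf{O}(S)$, with inverse sending a morphism to the image of the base point, paired with its source object.

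It remains to prove that $\Phi$ is functorial, and this is where the real work lies. Identities cause no trouble: the identity at $\mathbf{e}$ is $(\mathbf{e},\mathbf{e})$, and $\Phi(\mathbf{e},\mathbf{e})$ fixes the base point and so equals $\mathrm{id}_{\mathbf{e}/S}$ by the uniqueness clause of Theorem~2.9(i). The main obstacle is compatibility with composition: I must show that the categorical product $(A,\mathbf{e})(B,\mathbf{f}) = (A \otimes B, \mathbf{e})$ is defined exactly when $\Phi(A,\mathbf{e})$ and $\Phi(B,\mathbf{f})$ are composable in $\mathbf{O}(S)$, and that it is then sent to their composite. Here the explicit product $X \otimes Y = (ab \langle b^{-1} H b, K \rangle)^{\uparrow}$ in $\mathcal{K}(S)$ must be used: I would track the base point of $\mathbf{e}/S$ through the composite morphism, compute its image as a coset, and identify that coset with $A \otimes B$. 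Care is needed in matching the source/target conditions of the category $\mathbf{R}(\mathcal{K}(S))$ with the $\mathcal{R}$- and $\mathcal{L}$-class bookkeeping of cosets in $\mathcal{K}(S)$, since $\ran$ interacts with $\otimes$ only through the defining constraint $\dom (A \otimes B) \leq \mathbf{e}$; once the composability conditions are seen to coincide, equality of the two resulting morphisms follows once more from the uniqueness in Theorem~2.9(i), both morphisms carrying the base point to the same coset.
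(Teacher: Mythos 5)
Your proposal is correct and takes essentially the same route as the paper's proof: morphisms out of a transitive right $S$-space are determined by the image of the base point (the dual of Theorem~2.9(i)), those images are precisely the cosets $A$ with $\ran (A)$ equal to the target object and $\dom (A) \leq$ the source object in the reverse-inclusion order, and this matches the arrows $(A,\mathbf{e})$ of $\mathbf{R}(\mathcal{K}(S))$. The composition step you leave as an outline is exactly what the paper does in one line --- tracking the base point via $\psi\phi (U) = \psi (V) \cdot a = (Wba)^{\uparrow}$ and invoking the identity $(Wb)^{\uparrow} \otimes (Va)^{\uparrow} = (Wba)^{\uparrow}$ --- so your plan fills in as intended.
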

\proof We observe first that a morphism with a transitive space as its domain is determined by its value on any element of that domain.
Let $\phi \colon \: U/S \rightarrow V/S$ be a morphism.
Then $\phi$ is determined by the value taken by $\phi (U) = (Va)^{\uparrow}$.
Now the stabilizer $S_{U}$ of $U$ is $U$ itself and the stabilizer $S_{(Va)^{\uparrow}}$ is $(a^{-1}Va)^{\uparrow}$.
Thus by Theorem~2.9, we have that $U \subseteq (a^{-1}Va)^{\uparrow}$.
Conversely, if we are given that $U \subseteq (a^{-1}Va)^{\uparrow}$
then we can define a morphism from $U/S$ to $V/S$ by $U \mapsto (Va)^{\uparrow}$.
There is therefore a bijection between morphisms from $U/S$ to $V/S$ and inclusions $U \subseteq (a^{-1}Va)^{\uparrow}$.
We shall encode the morphism $\phi$ by the triple $(V,(Va)^{\uparrow},U)$.
Let $\psi \colon \: V/S \rightarrow W/S$ be a morphism encoded by the triple $(W,(Wb)^{\uparrow},V)$.
The triple encoding $\psi \phi$ is of the form $(W,(Wc)^{\uparrow},U)$ where $\psi \phi (U) = (Wc)^{\uparrow}$.
Thus $(W,(Wb)^{\uparrow},V)(V,(Va)^{\uparrow},U) = (W,(Wba)^{\uparrow},U)$.
The product $(Wb)^{\uparrow} \otimes (Va)^{\uparrow}$ in $\mathcal{K}(S)$ is precisely $(Wba)^{\uparrow}$.
We now recall that the natural partial order in $\mathcal{K}(S)$ is reverse inclusion.
It follows that the triple $(V,(Va)^{\uparrow},U)$ can be identified with the pair
$((Va)^{\uparrow},U)$ where $\dom ((Va)^{\uparrow}) \leq U$.
We regard $((Va)^{\uparrow},U)$ as an arrow with domain $U$ and codomain $V$.
The result now follows.\qed \\

\subsection{The inverse semigroup of filters $\mathcal{L}(S)$}

We shall now describe an inverse subsemigroup of $\mathcal{K}(S)$.
A subset $A \subseteq S$ of an inverse semigroup $S$ is said to be {\em (down) directed} if it is non-empty and, for each $a,b \in A$,
there exists $c \in A$ such that $c \leq a, b$.
Closed directed sets in a poset are called \emph{filters}.
When this definition is applied to semilattices then we recover the definition given earlier.

\begin{lemma} The closed directed subsets are precisely the directed cosets.
\end{lemma}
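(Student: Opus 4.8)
The plan is to prove the two inclusions separately, observing that one direction is essentially definitional while the other rests on a single monotonicity computation.

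First I would dispose of the easy direction: every directed coset is a closed directed subset. By definition a coset is a \emph{closed} atlas, so in particular it is closed upwards, and a directed coset is directed by hypothesis. Hence a directed coset is automatically a closed directed subset, with nothing further to verify.

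For the converse, suppose $A$ is a closed directed subset; I must show $A$ is a coset, that is, a closed atlas, equivalently that $A = A^{\uparrow}$ and $A = AA^{-1}A$. Closure holds by hypothesis, and the inclusion $A \subseteq AA^{-1}A$ is immediate, since any $a \in A$ satisfies $a = aa^{-1}a \in AA^{-1}A$. The substance is the reverse inclusion $AA^{-1}A \subseteq A$. Given $a,b,c \in A$, I would use directedness --- iterated once, to pass from the two-element condition in the definition to a common lower bound of the three elements --- to find $d \in A$ with $d \leq a,b,c$. Since the natural partial order on an inverse semigroup is compatible with both multiplication and inversion, from $d \leq a$, $d^{-1} \leq b^{-1}$ and $d \leq c$ I obtain $d = dd^{-1}d \leq ab^{-1}c$. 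As $d \in A$ and $A$ is upward closed, this forces $ab^{-1}c \in A$, giving $AA^{-1}A \subseteq A$. Thus $A$ is a closed atlas, i.e.\ a coset, and it is directed by hypothesis, so $A$ is a directed coset.

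The only step with any content is the monotonicity inequality $d \leq ab^{-1}c$; everything else is bookkeeping, and I do not anticipate a genuine obstacle, since compatibility of the natural partial order with products and inverses is standard inverse-semigroup theory (see \cite{Law2}). The one point requiring mild care is the promotion of directedness from pairs to the common lower bound of the triple $a,b,c$, which is handled by applying the defining property twice.
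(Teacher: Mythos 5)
Your proof is correct and follows essentially the same route as the paper: the easy inclusion is definitional, and for the converse you find a common lower bound $d \leq a,b,c$ by directedness and use $d = dd^{-1}d \leq ab^{-1}c$ together with upward closure. The only difference is that you make explicit the iteration of directedness from pairs to triples, which the paper leaves implicit.
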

\proof A directed coset is certainly a closed directed subset.
Let $A$ be a closed directed subset.
We prove that it is an atlas.
Clearly $A \subseteq AA^{-1}A$.
Thus we need only check that $AA^{-1}A \subseteq A$.
Let $a,b,c \in A$.
Then since $A$ is directed there is $d \in A$ such that $d \leq a,b,c$.
Thus $d = dd^{-1}d  \leq ab^{-1}c$ and so $ab^{-1}c \in A$ since $A$ is also closed.\qed \\

\begin{lemma} A closed inverse subsemigroup $T$ of an inverse semigroup $S$ is directed if and only if
there is a filter $F \subseteq E(S)$ such that $T = F^{\uparrow}$.
\end{lemma}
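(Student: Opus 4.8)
The plan is to prove the two implications separately, taking $F = E(T)$ as the canonical candidate filter in the forward direction, and leaning throughout on the fact that $E(S)$ is a semilattice, so that any product of idempotents is again idempotent.

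The easy direction is $(\Leftarrow)$. Suppose $T = F^{\uparrow}$ for some filter $F$ in $E(S)$. To check directedness, I would take $s,t \in T = F^{\uparrow}$, so there are $e,f \in F$ with $e \leq s$ and $f \leq t$. Since $F$ is itself directed, there is $g \in F$ with $g \leq e,f$, and transitivity of the natural partial order then gives $g \leq s,t$ with $g \in F \subseteq F^{\uparrow} = T$. As $F$ is nonempty, so is $T$, and directedness follows.

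For $(\Rightarrow)$, suppose $T$ is a directed closed inverse subsemigroup and set $F = E(T)$. By Lemma~2.11, $F$ is a filter in $E(S)$ and $F^{\uparrow} \subseteq T$, so the entire content is the reverse inclusion $T \subseteq F^{\uparrow}$; equivalently, every $t \in T$ must dominate some idempotent of $T$. First I would note that the idempotent $t^{-1}t$ also lies in $T$, and then apply directedness to the pair $t, t^{-1}t$ to obtain $c \in T$ with $c \leq t$ and $c \leq t^{-1}t$. The key point is that such a $c$ is automatically idempotent: from $c \leq t^{-1}t$ one writes $c = h\,t^{-1}t$ for some $h \in E(S)$, and since $E(S)$ is a semilattice this product of idempotents is again idempotent. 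Hence $c \in E(T) = F$ with $c \leq t$, so $t \in F^{\uparrow}$, which closes the inclusion and yields $T = F^{\uparrow}$.

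The one step deserving real care --- and the place where the directedness hypothesis genuinely earns its keep --- is the observation that a lower bound of $t^{-1}t$ must be idempotent; once this is in hand, everything else is a routine manipulation of the natural partial order. I would stress this because, for a general closed inverse subsemigroup, Lemma~2.11 only guarantees $F^{\uparrow} \subseteq T \subseteq \overline{F}$ with both inclusions potentially strict, so it is precisely directedness that pins $T$ down to the smallest subsemigroup $F^{\uparrow}$ having $F$ as its semilattice of idempotents.
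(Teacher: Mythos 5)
Your proof is correct and follows essentially the same route as the paper's: the backward direction uses that a filter in $E(S)$ supplies common idempotent lower bounds, and the forward direction applies directedness to the pair $t, t^{-1}t$ and observes that any element below an idempotent is idempotent. The only difference is cosmetic --- you cite Lemma~2.11 to get that $E(T)$ is a filter with $E(T)^{\uparrow} \subseteq T$, whereas the paper reproves this in situ; both are fine.
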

\proof Suppose that $T = F^{\uparrow}$.
Let $a,b \in T$.
Then $e \leq a$ and $f \leq b$ for some $e,f \in F$.
But $F$ is a filter in the semilattice of idempotents and so closed under multiplication.
Thus $ef \in F$.
But then $ef \leq a,b$ and so $T$ is directed.

Let $T$ be a closed directed inverse subsemigroup.
Put $F = E(S)$.
Let $e,f \in F$.
Now $T$ is directed and so there is $i \in T$ such that $i \leq e,f$.
Thus $i$ is an idempotent.
But $i \leq ef \leq e,f$ and so, since $F$ is closed, we have that $ef \in F$.
It follows that $F$ is a filter in $E(S)$.
Clearly $F^{\uparrow} \subseteq T$.
Let $t \in T$.
Then $t^{-1}t \in T$ since $T$ is an inverse subsemigroup.
But $T$ is directed so there exists $j \leq t,t^{-1}t$.
But then $j$ is an idempotent and  so $j \leq t$ gives that $t \in F^{\uparrow}$.
Hence $T \subseteq F^{\uparrow}$.
Thus $T = F^{\uparrow}$, as required.\qed \\

\begin{lemma} If $A$ and $B$ are both directed cosets then $(AB)^{\uparrow}$ is the smallest directed coset containing $AB$;
it is also the smallest coset containing $AB$.
\end{lemma}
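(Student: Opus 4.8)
The plan is to prove the stronger of the two assertions — that $(AB)^{\uparrow}$ is the smallest coset containing $AB$ — from which the statement about directed cosets follows at once, since every directed coset is in particular a coset. Concretely, I would first check that $(AB)^{\uparrow}$ is itself a directed coset, and then observe that it is forced to lie inside any coset containing $AB$.

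For the first part, the set $(AB)^{\uparrow}$ is closed by construction (an upward closure is closed because $\leq$ is transitive), so by Lemma~3.4 it suffices to show that it is directed. Take $s_{1}, s_{2} \in (AB)^{\uparrow}$, say $a_{1}b_{1} \leq s_{1}$ and $a_{2}b_{2} \leq s_{2}$ with $a_{i} \in A$ and $b_{i} \in B$. Since $A$ and $B$ are directed, pick $a \in A$ with $a \leq a_{1}, a_{2}$ and $b \in B$ with $b \leq b_{1}, b_{2}$. The decisive ingredient is that the natural partial order on an inverse semigroup is compatible with multiplication, so that $ab \leq a_{i}b_{i} \leq s_{i}$ for $i = 1,2$. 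As $ab \in AB \subseteq (AB)^{\uparrow}$, this exhibits a common lower bound of $s_{1}$ and $s_{2}$ lying in $(AB)^{\uparrow}$, and directedness follows. Trivially $AB \subseteq (AB)^{\uparrow}$, since $ab \leq ab$ for every $ab \in AB$.

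For minimality I would use that every coset is, by definition, a closed set. If $C$ is any coset with $AB \subseteq C$, then $C = C^{\uparrow}$ and hence $(AB)^{\uparrow} \subseteq C^{\uparrow} = C$. Thus $(AB)^{\uparrow}$ is contained in every coset containing $AB$, and since it is itself a directed coset it is simultaneously the smallest coset and the smallest directed coset containing $AB$. The only step carrying genuine content is the directedness argument, and the sole external fact it relies on is the compatibility of the natural partial order with products, a standard property of inverse semigroups (see \cite{Law2}); everything else is formal.
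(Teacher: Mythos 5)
Your proof is correct and follows essentially the same route as the paper's: show $(AB)^{\uparrow}$ is closed and directed (via directedness of $A$ and $B$ together with compatibility of the natural partial order with multiplication), then get minimality from the fact that any coset containing $AB$ is closed. The only difference is presentational — you verify directedness for arbitrary elements of $(AB)^{\uparrow}$, whereas the paper checks it on $AB$ and leaves the passage to the up-closure implicit.
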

\proof The set $(AB)^{\uparrow}$ is closed so we need only show it is directed.
Let $ab,a'b' \in AB$.
Then there exists $c \leq a,a'$ where $c \in A$ and $d \leq b,b'$ where $d \in B$.
It follows that $cd \in AB$ and $cd \leq ab,a'b'$.
Thus the set is directed.

Now let $X$ be any coset containing $AB$.
Then $X$ is closed and so $(AB)^{\uparrow} \subseteq X$.\qed \\

The subset of $\mathcal{K}(S)$ consisting of directed cosets is denoted by $\mathcal{L}(S)$.

\begin{proposition} Let $S$ be an inverse semigroup.
\begin{description}
\item[{\rm (i)}] $\mathcal{L}(S)$ is an inverse subsemigroup of $\mathcal{K}(S)$.
\item[{\rm (ii)}] The directed cosets of $S$ are precisely the cosets of the closed directed inverse subsemigroups of $S$.
\item[{\rm (iii)}] Each element of $\mathcal{K}(S)$ is the meet of a subset of $\mathcal{L}(S)$ contained in an $\mathcal{H}$-class of $\mathcal{L}(S)$.
\end{description}
\end{proposition}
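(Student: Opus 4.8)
The plan is to handle the three parts in order, leaning on the three lemmas immediately preceding the proposition and letting part~(i) feed into parts~(ii) and~(iii); throughout I use that the natural order on $\mathcal{K}(S)$ is reverse inclusion, so that the meet of a family of cosets is the smallest coset containing their union.

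For (i), I would first observe that for directed cosets $A,B$ the product $A\otimes B$, being by definition the intersection of all cosets containing $AB$ (equivalently the smallest such coset), coincides with $(AB)^{\uparrow}$, and that this is directed; both facts are exactly the content of the preceding lemma on products. Hence $\mathcal{L}(S)$ is closed under $\otimes$. Closure under inversion is a one-line order argument: the inverse of a coset $A$ in $\mathcal{K}(S)$ is the setwise inverse $A^{-1}=\{a^{-1}:a\in A\}$, and if $a^{-1},b^{-1}\in A^{-1}$ with a witness $c\le a,b$ in $A$, then $c^{-1}\le a^{-1},b^{-1}$ lies in $A^{-1}$, so $A^{-1}$ is directed. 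Thus $\mathcal{L}(S)$ is an inverse subsemigroup.

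For (ii), I would use that every coset $A$ is a coset of its own stabilizer idempotent $H=\dom(A)=A^{-1}\otimes A$, with $A=(aH)^{\uparrow}$ for any $a\in A$ and $a^{-1}a\in H$. Both implications ride on part~(i). If $A$ is directed then so is $A^{-1}$, whence $H=A^{-1}\otimes A$ is directed by~(i). Conversely, if $H$ is directed, I would verify from the order that $a^{\uparrow}\otimes H=(aH)^{\uparrow}=A$ (using $a\in a^{\uparrow}$ and $s\ge a\Rightarrow sh\ge ah$), exhibiting $A$ as a product of the directed cosets $a^{\uparrow}$ and $H$, hence directed by~(i). Combined with the lemma characterising directed closed inverse subsemigroups as the $F^{\uparrow}$, this is precisely the asserted statement.

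Part~(iii) is where the real work lies. Fix $A\in\mathcal{K}(S)$, write $A=(aH)^{\uparrow}$ with $H=\dom(A)$ and $F=E(H)$ a filter in $E(S)$. My candidate family is
\[
\mathcal{B}=\{(ahF^{\uparrow})^{\uparrow}:h\in H\}\subseteq\mathcal{L}(S),
\]
each member being a coset of the directed subsemigroup $F^{\uparrow}$, hence in $\mathcal{L}(S)$ by~(ii). Each member is contained in $A$ (since $hF^{\uparrow}\subseteq H$), while $\bigcup\mathcal{B}\supseteq\{ah:h\in H\}=aH$; as the smallest coset containing $aH$ is $A$, the meet $\bigwedge\mathcal{B}$ — the smallest coset containing $\bigcup\mathcal{B}$ — is forced to equal $A$. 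What remains, and what I expect to be the main obstacle, is to show that $\mathcal{B}$ lies in a single $\mathcal{H}$-class: every member has domain $F^{\uparrow}$ automatically, so the delicate point is constancy of the range. Here I would invoke $F^{\uparrow}\subseteq H\subseteq\overline{F}$ from Lemma~2.10: every $h\in H$ lies in $\overline{F}$, so $hFh^{-1}\subseteq F$ and $h^{-1}Fh\subseteq F$, and (using $hh^{-1},h^{-1}h\in F$) these inclusions upgrade to the conjugation-invariance $(hF^{\uparrow}h^{-1})^{\uparrow}=F^{\uparrow}$. Consequently $\ran\bigl((ahF^{\uparrow})^{\uparrow}\bigr)=(ahF^{\uparrow}h^{-1}a^{-1})^{\uparrow}=(aF^{\uparrow}a^{-1})^{\uparrow}$ independently of $h$, so $\mathcal{B}$ sits in the single $\mathcal{H}$-class with domain $F^{\uparrow}$ and range $(aF^{\uparrow}a^{-1})^{\uparrow}$, completing the proof. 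I anticipate that establishing $(hF^{\uparrow}h^{-1})^{\uparrow}=F^{\uparrow}$ — rather than the meet computation — is the crux, since it is exactly where the fully closed envelope $\overline{F}$ is needed.
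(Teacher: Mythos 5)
Your proofs of parts (i) and (ii) coincide with the paper's: (i) is Lemma~3.6 plus the immediate closure under inversion, and (ii) is the factorization $A=(aH)^{\uparrow}=a^{\uparrow}\otimes H$ with $H=A^{-1}\otimes A$, so that $A$ is directed iff $H$ is. For part (iii), however, you take a genuinely different route, and your argument is correct. The paper's primary proof is intrinsic: it defines an equivalence relation on the set $A$ by $a\sim b$ iff some $c\in A$ satisfies $c\leq a,b$ (transitivity uses that the meet $z=xy^{-1}y$ of two elements of $A$ lying below a common element stays in $A$ because $A$ is an atlas), takes the blocks $A_{i}$ as the directed cosets whose meet is $A$, and then identifies $A_{i}^{-1}\otimes A_{i}=F^{\uparrow}$ and $A_{i}\otimes A_{i}^{-1}=(aF^{\uparrow}a^{-1})^{\uparrow}$ by a two-sided containment argument, the inclusions $F^{\uparrow}\subseteq A_{i}^{-1}\otimes A_{i}$ and $(aF^{\uparrow}a^{-1})^{\uparrow}\subseteq A_{i}\otimes A_{i}^{-1}$ being checked elementwise. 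You instead exhibit the pieces explicitly as the translates $(ahF^{\uparrow})^{\uparrow}$, $h\in H$, of left cosets of $K=F^{\uparrow}$; this is exactly the ``alternative way of proving this result'' that the paper mentions in one closing sentence but never carries out, and in fact the two decompositions coincide, since each block $A_{i}$ is itself a left coset of $K$ and distinct left cosets are disjoint by Lemma~2.5(ii). What your route buys: the meet computation becomes a two-line order argument, and the $\mathcal{H}$-class claim reduces to the single conjugation identity $(hF^{\uparrow}h^{-1})^{\uparrow}=F^{\uparrow}$ for $h\in H$, which you correctly flag as the crux and correctly source from $H\subseteq\overline{F}$ (Lemma~2.10): one inclusion follows from $hF^{\uparrow}h^{-1}\subseteq F^{\uparrow}$, the other from $f\geq hh^{-1}fhh^{-1}=h(h^{-1}fh)h^{-1}\in hFh^{-1}$ for $f\in F$. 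What the paper's route buys: it never needs $\overline{F}$ or any conjugation lemma in part (iii), and the partition is defined without reference to a chosen representative $a$. Two details you should make explicit to be complete: first, $(ahF^{\uparrow})^{\uparrow}$ is a legitimate coset of $F^{\uparrow}$ because $\dom(ah)=h^{-1}(a^{-1}a)h\in E(H)=F$; second, the range formula $\ran\bigl((ahF^{\uparrow})^{\uparrow}\bigr)=(ahF^{\uparrow}h^{-1}a^{-1})^{\uparrow}$ is not automatic but follows from Lemma~3.6 (for a directed coset $B=(sK)^{\uparrow}$ one has $B\otimes B^{-1}=(BB^{-1})^{\uparrow}=(sKs^{-1})^{\uparrow}$), together with the easy observation that $(aX^{\uparrow}a^{-1})^{\uparrow}=(aXa^{-1})^{\uparrow}$ for any subset $X$. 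Both are routine, so your proof stands.
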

\proof (i) If $A,B \in \mathcal{K}(S)$ then their product is the intersection of all cosets containing $AB$.
But if $A,B \in \mathcal{L}(S)$ then by Lemma~3.6 this intersection will also belong to $\mathcal{L}(S)$.
Closure under inverses is immediate.
Thus $\mathcal{L}(S)$ is an inverse subsemigroup of $\mathcal{K}(S)$.

(ii) If $A \in \mathcal{K}(S)$ then $A = (aH)^{\uparrow} = (a)^{\uparrow} \otimes H$ where $H = A^{-1} \otimes A$ and $a \in A$.
Thus $A$ is directed if and only if $H$ is directed.

(iii) Let $A \in \mathcal{K}(S)$ be a coset. Define a relation $\sim$ on the set $A$ by $a \sim b$ iff there exists $c \in A$
such that $c \leq a,b$. We show that $\sim$ is an equivalence relation on $A$.
Clearly $\sim$ is reflexive and symmetric.
It only remains to prove that it is transitive.
Let $a \sim b$ and $b \sim c$.
Then there exists $x \leq a,b$ and $y \leq b, c$ where $x,y \in A$.
In particular, $x,y \leq b$.
Thus $z = xy^{-1}y = yx^{-1}x$ is the meet of $x$ and $y$.
Since $A$ is a coset $xy^{-1}y, yx^{-1}x \in A$.
It follows that $z \leq a,c$.
Denote the blocks of the partition induced by $\sim$ on $A$ by $A_{i}$ where $i \in I$.
Each block is directed by construction and easily seen to be closed.
It follows that each block is a directed coset and so $A_{i} \in \mathcal{L}(S)$.
We have therefore proved that $A = \bigwedge_{i \in I} A_{i}$.

It remains to show that $A_{i} \,\mathcal{H} \, A_{j}$.
To do this it is enough to compute $A_{i}^{-1} \otimes A_{i}$ and $A_{i} \otimes A_{i}^{-1}$
and observe that these idempotents do not depend on the suffix $i$.
We may write $A = (aH)^{\uparrow}$ for some closed inverse subsemigroup $H$ of $S$ and element $a$ such that $\mathbf{d}(a) \in H$.
Put $F = E(H)$ the semilattice of idempotents of $H$.
Put $K = F^{\uparrow}$ and $L = (aKa^{-1})^{\uparrow}$,
both closed directed inverse subsemigroups of $S$ and so elements of $\mathcal{L}(S)$.
We prove that $K = A_{i}^{-1} \otimes A_{i}$ and $L = A_{i} \otimes A_{i}^{-1}$.
From $A \leq A_{i}$ we have that $H = A^{-1} \otimes A \leq A_{i}^{-1} \otimes A_{i}$
and $(aHa^{-1})^{\uparrow} \leq A_{i} \otimes A_{i}^{-1}$.
By construction $H \leq K$ and $K$ is in fact the smallest idempotent of $\mathcal{L}(S)$ above $H$.
It follows that $K \leq  A_{i}^{-1} \otimes A_{i}$ and similarly $L \leq A_{i} \otimes A_{i}^{-1}$.
It remains to show that equality holds in each case which means checking that
$K \subseteq A_{i}^{-1} \otimes A_{i}$ and $L \subseteq A_{i} \otimes A_{i}^{-1}$.

Let $k \in K$ and $a_{i} \in A_{i}$.
Now $k \in K \subseteq H$ and $a_{i} \in A_{i} \subseteq A$.
Thus $a_{i}k \in A$.
But $a_{i}k \leq a_{i}$.
Now if $a_{i}k \in A_{j}$ then by closure $a_{i} \in A_{j}$ and so we must have that $a_{i}k \in A_{i}$.
Thus $ka_{i}^{-1}a_{i} \in A_{i}^{-1} \otimes A_{i}$ and so by closure $k \in A_{i}^{-1} \otimes A_{i}$, as required.

Let $l \in L$.
Let $a_{i} \in A_{i}$.
Then $A = (a_{i}H)^{\uparrow}$.
Thus $L = (a_{i}Ka_{i}^{-1})^{\uparrow}$.
It follows that $a_{i}^{-1}la_{i} \in K$
and so $a_{i}a_{i}^{-1}l \in a_{i}Ka_{i}^{-1}$
giving $l \in A_{i} \otimes A_{i}^{-1}$.

An alternative way of proving this result is to observe that $K$ is a closed inverse subsemigroup of $H$
and so $H$ can be written as a disjoint union of some of the left cosets of $K$.
We can then use this decomposition to write $A$ itself as a disjoint union of left cosets of $K$.\qed \\

We say that an inverse semigroup $S$ is {\em meet complete} if every non-empty subset of $S$ has a meet.
Meet completions of inverse semigroups are discussed at the end of Section~1.4 of \cite{Law2},
\cite{Law1} and most importantly in \cite{Leech}.
The meet completion of an inverse semigroup $S$ is in fact $\mathcal{K}(S)$ \cite{Leech}.

The inverse semigroup $S$ is said to have all {\em directed meets} if it has meets of all non-empty directed subsets.
The result below shows that $\mathcal{L}(S)$ is the {\em directed} meet completion of $S$
in the same way that $\mathcal{K}(S)$ is the meet completion.

\begin{proposition} Let $S$ be an inverse semigroup.
Then $\mathcal{L}(S)$ is the directed meet completion of $S$.
\end{proposition}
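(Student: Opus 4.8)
The plan is to establish the universal property that characterises the directed meet completion, mirroring the treatment of $\mathcal{K}(S)$ as the meet completion: namely that $\mathcal{L}(S)$ possesses all directed meets, that $\iota$ maps $S$ into $\mathcal{L}(S)$, and that $\iota \colon S \rightarrow \mathcal{L}(S)$ is universal among homomorphisms from $S$ into inverse semigroups having all directed meets. First I would check that $\mathcal{L}(S)$ has all directed meets. Since the order on $\mathcal{K}(S)$ is reverse inclusion, a family $\{A_{i}\}_{i \in I} \subseteq \mathcal{L}(S)$ is directed precisely when any two of its members lie in a common member. For such a family the union $\bigcup_{i} A_{i}$ is upwards closed, and it is down-directed: given $a \in A_{i}$ and $b \in A_{j}$, choosing $A_{k} \supseteq A_{i} \cup A_{j}$ and using that $A_{k}$ is a directed coset produces $c \in A_{k}$ with $c \leq a,b$. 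By Lemma~3.4 the union is therefore a directed coset, and being the smallest coset containing itself it is the meet of the family computed in $\mathcal{K}(S)$; as it lies in $\mathcal{L}(S)$, directed meets exist in $\mathcal{L}(S)$ and are computed as unions. Since each $s^{\uparrow}$ is a principal filter, $\iota$ does map into $\mathcal{L}(S)$.

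Next I would pin down the extension. Every $A \in \mathcal{L}(S)$ satisfies $A = \bigwedge_{s \in A} s^{\uparrow} = \bigwedge_{s \in A} \iota(s)$, and the family $\{\iota(s) \colon s \in A\}$ is directed because $A$ is down-directed ($c \leq s,t$ in $A$ gives $\iota(c) \leq \iota(s), \iota(t)$). Hence any directed-meet-preserving homomorphism extending $\theta$ must send $A$ to $\bigwedge_{s \in A} \theta(s)$, which forces uniqueness. For existence I would define $\Theta(A) = \bigwedge_{s \in A} \theta(s)$ for a homomorphism $\theta \colon S \rightarrow T$ with $T$ possessing all directed meets. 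This is well defined, since homomorphisms are order preserving, so $\{\theta(s) \colon s \in A\}$ is a directed subset of $T$ whose meet exists by hypothesis; and $\Theta\iota = \theta$ is immediate because $s$ is the least element of $s^{\uparrow}$.

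The substantive step, and the one I expect to be the main obstacle, is that $\Theta$ is a homomorphism, i.e.\ $\Theta(A \otimes B) = \Theta(A)\Theta(B)$. By Lemma~3.6 we have $A \otimes B = (AB)^{\uparrow}$, and since the elements of $AB$ are cofinal from below in $(AB)^{\uparrow}$ while $\theta$ is order preserving, $\Theta(A \otimes B) = \bigwedge_{a \in A,\, b \in B} \theta(a)\theta(b)$. It therefore suffices to prove the distributive law $\left(\bigwedge_{a} \theta(a)\right)\left(\bigwedge_{b} \theta(b)\right) = \bigwedge_{a,b} \theta(a)\theta(b)$ in $T$, which in turn reduces to the general fact that in any inverse semigroup multiplication preserves existing meets: if $x = \bigwedge_{i} x_{i}$ then $yx = \bigwedge_{i} yx_{i}$. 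The key here is order-theoretic: from $z \leq yx_{i}$ for all $i$ one gets $zz^{-1} \leq yy^{-1}$, whence $z = yy^{-1}z$, while $y^{-1}z \leq y^{-1}yx_{i} \leq x_{i}$ forces $y^{-1}z \leq x$ and hence $z = yy^{-1}z \leq yx$; so $yx$ is the greatest lower bound of $\{yx_{i}\}$. The symmetric argument handles right multiplication, and applying both (to the relevant families, which stay directed because multiplication is order preserving) yields the distributive law. Finally $\Theta$ preserves directed meets, since the lower bounds of $\{\theta(s) \colon s \in \bigcup_{i} A_{i}\}$ are exactly the common lower bounds of the $\Theta(A_{i})$, giving $\Theta(\bigwedge_{i} A_{i}) = \bigwedge_{i} \Theta(A_{i})$. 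This completes the verification that $\mathcal{L}(S)$ is the directed meet completion of $S$.
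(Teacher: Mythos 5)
Your proposal is correct and follows essentially the same route as the paper: embed $S$ via $\iota$, realise directed meets in $\mathcal{L}(S)$ as unions of directed families of cosets (via Lemma~3.4), and define the extension of $\theta$ by $A \mapsto \bigwedge \theta(A)$, with uniqueness forced by $A = \bigwedge_{s\in A} s^{\uparrow}$. The only difference is that you spell out the verification that the extension is a homomorphism --- via Lemma~3.6, cofinality of $AB$ in $(AB)^{\uparrow}$, and the fact that multiplication in an inverse semigroup distributes over existing meets --- a step the paper simply asserts; your argument for that step is sound.
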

\proof We have the embedding $\iota \colon \: S \rightarrow \mathcal{L}(S)$
and once again each $A \in \mathcal{L}(S)$ is the join of all the $s^{\uparrow}$ where $s \in A$.
This time the set over which we are calculating the meet is directed.
Let $\mathcal{A} = \{A_{i} \colon \: i \in I \}$ be a directed subset of $K(S)$.
Thus for each pair of cosets $A_{i}$ and $A_{j}$ there is a coset $A_{k}$ such that $A_{k} \leq A_{i}, A_{j}$.
Put $A = \bigcup_{i \in I} A_{i}$.
It is clearly a closed subset.
If $a,b \in A$ then $a \in A_{i}$ and $b \in A_{j}$ for some $i$ and $j$.
By assumption $A_{i},A_{j} \subseteq A_{k}$ for some $k$.
Thus $a,b \in A_{k}$.
But $A_{k}$ is a directed subset and so there exists $c \in A_{k}$ such that $c \leq a,b$.
It follows that $A$ is a closed and directed subset and so is a directed coset by Lemma~3.4.
 It is now immediate that $A$ is the meet of the set $\mathcal{A}$.
Let $\theta \colon \: S \rightarrow T$ be a homomorphism to an inverse semigroup $T$ which has all meets of directed subsets.
Define $\psi \colon \: \mathcal{K}(S) \rightarrow T$ by $\psi (A) = \bigwedge \theta (A)$.
Then $\psi$ is a homomorphism and the unique one such that $\psi \iota = \theta$.\qed \\

In \cite{Lenz}, Lenz constructs an inverse semigroup $\mathcal{O}(S)$ from an inverse semigroup $S$,
which is the basis for his \'etale groupoid associated with $S$.
The key result for our paper is the following.

\begin{theorem} The inverse semigroup $\mathcal{L}(S)$ is isomorphic to Lenz's semigroup $\mathcal{O}(S)$.
\end{theorem}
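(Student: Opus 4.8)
The plan is first to unwind Lenz's definition of $\mathcal{O}(S)$ and then to produce an explicit bijection $\Phi \colon \mathcal{O}(S) \to \mathcal{L}(S)$ which I will verify is a homomorphism of inverse semigroups. Since Lenz's construction is order-theoretic --- his elements are built from $S$ by identifying elements and adjoining order-limits --- the natural candidate for $\Phi$ is the assignment sending the class of a basic element $s \in S$ to the principal filter $s^{\uparrow} \in \mathcal{L}(S)$, and more generally sending a formal limit to the directed coset it names. The first task is therefore to check that Lenz's identifying relation on $S$ collapses \emph{precisely} to equality of the generated filters, so that $\Phi$ is well defined and injective; here the characterisation of directed closed inverse subsemigroups as the sets $F^{\uparrow}$ (Lemma~3.5) together with the description of directed cosets (Lemma~3.4) should handle the bookkeeping.

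For surjectivity I would argue that every directed coset $A \in \mathcal{L}(S)$ is named by data of the kind Lenz allows: writing $A = (aH)^{\uparrow}$ with $H$ a closed directed inverse subsemigroup (Proposition~3.7(ii)), and recalling that $A$ is the meet of the principal filters $\{s^{\uparrow} \colon s \in A\}$ taken over a \emph{directed} index set, one sees that $A$ is the image under $\Phi$ of the corresponding limit in $\mathcal{O}(S)$. The remaining, and genuinely substantive, step is to match the two multiplications: I must show that Lenz's product of two of his elements is carried by $\Phi$ to the coset product $\otimes$ on $\mathcal{L}(S)$. By Lemma~3.6 the latter is simply $(AB)^{\uparrow}$, the smallest directed coset containing the setwise product $AB$, so the verification reduces to checking that Lenz's order-based product computes this same closure.

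I expect this product comparison to be the main obstacle, because Lenz's multiplication is phrased through his relation rather than through closures of setwise products, and reconciling the two requires care with directedness and with the fact that the order on $\mathcal{K}(S)$, hence on $\mathcal{L}(S)$, is \emph{reverse} inclusion. A cleaner route, which I would adopt to organise this step, is to invoke the universal property just established in Proposition~3.8: rather than comparing products element by element, I would show that $\mathcal{O}(S)$ together with its canonical map from $S$ also solves the universal problem of the directed meet completion --- that is, that $S \to \mathcal{O}(S)$ is universal among homomorphisms into inverse semigroups possessing all directed meets --- and then conclude $\mathcal{O}(S) \cong \mathcal{L}(S)$ by uniqueness of that completion. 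This reduces the theorem to verifying that Lenz's semigroup has directed meets and the appropriate universal factorisation, which is exactly the feature his construction was designed to provide.
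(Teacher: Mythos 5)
Your first two paragraphs are, in substance, the paper's own proof: the paper realises $\mathcal{O}(S)$ as the set of directed subsets of $S$ modulo the preorder $A \prec B$ (for each $b \in B$ there is $a \in A$ with $a \leq b$), observes that $A \sim B$ iff $A^{\uparrow} = B^{\uparrow}$, so that $[A] \mapsto A^{\uparrow}$ is a well-defined bijection onto $\mathcal{L}(S)$, and then disposes of the multiplicative step exactly as you first suggest, by Lemma~3.6. Indeed, Lenz's product is (the class of) the setwise product, $[A][B] = [AB]$, and since the natural partial order is compatible with multiplication we have $AB \subseteq A^{\uparrow}B^{\uparrow} \subseteq (AB)^{\uparrow}$, whence $A^{\uparrow} \otimes B^{\uparrow} = (A^{\uparrow}B^{\uparrow})^{\uparrow} = (AB)^{\uparrow}$. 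So the step you single out as ``the main obstacle'' is a one-line consequence of Lemma~3.6, and no detour is needed.

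The alternative route you say you would actually adopt --- proving that $S \rightarrow \mathcal{O}(S)$ is itself a directed meet completion and invoking uniqueness of completions --- is legitimate in principle but, as described, has two gaps. First, Lenz does not establish (and it is not ``exactly the feature his construction was designed to provide'') that $\mathcal{O}(S)$ has directed meets together with the universal factorisation property; his aim was an order-theoretic reconstruction of Paterson's groupoid. Verifying those facts directly for $\sim$-classes of directed subsets is essentially the same bookkeeping as the direct comparison, and the cleanest way to carry it out is via the very identification with $\mathcal{L}(S)$ that you are trying to prove, so the reduction saves nothing. Second, ``uniqueness of the completion'' is only available in the category whose morphisms are homomorphisms preserving directed meets: the paper itself warns, when discussing $\mathcal{K}(S)$, that meet complete inverse semigroups and homomorphisms do not form a full subcategory (which is why $\mathcal{K}(\mathcal{K}(S)) \neq \mathcal{K}(S)$). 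With arbitrary homomorphisms the factorisation through the completion is not unique, and the standard argument that two objects with the same universal property are isomorphic breaks down. Both points are repairable, but the repaired argument is longer than the direct one you began with, which is the paper's proof.
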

\proof Let $\mathcal{F} = \mathcal{F}(S)$ denote the set of directed {\em subsets} of $S$.
For $A,B \in \mathcal{F}$ define $A \prec B$ iff for each $b \in B$ there exists $a \in A$ such that $a \leq b$.
This is a preorder.
The associated equivalence relation is given by $A \sim B$ iff $A \prec B$ and $B \prec A$.
We now make two key observations.
(1) $A \sim A^{\uparrow}$. It is easy to check that $A^{\uparrow}$ is directed.
By definition $A \prec A^{\uparrow}$, whereas $A^{\uparrow} \prec A$ is immediate.
(2) $A^{\uparrow} \sim B^{\uparrow}$ iff $A^{\uparrow} = B^{\uparrow}$.
There is only one direction needs proving.
Suppose that $A^{\uparrow} \sim B^{\uparrow}$.
Let $a \in A^{\uparrow}$.
Then $B^{\uparrow} \prec A^{\uparrow}$ and so there is $b \in B$ such that $b \leq a$.
But then $a \in B^{\uparrow}$.
Thus $A^{\uparrow} \subseteq B^{\uparrow}$.
The reverse inclusion is proved similarly.
By (1) and (2), it follows that $A \sim B$ iff $A^{\uparrow} = B^{\uparrow}$.
As a set, $\mathcal{O}(S) = \mathcal{F}(S)/\sim$.
We have therefore set up a bijection between $\mathcal{O}(S)$ and $\mathcal{L}(S)$.
Lemma~3.6 tells us that the multiplication defined in \cite{Lenz} in $\mathcal{O}(S)$
ensures that this bijection is an isomorphism.\qed \\

Denote by  $\mathbf{U}(S)$ the category whose  objects are the {\em right} $S$-spaces $H/S$
where $H$ is directed and whose arrows are the (right) morphisms.
We have the following analogue of Proposition~3.3.

\begin{proposition}
The category $\mathbf{U}(S)$ is isomorphic to the category $\mathbf{R}(\mathcal{L}(S))$.
\end{proposition}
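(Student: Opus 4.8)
The plan is to imitate the proof of Proposition~3.3 and simply verify that, once attention is restricted to directed objects, every coset and every stabilizer that occurs remains directed, so that the entire construction lands inside $\mathcal{L}(S)$ rather than $\mathcal{K}(S)$. In other words, I would show that the isomorphism $\mathbf{O}(S) \cong \mathbf{R}(\mathcal{K}(S))$ of Proposition~3.3 restricts to the desired isomorphism on the directed sub-data.

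First I would match the objects. The objects of $\mathbf{U}(S)$ are the right $S$-spaces $H/S$ with $H$ a closed directed inverse subsemigroup; by Lemma~3.5 these are exactly the $H = F^{\uparrow}$ with $F$ a filter in $E(S)$, which are precisely the idempotents of $\mathcal{L}(S)$. Since the objects of $\mathbf{R}(\mathcal{L}(S))$ are by definition the elements of $E(\mathcal{L}(S))$, the object bijection of Proposition~3.3 restricts to a bijection between the objects of the two categories.

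Next I would treat the arrows, following Proposition~3.3 essentially verbatim. A right morphism $\phi \colon U/S \to V/S$ between directed objects is determined by $\phi(U) = (Va)^{\uparrow}$, and by Theorem~2.9(i) such a morphism exists precisely when $U \subseteq (a^{-1}Va)^{\uparrow}$. The step needing verification is that this encoding stays inside $\mathcal{L}(S)$: since $V$ is directed, the coset $A = (Va)^{\uparrow}$ is a directed coset by Proposition~3.7(ii), hence $A \in \mathcal{L}(S)$; and because $\mathcal{L}(S)$ is an inverse subsemigroup of $\mathcal{K}(S)$ by Proposition~3.7(i), its domain $\dom(A) = (a^{-1}Va)^{\uparrow}$ and range $\ran(A) = V$ again lie in $\mathcal{L}(S)$ and so are directed. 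Thus $\phi$ is encoded, exactly as in Proposition~3.3, by the pair $(A,U)$ with $A \in \mathcal{L}(S)$ and $\dom(A) \leq U$ in the reverse-inclusion order, that is, by an arrow of $\mathbf{R}(\mathcal{L}(S))$ from $U$ to $\ran(A) = V$.

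Composition is inherited unchanged: the computation $(W,(Wb)^{\uparrow},V)(V,(Va)^{\uparrow},U) = (W,(Wba)^{\uparrow},U)$ of Proposition~3.3 matches the product $(Wb)^{\uparrow}\otimes(Va)^{\uparrow} = (Wba)^{\uparrow}$, and by Lemma~3.6 this product of directed cosets is again directed, so it is computed identically in $\mathcal{L}(S)$. The one place I expect to be the genuine obstacle, i.e.\ where directedness could in principle fail, is surjectivity on arrows: given an arbitrary arrow $(A,U)$ of $\mathbf{R}(\mathcal{L}(S))$ I must produce a morphism in $\mathbf{U}(S)$ realizing it, and for this its codomain object $\ran(A)$ must itself be a directed closed inverse subsemigroup. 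This is exactly where Proposition~3.7(i) is essential: because $\mathcal{L}(S)$ is closed under $\ran$, writing $A = (Va)^{\uparrow}$ with $V = \ran(A)$ yields a directed $V$, so $V/S$ is a legitimate object of $\mathbf{U}(S)$ and Theorem~2.9(i) supplies the required morphism $U/S \to V/S$. Once this is established, the bijection of Proposition~3.3 restricts to a functor that is bijective on objects and on hom-sets, which gives the claimed isomorphism.
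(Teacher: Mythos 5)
Your proposal is correct and is exactly the argument the paper intends: the paper states this proposition without proof, as the ``analogue of Proposition~3.3,'' and the intended justification is precisely the restriction you carry out --- checking via Lemma~3.5, Lemma~3.6 and Proposition~3.7 that all cosets, domains, ranges and products occurring in the proof of Proposition~3.3 remain directed, so the isomorphism $\mathbf{O}(S)\cong\mathbf{R}(\mathcal{K}(S))$ restricts to $\mathbf{U}(S)\cong\mathbf{R}(\mathcal{L}(S))$. Your identification of surjectivity on arrows as the one point genuinely needing the closure of $\mathcal{L}(S)$ under $\ran$ is exactly right.
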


\subsection{Paterson's \'etale groupoid}

Theorem~3.9 brings us to the beginning of Section~4 of Lenz's paper \cite{Lenz} where he describes Paterson's \'etale groupoid.
If $T$ is an inverse semigroup, then it becomes a groupoid when we define a partial binary operation $\cdot$, called the {\em restricted product},
by $\exists s \cdot t$ if and only if $\dom (s) = \ran (t)$ in which case $s \cdot t = st$.
Paterson's groupoid is precisely $(\mathcal{L}(S),\cdot)$ equipped with a suitable topology.
The isomorphism functor defined by Lenz from $\mathcal{L}(S)$ to Paterson's groupoid can be very easily described in terms of the ideas introduced in our paper.
Let $A \in \mathcal{L}(S)$.
Define $P = (AA^{-1})^{\uparrow}$.
Then for any $a \in A$ we have that $A = (Pa)^{\uparrow}$.
Thus we may regard $A$ as a {\em right coset} of the closed, directed inverse subsemigroup $P$.
By the dual of Lemma~2.5(i), we have that $(Pa)^{\uparrow} = (Pb)^{\uparrow}$,
where $aa^{-1},bb^{-1} \in P$, if and only if $ab^{-1} \in P$ if and only if $pa = pb$ for some $p \in P$,
where we use the fact that every element of $P$ is above an idempotent.
The ordered pair $(P,a)$ where $\ran (a) \in P$ determines the right coset $(Pa)^{\uparrow}$
and another such pair $(P,b)$ determines the same right coset if and only if $pa = pb$ for some $p \in P$.
This leads to an equivalence relation and we denote the equivalence class containing  $(P,a)$ by $[P,a]$.
The isomorphism functor between the Lenz groupoid $\mathcal{L}(S)$ and Paterson's groupoid is therefore defined by
$A \mapsto [(AA^{-1})^{\uparrow},a]$ where $a \in A$.
We see that Paterson has to work with equivalence classes because of the non-uniqueness of coset-respresentatives,
and Lenz has to work with equivalence classes because he works with generating sets of filters rather than with the filters themselves.
In our approach, the use of equivalence classes in both cases is avoided.

Recall from Section~2.2, that a transitive $S$-space $X$ is universal if the stabilizer $H$ of a point of $X$ is $F^{\uparrow}$ where $F$ is a filter in $E(S)$.
In other words, by Lemma~3.5 the closed inverse subsemigroup $H$ is directed.
It follows that the universal transitive actions of $S$ are determined by the directed filters that are also inverse subsemigroups.
We shall now describe how the structure of the groupoid $(\mathcal{L}(S),\cdot)$ reflects the properties of transitive actions of $S$.
In what follows, we can just as easily work in the inverse semigroup as in the groupoid.

\begin{proposition} Let $S$ be an inverse semigroup.
\begin{enumerate}

\item The connected components of the groupoid $\mathcal{L}(S)$ are in bijective correspondence with the equivalence
classes of universal transitive actions of $S$.

\item Let $H$ be an identity in $\mathcal{L}(S)$.
Then the local group $G_{H}$ at $H$ is isomorphic to the group $\overline{E(H)}/\sigma$.

\end{enumerate}
\end{proposition}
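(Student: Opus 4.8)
The plan is to handle the two parts separately, in each case translating the groupoid statement into the coset algebra of $\mathcal{L}(S)$ developed above.

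For part (1), I would first record that the identities of the groupoid $(\mathcal{L}(S),\cdot)$ are the idempotents of the inverse semigroup $\mathcal{L}(S)$, that is, the closed directed inverse subsemigroups of $S$, which by Lemma~3.5 are exactly the sets $F^{\uparrow}$ with $F$ a filter in $E(S)$. These are precisely the stabilizers of universal transitive actions, so by Proposition~2.7 the equivalence classes of universal transitive actions correspond bijectively to the conjugacy classes of directed closed inverse subsemigroups. On the other hand, the connected component of an identity $H$ consists of those identities $K$ admitting an arrow $A\colon H\to K$, i.e.\ a coset $A$ with $A^{-1}\otimes A=H$ and $A\otimes A^{-1}=K$; thus the components correspond to the $\mathcal{D}$-classes of idempotents of $\mathcal{L}(S)$.

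It then remains to see that two directed idempotents are $\mathcal{D}$-related in $\mathcal{L}(S)$ iff they are conjugate. By the observation closing Section~3.1, $\mathcal{D}$-relatedness in $\mathcal{K}(S)$ is conjugacy; and by Proposition~3.7(ii) any coset $A$ with $A^{-1}\otimes A$ directed is itself directed, so a coset conjugating one directed idempotent to another automatically lies in $\mathcal{L}(S)$. Hence the $\mathcal{D}$-classes of idempotents of $\mathcal{L}(S)$ are exactly the conjugacy classes of directed closed inverse subsemigroups, and combining this with the previous paragraph yields the bijection of part (1).

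For part (2), write $H=F^{\uparrow}$ with $F=E(H)$, so that $\overline{E(H)}=\overline{F}$, and let $G_{H}$ be the local group of $\mathcal{L}(S)$ at $H$, namely the directed cosets $A$ with $A^{-1}\otimes A=A\otimes A^{-1}=H$. I would define $\phi\colon\overline{F}\to G_{H}$ by $\phi(s)=(sH)^{\uparrow}$. Since $s\in\overline{F}$ forces $s^{-1}s,ss^{-1}\in F\subseteq H$, this is a genuine (directed) left coset of $H$, and using $sFs^{-1}\subseteq F$, $s^{-1}Fs\subseteq F$ together with Lemma~2.8 one checks $(sHs^{-1})^{\uparrow}=H$, so $\phi(s)\in G_{H}$. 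The product formula in $\mathcal{K}(S)$ gives $(sH)^{\uparrow}\otimes(tH)^{\uparrow}=(st\,\langle t^{-1}Ht,H\rangle)^{\uparrow}=(stH)^{\uparrow}=\phi(st)$, the middle equality because $t\in\overline{F}$ makes $t^{-1}Ht\subseteq H$; as the restricted and semigroup products agree inside $G_{H}$, this shows $\phi$ is a homomorphism. For surjectivity, pick $a\in A$ for $A\in G_{H}$; then $a^{-1}a,aa^{-1}\in H$, and since both $A$ and $A^{-1}$ lie in $G_{H}$ we obtain $(aHa^{-1})^{\uparrow}=H$ and $(a^{-1}Ha)^{\uparrow}=H$, whence, $F$ being upward closed in $E(S)$, $aFa^{-1}\subseteq F$ and $a^{-1}Fa\subseteq F$, so $a\in\overline{F}$ by the defining condition in Lemma~2.10 and $\phi(a)=A$. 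Finally, for the kernel, Lemma~2.5(i) gives $\phi(s)=\phi(t)$ iff $s^{-1}t\in H=F^{\uparrow}$, and this holds iff $s\,\sigma\,t$ in $\overline{F}$: if $c\le s,t$ with $c\in\overline{F}$ then $c^{-1}c\le s^{-1}t$ with $c^{-1}c\in F$, while if $e\le s^{-1}t$ with $e\in F$ then $c=se\in\overline{F}$ satisfies $c\le s,t$. Thus $\phi$ induces the desired isomorphism $\overline{F}/\sigma\cong G_{H}$. The main obstacle is exactly this last part: showing $\phi$ is well defined into $G_{H}$ and surjective, where the conjugation conditions $(aHa^{-1})^{\uparrow}=H=(a^{-1}Ha)^{\uparrow}$ must be converted into membership $a\in\overline{F}$ via Lemma~2.10, and where coset equality of representatives must be matched precisely with $\sigma$-equivalence.
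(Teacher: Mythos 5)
Your proof is correct and takes essentially the same route as the paper: part (1) rests on identifying connected components with conjugacy classes of closed directed inverse subsemigroups and invoking Proposition~2.7 (you additionally spell out, via the $\mathcal{D}$-class remark of Section~3.1 and Proposition~3.7(ii), why connectivity in $\mathcal{L}(S)$ coincides with conjugacy, a point the paper leaves implicit), and part (2) establishes the same coset/$\sigma$-class correspondence. The only cosmetic difference is the direction of the map in (2): you construct $\phi\colon \overline{F}\to G_{H}$, $s\mapsto (sH)^{\uparrow}$, and compute that its kernel is $\sigma$, whereas the paper defines the inverse map $\theta\colon G_{H}\to \overline{E(H)}/\sigma$, $A\mapsto \sigma(a)$ for $a\in A$; your well-definedness, surjectivity and kernel computations correspond exactly to the paper's surjectivity, well-definedness and injectivity arguments.
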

\proof
(1) The identities of $\mathcal{L}(S)$ are the closed directed inverse subsemigroups of $S$.
Two such identities belong to the same connected component if and only if they are conjugate.
The result now follows by Proposition~2.7.

(2) Put $F = E(H)$ so that $H = \overline{F}$.
Let $A$ be in the local group determined by $H$.
Then $H = (A^{-1}A)^{\uparrow} = (AA^{-1})^{\uparrow}$.
Define $\theta \colon G_{H} \rightarrow \overline{E(H)}/\sigma$ by $\theta (A) = \sigma (a)$ where $a \in A$.

We show first that this map is well-defined.
Let $f \in F$ and let $a \in A$.
Then $a^{-1}fa \in A^{-1}E(A)A \subseteq HA = A$ and so $a^{-1}fa \in F$
and
$afa^{-1} \in AE(A)A^{-1} \subseteq AH = A$ and so $afa^{-1} \in F$.
Thus $A \subseteq \overline{F}$.
Next suppose that $a,b \in A$.
Then there is an element $c \in A$ such that $c \leq a,b$.
Thus $\sigma (a) = \sigma (b)$.
It follows that $\theta$ is well-defined.

We now show that $\theta$ defines a bijection.
Suppose that $\theta (A) = \theta (B)$.
Then $a \sigma b$ where $a \in A$ and $b \in B$.
Thus there exists $c \in \overline{F}$ such that $c \leq a,b$.
It follows that $c = ac^{-1}c = bc^{-1}c$ and so $a^{-1}ac^{-1}c \leq a^{-1}b$.
But  $a^{-1}ac^{-1}c \in F$ and so $A = B$.
Thus $\theta$ is injective.
Let $a \in \overline{F}$.
Then $a^{-1}a, \in \overline{F}$ and so $a^{-1}a \in F$.
Thus $A = (aH)^{\uparrow}$ is a well-defined coset and
Then $(A^{-1}A)^{\uparrow} = H = (AA^{-1})^{\uparrow}$.
It follows that $A \in G_{H}$ and $\theta (A) = \sigma (a)$.
Thus $\theta$ is surjective.

Finally we show that $\theta$ defines a homomorphism.
Let $A,B \in G_{H}$ and $a \in A$ and $b \in B$.
By Lemma~3.6, $A \otimes B = (AB)^{\uparrow}$ and contains $ab$.
Thus
$\theta (A)\theta (B) = \sigma (a)\sigma(b) = \sigma (ab) = \theta (A \otimes B)$.
\qed \\

We now have the following theorem.

\begin{theorem} Let $S$ be an inverse semigroup.
Then $\mathcal{L}(S)$ explicitly encodes universal transitive actions of $S$ via its Sch\"utzenberger actions,
and implicitly encodes all transitive actions via its local groups.
\end{theorem}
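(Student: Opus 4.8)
The plan is to read Theorem~3.12 as a summary that synthesizes the machinery of Sections~2 and 3, so the proof amounts to identifying which earlier results supply each half of the statement and checking that they compose correctly.

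For the first assertion, I would begin from Lemma~3.5: a universal transitive action of $S$ has stabilizer $F^{\uparrow}$ for some filter $F$ in $E(S)$, and these directed closed inverse subsemigroups are exactly the idempotents of $\mathcal{L}(S)$. Fixing such an idempotent $H = F^{\uparrow}$, I would identify its $\mathcal{L}$-class inside $\mathcal{L}(S)$ by running the argument of Proposition~3.2 now within $\mathcal{L}(S)$ rather than $\mathcal{K}(S)$: the class $L_{H}$ consists precisely of the directed left cosets of $H$ in $S$, and since $H$ is directed all its left cosets are directed by Proposition~3.7(ii), so $L_{H}$ is the full underlying set $S/H$ of the universal action. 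The Sch\"utzenberger action of $\mathcal{L}(S)$ on $L_{H}$ from Remark~3.1 is $B \cdot A = B \otimes A$ whenever $\dom(B \otimes A) = H$; restricting along the embedding $\iota \colon S \to \mathcal{L}(S)$, one has $s \cdot (aH)^{\uparrow} = (saH)^{\uparrow}$ exactly when $\dom(sa) \in H$, which is the defining partial action of $S$ on $S/H$. This makes precise the sense in which each universal transitive action is literally the restriction to $S$ of a Sch\"utzenberger action of $\mathcal{L}(S)$, yielding the ``explicit'' encoding.

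For the second assertion, the point is that $\mathcal{L}(S)$ carries only the universal actions explicitly, yet its local groups recover the rest. I would invoke Proposition~2.15(1): every transitive $S$-space is strongly covered by a universal one, say by $X = S/F^{\uparrow}$. By Theorem~2.16 the strong congruences on $X$ are in order-preserving bijection with the subgroups of $G_{F} = \overline{F}/\sigma$, and by Proposition~2.3 each such congruence yields the quotient transitive action $X/\!\sim$ that is strongly covered; conversely Theorem~2.9(ii) together with Proposition~2.12 shows that these quotients exhaust, up to equivalence, the transitive actions covered by $X$. Finally Proposition~3.11(2) identifies $G_{F}$ with the local group $G_{H}$ of $\mathcal{L}(S)$ at the identity $H = F^{\uparrow}$. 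Chaining these identifications, every transitive action of $S$ is realized as a quotient of a universal one by a strong congruence, and that congruence is named by a subgroup of a local group of $\mathcal{L}(S)$ --- the ``implicit'' encoding.

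I expect the main obstacle to be the bookkeeping in the first part: one must verify carefully that the $\mathcal{L}$-class $L_{H}$ inside $\mathcal{L}(S)$ really coincides with $S/H$ as an $S$-set under $\iota$, checking both that every directed left coset lies in $L_{H}$ and that the restricted $\otimes$-product reproduces the original partial action with the correct domain condition $\dom(sa) \in H$. Once this identification is in place, both halves reduce to citing Proposition~2.15, Theorem~2.16, and Proposition~3.11 in the right order; the real content is recognizing that the local group furnished by Proposition~3.11(2) is exactly the group $G_{F}$ that Theorem~2.16 uses to classify the strong congruences on the corresponding universal action.
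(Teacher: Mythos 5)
Your proposal is correct and follows essentially the same route as the paper's own proof: the first half identifies the Sch\"utzenberger action of $\mathcal{L}(S)$ on the $\mathcal{L}$-class of a directed idempotent $H$ with the action of $S$ on $S/H$ via the embedding $\iota$, and the second half chains Proposition~2.15(1), Theorem~2.16, and Proposition~3.11(2) exactly as the paper does (the paper cites the last identification only implicitly, in the phrase ``in other words''). Your write-up is in fact somewhat more explicit than the paper's, e.g.\ in invoking Proposition~3.7(ii) to see that every left coset of a directed $H$ is directed.
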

\proof An idempotent of $\mathcal{L}(S)$ is just an inverse subsemigroup $H$ of $S$ that is also a filter.
Denote by $\mathcal{L}_{H}$ the $\mathcal{L}$-class of $H$ in the inverse semigroup $\mathcal{L}(S)$.
The elements of $\mathcal{L}_{H}$ are just the left cosets of $H$ in $S$.
The inverse semigroup $\mathcal{L}(S)$ acts on the set $\mathcal{L}_{H}$, a Sch\"utzenberger action,
and so too does $S$ via the map $\iota$ of Proposition~3.8.
This latter action is equivalent to the action of $S$ on $S/H$.
We have therefore shown that $\mathcal{L}(S)$ encodes universal transitive actions of $S$ via its Sch\"utzenberger actions.

By Proposition~2.15(1) each transitive action of $S$ on a set $Y$ is strongly covered by a universal one $X$.
Let $H$ be a stabilizer of this universal action of $S$ on $X$.
Then the strong covering is determined by a strong congruence which by Theorem~2.16 is determined
by a subgroup of the $\mathcal{H}$-class in $\mathcal{L}(S)$ containing the idempotent $H$;
in other words, by a subgroup of the local group determined by the idempotent $H$.\qed \\

Finally, the topology on the groupoid $\mathcal{L}(S)$ is defined in terms of the embedding $S \rightarrow \mathcal{L}(S)$ as follows.
Let $s \in S$.
Define
$$U_{s} = \{A \in \mathcal{L}(S) \colon s \in A \}$$
and for $s_{1}, \ldots, s_{n} \leq s$ define
$$U_{s;s_{1}, \ldots, s_{n}} = U_{s} \cap U_{s_{1}}^{c} \cap \ldots \cap U_{s_{n}}^{c}.$$
Then the sets $U_{s;s_{1}, \ldots, s_{n}}$ form a basis for a topology.

\section{Matrix representations of inverse semigroups}

We deduce here results of the third author on the finite dimensional irreducible representations of inverse semigroups~\cite{S}.
There an approach based on groupoid algebras was used,
whereas here we use results of J.~A.~Green~\cite[Chapter 6]{Green}
and the universal property of $\mathcal{L}(S)$.

\subsection{Green's theorem and primitive idempotents}

The following theorem summarizes the contents of~\cite[Chapter 6]{Green}.  Let $A$ be a ring.
A module is assumed to be a left $A$-module unless otherwise stated.
We also consider only \emph{unitary} $A$-modules, that is, $A$-modules $M$ such that $AM=M$ (where $AM$ means the submodule generated by elements $am$ with $a\in A$ and $m\in M$).  If $A$ has a unit, then this is the same as saying that the unit acts as the identity on $M$.  In particular, a \emph{simple} $A$-module is an $A$-module $M$ such $AM\neq 0$ and there are no non-zero proper submodules of $M$. If $e$ is an idempotent of $A$ and $M$ is an $A$-module, then $eM$ is an $eAe$-module.
The functor $M\mapsto eM$ is called {\em restriction} and we sometimes denote it $\Res_e(M)$. It is well known and easy to check that $eM\cong \Hom_A(Ae,M)$, where the latter has a left $eAe$-action induced by the right action of $eAe$ on $Ae$.  For an $eAe$-module $N$, define \[\Ind_e(N)=Ae\otimes_{eAe} N.\]  The usual hom-tensor adjunction implies that $\Ind_e$ is the left adjoint of $\Res_e$. Moreover, $\Res_e\Ind_e$ is isomorphic to the identity functor on the category $eAe$-modules.  Indeed, $eae\otimes n\mapsto eaen$ is an isomorphism with inverse $n\mapsto e\otimes n$.  These isomorphisms are natural in $N$.

\begin{theorem}[Green]\label{Greenthm}
Let $A$ be a ring and $e\in A$ an idempotent.
\begin{enumerate}
\item If $N$ is a simple $eAe$-module, then the induced module \[\Ind_e(N)=Ae\otimes_{eAe} N\] has a unique maximal submodule $R(N)$, which can be described as the largest submodule of $\Ind_e(N)$ annihilated by $e$. Moreover, the simple module $\widetilde N=\Ind_e(N)/R(N)$ satisfies $N\cong e\widetilde N$.
\item If $M$ is a simple $A$-module with $eM\neq 0$, then $eM$ is a simple $eAe$-module and $M\cong \widetilde {eM}$.
\end{enumerate}
\end{theorem}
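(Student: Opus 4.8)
The plan is to prove Green's theorem by systematically exploiting the adjunction between $\Ind_e$ and $\Res_e$ together with the key fact, already recorded before the statement, that $\Res_e \Ind_e$ is naturally isomorphic to the identity functor on $eAe$-modules. This isomorphism $e(Ae \otimes_{eAe} N) \cong N$ via $eae \otimes n \mapsto eaen$ is the engine driving both parts.

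For part (1), I would first show that every proper submodule of $\Ind_e(N)$ is annihilated by $e$. Suppose $W \subseteq \Ind_e(N)$ is a submodule with $eW \neq 0$. Then $eW$ is an $eAe$-submodule of $e\Ind_e(N) \cong N$; since $N$ is simple, $eW = e\Ind_e(N)$, i.e. $eW = eN'$ where $N' = \Ind_e(N)$. The hard part is then to deduce $W = \Ind_e(N)$, and this is exactly where unitarity is essential: because $\Ind_e(N) = Ae \otimes_{eAe} N$ is generated by elements of the form $ae \otimes n = ae \cdot (e \otimes n)$, and $e \otimes n \in eW$ (using $eW = e\Ind_e(N)$ and that $e\otimes n$ is fixed by $e$), every generator $ae \otimes n = ae(e \otimes n)$ lies in $AW = W$. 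Hence $W = \Ind_e(N)$. This establishes that $R(N)$, defined as the largest submodule annihilated by $e$, contains every proper submodule, so it is the unique maximal submodule and the quotient $\widetilde N$ is simple. Finally $e\widetilde N = e\Ind_e(N)/eR(N) = e\Ind_e(N)/0 \cong N$ since $R(N)$ is annihilated by $e$, giving $N \cong e\widetilde N$.

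For part (2), let $M$ be simple with $eM \neq 0$. First I would verify $eM$ is a simple $eAe$-module: any nonzero $eAe$-submodule $V \subseteq eM$ generates an $A$-submodule $AV$ which equals $M$ by simplicity, and then $eM = eAV = (eAe)V = V$ using $e^2 = e$, so $V = eM$. To get $M \cong \widetilde{eM}$, I would use the adjunction: the identity map on $eM$ corresponds under $\Res_e \Ind_e \cong \mathrm{id}$ to a unit map, yielding a canonical $A$-module homomorphism $\varphi \colon \Ind_e(eM) \to M$ (the counit of the adjunction) sending $ae \otimes em \mapsto aem$. This $\varphi$ is surjective because its image is a nonzero submodule (it contains $eM \neq 0$) and $M$ is simple. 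Applying $\Res_e$ recovers an isomorphism on $e$-parts, so $\ker \varphi$ is annihilated by $e$, whence $\ker \varphi \subseteq R(eM)$; and $\ker\varphi$ is maximal because the image is simple, forcing $\ker \varphi = R(eM)$. Therefore $M \cong \Ind_e(eM)/R(eM) = \widetilde{eM}$.

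The main obstacle I anticipate is the careful bookkeeping with unitary modules in the surjectivity and generation arguments: one must everywhere invoke $AM = M$ (and the analogue $A\cdot\Ind_e(N) = \Ind_e(N)$) rather than a unit element, since $A$ need not be unital here — this is precisely the subtlety that makes the theorem applicable to non-unital settings like semigroup algebras, which is the point of including it. The adjunction formalism handles the naturality cleanly, but verifying that the counit $\varphi$ has kernel exactly $R(eM)$ requires combining the $e$-annihilation characterization of $R$ with the simplicity of $M$, and this is the step where the two halves of the theorem interlock.
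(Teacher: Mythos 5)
Your proof is correct, but there is nothing internal to compare it with: the paper does not prove this theorem at all --- it is stated explicitly as a summary of Chapter~6 of Green's book \cite{Green} and used as a black box. What your argument buys is a self-contained derivation from exactly the two facts the paper records just before the statement, namely the adjunction $\Ind_e\dashv\Res_e$ and the natural isomorphism $\Res_e\Ind_e\cong\mathrm{id}$ realized by $eae\otimes n\mapsto eaen$, $n\mapsto e\otimes n$. The key steps all check: if $W\subseteq\Ind_e(N)$ is a submodule with $eW\neq 0$, then by simplicity of $N$ one gets $eW=e\Ind_e(N)=\{e\otimes n: n\in N\}$, hence $W$ contains every $e\otimes n$ and therefore every $ae\otimes n=ae\cdot(e\otimes n)$, so $W=\Ind_e(N)$; and in part (2) the counit $ae\otimes em\mapsto aem$ is surjective by simplicity of $M$, its kernel is killed by $e$ because its restriction to $e\Ind_e(eM)$ coincides with the natural isomorphism (so is injective), and a maximal submodule contained in the proper submodule $R(eM)$ must equal it. Two points you should tighten. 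First, you never observe that $R(N)$ is a \emph{proper} submodule --- this is needed before calling it the unique maximal submodule, and it is immediate because $e\Ind_e(N)\cong N\neq 0$, so $\Ind_e(N)$ is not annihilated by $e$. Second, your claim that unitarity ($AW=W$) is ``essential'' in the generation step is misplaced: there you only need $AW\subseteq W$, which holds for any submodule; where unitarity and the paper's convention that simple means $AM\neq 0$ genuinely enter is in verifying that $\Ind_e(N)$ is unitary (again via $ae\otimes n=ae\cdot(e\otimes n)$) and hence that the quotient $\widetilde{N}$ satisfies $A\widetilde{N}\neq 0$, so that it is simple in the stated sense. Neither point is a gap in substance; both are one-line repairs.
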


Let $S$ be an inverse semigroup and suppose that $e$ is a minimum idempotent of $S$.
Then $eSe=G_e$, the maximal subgroup of $S$ at $e$, and is also the maximal group image of $S$.  Moreover, $Se=G_e=eS$ and the action of $S$ on the left of $Se$ factors through the maximal group image homomorphism.  Let $k$ be a commutative ring with unit.  Then $ekSe\cong kG_e$ and so Green's theorem shows that simple $kS$-modules $M$ with $eM\neq 0$ are in bijection with simple $kG_e$-modules via induction and restriction.  Moreover, since $kSe=kG_e$, we have that $\Ind_e(N)=N$ with the action of $S$ induced by the maximal group image homomorphism.  Thus $\Ind_e(N)$ already is a simple $kS$-module.  Let us consider the analogous situation for primitive idempotents.

Let $e$ be a primitive idempotent of an inverse semigroup with $0$.  Observe that in this case $eSe=G_e\cup \{0\}$ since $e,0$ are the only idempotents of $eSe$ and so if $s\neq 0$, then $ss^{-1}=e=s^{-1} s$.  Thus if $k_0S$ is the contracted semigroup algebra of $S$ (meaning the quotient of $kS$ by the ideal of scalar multiples of the zero of $S$), then $ek_0Se\cong kG_e$ and so again by Green's theorem, we have a bijection between simple $k_0S$-modules $M$ with $eM\neq 0$ and $kG_e$-modules via induction.  We aim to show now that if $N$ is a simple $kG_e$-module, then $\Ind_e(N)$ is already a simple $k_0S$-module.
Let $L_e$ be the $\mathcal L$-class of $e$.
Then since $e$ is primitive, it follows that $L_e = Se\setminus \{0\}$ and so $k_0Se=kL_e$ where $S$ acts on the left of $kL_e$ via linearly extending the left Sch\"utzenberger representation.
The group $G_e$ acts freely on the right of $L_e$ with orbits the $\mathcal H$-classes contained in $L_e$.  Thus $k_0Se=kL_e$ is free as a right $ek_0Se=kG_e$-module.  Let $T$ be a transversal to the $\mathcal{H}$-classes of $L_e$ and let $N$ be a $kG_e$-module.  Then as a $k$-module, $\Ind_e(N) = \bigoplus_{t\in T} t\otimes_k N$.  A fact we shall use is that any element of $L_e$ is primitive and so if $t_1\neq t_2\in T$, then $t_1t_1^{-1}\neq t_2t_2^{-1}$ and hence $t_1t_1^{-1} t_2=0$.

\begin{lemma}\label{inducedaresimplelemma}
If $N$ is a non-zero $kG_e$-module, then no non-trivial submodule of $\Ind_e(N)$ is annihilated by $e$.
\end{lemma}
\begin{proof}
Let $M$ be a non-zero submodule of $\Ind_e(N)$.  Notice that $M$ is annihilated by $e$ if and only if it is annihilated by the  ideal generated by $e$.  So let $m=\sum_{t\in T} t\otimes n_t$ (with only finitely many terms non-zero) be a non-zero element of $M$.  Then there exists $t\in T$ with $n_t\neq 0$.  By the observation just before the proof $tt^{-1} m = t\otimes n_t\neq 0$ and so $tt^{-1}$ does not annihilate $m$.  But $e=t^{-1} t$ generates the same ideal as $tt^{-1}$ and so $M$ is not annihilated by $e$.
\end{proof}

As a corollary, we obtain from Green's Theorem~\ref{Greenthm} that if $N$ is a simple $kG_e$-module, then $\Ind_e(N)$ is a simple $k_0S$-module.

\begin{corollary}\label{simple}
Let $S$ be an inverse semigroup, $e\in E(S)$ a primitive idempotent and $k$ a commutative ring with unit. If $N$ is a simple $kG_e$-module, then $\Ind_e(N)$ is a simple $kS$-module.
\end{corollary}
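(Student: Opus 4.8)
The plan is to combine Green's Theorem~\ref{Greenthm}(1) with Lemma~\ref{inducedaresimplelemma}, working throughout in the contracted semigroup algebra $k_0S$ rather than in $kS$ itself. Although the statement concerns $kS$-modules, the module $\Ind_e(N)=k_0Se\otimes_{kG_e}N$ is naturally a $k_0S$-module on which the zero of $S$ acts as the zero operator; consequently a $k$-subspace of $\Ind_e(N)$ is a $kS$-submodule exactly when it is a $k_0S$-submodule. So I would first reduce the claim to showing that $\Ind_e(N)$ is simple as a $k_0S$-module. I would also record at this stage that $\Ind_e(N)\neq 0$: since $N$ is simple it is non-zero, and as noted in the text $k_0Se=kL_e$ is free as a right $kG_e$-module on a transversal $T$ to the $\mathcal{H}$-classes of $L_e$, so $\Ind_e(N)=\bigoplus_{t\in T}t\otimes_k N\neq 0$. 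In particular $e\,\Ind_e(N)\cong N\neq 0$ via the natural isomorphism $\Res_e\Ind_e\cong\mathrm{id}$, so the action is genuinely non-trivial.

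Next I would apply Green's Theorem~\ref{Greenthm}(1) with the ring $A=k_0S$ and the idempotent $e$. The preceding discussion has already identified $ek_0Se\cong kG_e$ for a primitive idempotent $e$, so $N$ really is a simple $eAe$-module and the theorem applies. It then furnishes a unique maximal submodule $R(N)$ of $\Ind_e(N)$, characterised as the \emph{largest} submodule annihilated by $e$, together with a simple quotient $\widetilde N=\Ind_e(N)/R(N)$.

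The crux is then immediate from Lemma~\ref{inducedaresimplelemma}: since no non-trivial submodule of $\Ind_e(N)$ is annihilated by $e$, the largest submodule annihilated by $e$ must be zero, that is, $R(N)=0$. Hence $\Ind_e(N)=\Ind_e(N)/R(N)=\widetilde N$ is simple as a $k_0S$-module, and therefore simple as a $kS$-module by the reduction in the first paragraph.

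I expect no serious obstacle here, since the substantive work was carried out in Lemma~\ref{inducedaresimplelemma}, which forces the Green radical $R(N)$ to vanish. The only point genuinely requiring care is the bookkeeping between $kS$- and $k_0S$-modules: one must check that passing to the contracted algebra neither creates nor destroys submodules of $\Ind_e(N)$ and that the induced module is non-zero, so that simplicity over the two algebras coincides.
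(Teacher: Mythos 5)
Your proposal is correct and is essentially the paper's own argument: Green's Theorem~\ref{Greenthm}(1) applied to $A=k_0S$ identifies the unique maximal submodule $R(N)$ of $\Ind_e(N)$ as the largest submodule annihilated by $e$, and Lemma~\ref{inducedaresimplelemma} forces $R(N)=0$, so $\Ind_e(N)$ is simple. The bookkeeping you add between $kS$- and $k_0S$-modules (and the observation that $\Ind_e(N)\neq 0$) is left implicit in the paper but is exactly the right justification.
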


If $k$ is a field,
then from $\Ind_e(N) = \bigoplus_{t\in T} t\otimes_k N$,
we see that $\Ind_e(N)$ is finite dimensional if and only if $T$ is finite and $N$ is finite dimensional.

\subsection{The main result}

Suppose now that $S$ is any inverse semigroup and $e\in E(S)$.
Let $I_e=SeS\setminus J_e$ be the ideal of elements strictly $\mathcal J$-below $e$.
If $N$ is a $kG_e$-module, then let \[\Ind_e(N) = k_0[S/I_e]e\otimes_{kG_e} N=(kS/kI_e)e\otimes _{kG_e} N.\]
Equivalently, if $L_e$ is the $\mathcal{L}$-class of $e$, then $kL_e$ is a free right $kG_e$-module with basis the set of $\mathcal H$-classes of $L_e$
and also it is a left $kS$-module by means of the action of $S$ on the left of $L_e$ by partial bijections
via the Sch\"utzenberger representation.  Then $\Ind_e(N) = kL_e\otimes _{kG_e} N$.  Suppose now that the $\mathcal D$-class of $e$ contains only finitely many idempotents; in this case we say that $e$ has \emph{finite index} in $S$.  Under the hypothesis that $e$ has finite index it is well known that if $f\in E(S)$ with $f<e$, then $SfS\neq SeS$ and so $f\in I_e$.  Thus $e$ is primitive in $S/I_e$ and so Corollary~\ref{simple} shows that $\Ind_e(N)$ is simple for any simple $kG_e$-module in this setting.

We are now ready to construct the finite dimensional irreducible representations of an inverse semigroup over a field.
This was first carried out by Munn~\cite{Munn}, whereas the construction presented here first appeared in~\cite{S}
where it was deduced as a special case of a result on \'etale groupoids.
Our approach here uses the inverse semigroup $\mathcal{L}(S)$.
Fix a field $k$.
First we construct a collection of simple $kS$-modules.

\begin{proposition}\label{constructirreps}
Let $e\in E(\mathcal{L}(S))$ have finite index and let $N$ be a simple $kG_e$-module.
Then $\Ind_e(N)$ is a simple $kS$-module.  Moreover, $\Ind_e(N)$ is finite dimensional if and only if $N$ is.
\end{proposition}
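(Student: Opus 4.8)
The plan is to deduce the statement from Corollary~\ref{simple}, applied to the inverse semigroup $\mathcal{L}(S)$ in which $e$ really is an idempotent, and then to transport simplicity back along the embedding $\iota\colon S\to\mathcal{L}(S)$. Recall from Proposition~3.11 that $G_e$ is the local group at $e$ and that the $\mathcal{L}$-class $L_e$ of $e$ in $\mathcal{L}(S)$ consists of the left cosets of $e$; the $kS$-module structure on $\Ind_e(N)=kL_e\otimes_{kG_e}N$ is the one obtained by letting $S$ act on $L_e$ through $\iota$ via the left Sch\"utzenberger representation of $\mathcal{L}(S)$. Since $S$ acts \emph{through} $\mathcal{L}(S)$, every $k\mathcal{L}(S)$-submodule is automatically a $kS$-submodule; the entire content is to prove the converse, so that the two submodule lattices coincide.

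First I would establish that $\Ind_e(N)$ is simple as a $k\mathcal{L}(S)$-module. Because $e$ has finite index, its $\mathcal{D}$-class in $\mathcal{L}(S)$ has only finitely many idempotents, and the stability argument quoted in the text shows that every idempotent strictly below $e$ lies in $I_e=\mathcal{L}(S)e\mathcal{L}(S)\setminus J_e$; hence $e$ is primitive in the Rees quotient $\mathcal{L}(S)/I_e$. As $L_e$ and $G_e$ are unchanged on passing to this quotient (elements of $I_e$ act as the empty map on $L_e$), the module $\Ind_e(N)$ is literally the induced module for $\mathcal{L}(S)/I_e$, and Corollary~\ref{simple} (resting on Theorem~\ref{Greenthm} and Lemma~\ref{inducedaresimplelemma}) shows it is a simple $k_0(\mathcal{L}(S)/I_e)$-module, equivalently a simple $k\mathcal{L}(S)$-module.

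The hard part is the descent to $kS$, and here finite index is used a second time. The key observation is that each $A\in\mathcal{L}(S)$ acts on $\Ind_e(N)$ in exactly the same way as a single generator $\iota(s)$ with $s\in A$. Indeed $A=\bigwedge_{s\in A}\iota(s)$ is a directed meet in $\mathcal{L}(S)$, and by Lemma~3.6 together with the description of directed meets as closed unions in Proposition~3.8, the Sch\"utzenberger action carries this to the corresponding meet of partial bijections: for $x\in L_e$ one has $A\otimes x=(Ax)^{\uparrow}=\bigwedge_{s\in A}(sx)^{\uparrow}$. Since $s\in A$ gives $A\leq\iota(s)$, the map $x\mapsto A\otimes x$ is a common restriction of all the maps $x\mapsto\iota(s)\otimes x$; thus wherever $A$ is defined on a coset it agrees with every $\iota(s)$, $s\in A$, and wherever $A$ is undefined some $\iota(s)$ with $s\in A$ is already undefined there. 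Because $\Ind_e(N)=\bigoplus_{t\in T}t\otimes_k N$ has \emph{finite} support set $T$ (finite index), directedness of $A$ lets me choose a single $s\in A$ lying below all the finitely many witnesses at once; for this $s$ the operators induced by $A$ and by $\iota(s)$ agree on every $t\in T$, hence on all of $\Ind_e(N)$. Consequently the images of $kS$ and of $k\mathcal{L}(S)$ in $\mathrm{End}_k(\Ind_e(N))$ coincide, the two submodule lattices are equal, and the $k\mathcal{L}(S)$-simplicity of the previous step upgrades to $kS$-simplicity. (Nondegeneracy $kS\cdot\Ind_e(N)\neq 0$ follows from the same collapse: for $0\neq\xi\in e\,\Ind_e(N)\cong N$ we have $\xi=e\xi=\iota(s)\xi$ for a suitable $s\in e$, so $\iota(s)$ acts nontrivially.)

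Finally, the dimension count is immediate: as a $k$-module $\Ind_e(N)=\bigoplus_{t\in T}t\otimes_k N$, and finite index forces $T$ to be finite, so $\dim_k\Ind_e(N)=|T|\cdot\dim_k N$ is finite precisely when $\dim_k N$ is. I expect the only genuine obstacle to be the descent in the third paragraph --- specifically, verifying that the Sch\"utzenberger action of $\mathcal{L}(S)$ carries the directed meet $A=\bigwedge_{s\in A}\iota(s)$ to the meet of the corresponding partial bijections, so that on the finite-support module each $A$ collapses to a single $\iota(s)$; everything else is a direct appeal to the Green-theoretic machinery already set up in the text.
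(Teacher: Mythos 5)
Your proposal is correct, and its skeleton is the same as the paper's: first get simplicity of $\Ind_e(N)$ over $k\mathcal{L}(S)$ from primitivity of $e$ in $\mathcal{L}(S)/I_e$ and Corollary~\ref{simple}, then show that every $A\in\mathcal{L}(S)$ acts on $\Ind_e(N)$ exactly as some single element of $S$ does, using finiteness of the transversal $T$ to the $\mathcal{H}$-classes of $L_e$. The one point where you genuinely diverge is the justification of that pointwise collapse. The paper fixes $t\in T$, writes $At=\bigwedge_{d}s_dt$, and invokes Theorem~3.2.16 of \cite{Law2} (in a $\mathcal{D}$-class with finitely many idempotents, distinct elements are incomparable in the natural partial order) to obtain the dichotomy that the directed family $\{s_dt\}$ is either eventually strictly $\mathcal{L}$-below $e$ or constant in $L_e$. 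You instead exploit the fact that the Sch\"utzenberger representation is order-preserving into $I(L_e)$, where the order is restriction: since $A\leq\iota(s)$ for $s\in A$, wherever $A$ is defined it must agree with every $\iota(s)$ (equal domains together with comparability force equality), while wherever $A$ is undefined the meet formula $A\otimes x=\bigwedge_{s\in A}(sx)^{\uparrow}$ plus directedness produces an undefined witness $\iota(s_t)$, and passing below the finitely many witnesses handles all of $T$ at once. This is a modest but real simplification: it replaces the citation of the incomparability theorem by elementary order theory (finite index enters only through finiteness of $T$ and primitivity of $e$ in the Rees quotient, exactly as in the paper), and your parenthetical remark even settles the nondegeneracy condition $kS\cdot\Ind_e(N)\neq 0$, which the paper leaves implicit.
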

\begin{proof}
The above discussion shows that $\Ind_e(N)$ is simple as a $k\mathcal{L}(S)$-module so we just need to show that any $S$-invariant subspace is $\mathcal{L}(S)$-invariant.   In fact, we show that each element of $\mathcal{L}(S)$ acts the same on $\Ind_e(B)$ as some element of $S$.  It will then follow that any $S$-invariant subspace is $\mathcal{L}(S)$-invariant and so $\Ind_e(N)$ is a simple $kS$-module.

Let $T$ be a transversal for the orbits of $G_e$ on $L_e$.  Then $T$ is finite since these orbits are in bijection with $\mathcal R$-classes of $D_e$, which in turn are in bijection with idempotents of $D_e$.
Let $A\in \mathcal {L}(S)$ and write $A=\bigwedge_{d\in D}s_d$ with $s\in S$ and $D$ a directed set.  We claim that if $t\otimes n$ is an elementary tensor with $t\in T$, then there exists $d_t\in D$ depending only on $t$ (and not $n$) such that $A(t\otimes n)=s_d(t\otimes n)$ for all $d\geq d_t$. By~\cite[Section 1.4, Proposition 19]{Law2}, we have $At = \bigwedge_{d\in D}(s_dt)$.
Since the $\mathcal D$-class of $e$ has only finitely many idempotents,
it follows by Theorem~3.2.16 of \cite{Law2} that distinct elements of $\mathcal D$ are not comparable in the natural partial order.
Since the set $\{s_dt\mid d\in D\}$ is directed, either $s_dt\lneq_{\mathcal L}e$ for all sufficiently large elements of $D$ or $s_dt$ is an element $\ell$ of $L_e$ independent of $d$.  In the first case $At\lneq_{\mathcal L} e$ and in the second case $At=\ell$.  Thus in the first case, $A(t\otimes n) = 0 = s_d(t\otimes n)$ for $d$ large enough, whereas in the second case $A(t\otimes n)=\ell\otimes n=s_d(t\otimes n)$ for all $d\in D$.  We conclude $d_t$ exists.

Since $T$ is finite, we can find $d_0\in D$ with $d_0\geq d_t$ for all $t\in R$.  Then $A$ and $s_{d_0}$ agree on all elements of the form $t\otimes n$ with $t\in T$ and $n\in N$.  But such elements span $\Ind_e(N)$ and so we conclude that $A$ and $s_{d_0}$ agree on $\Ind_e(N)$.

The final statement follows from the previous discussion.
\end{proof}

Note that application of the restriction functor and the fact that $\Res_e\Ind_e$ is isomorphic to the identity shows that $\Ind_e(N)\cong \Ind_e(M)$ implies $N\cong M$.
Also, if $e,f$ are two finite index idempotents of $\mathcal L(S)$ and $e\nleq_{\mathcal J} f$, then $f$ annihilates $\Ind_e(N)$ for any $kG_e$-module and hence all elements of $f$, viewed as a filter, annihilate $\Ind_e(N)$. On the other hand, no element of the filter $e$ annihilates $\Ind_e(N)$.  It follows that if $e,f$ are finite index idempotents that are not $\mathcal{D}$-equivalent, then the modules of the form $\Ind_e(N)$ and $\Ind_f(M)$ are never isomorphic.  Clearly, $\mathcal{D}$-equivalent idempotents give isomorphic collections of simple modules.  Thus, for each $\mathcal{D}$-class with finitely many idempotents, we get a distinct set of simple $kS$-modules (up to isomorphism).

The following fact is well known and easy to prove.

\begin{proposition}\label{finitelymanyidempotents}
Let $k$ be a field and $V$ an $n$-dimensional $k$-vector space.  Then any semilattice  in $\mathrm{End}_k(V)$ has size at most $2^n$.
\end{proposition}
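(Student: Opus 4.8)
The plan is to exploit the fact that a \emph{semilattice} $E \subseteq \mathrm{End}_k(V)$ is, in particular, a set of pairwise commuting idempotent endomorphisms, and to simultaneously diagonalise them. First I would record that every idempotent $e$ satisfies $e^2 = e$, so its minimal polynomial divides $x(x-1)$, which is a product of distinct linear factors over any field (since $0 \neq 1$). Hence each $e \in E$ is diagonalisable over $k$, with all of its eigenvalues lying in $\{0,1\} \subseteq k$; in particular no field extension is needed.

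The key step is the standard fact that a family of pairwise commuting diagonalisable operators on a finite-dimensional space is \emph{simultaneously} diagonalisable. I would prove this by induction on $\dim V$: if every element of $E$ is scalar there is nothing to do; otherwise choose a non-scalar $e \in E$ and decompose $V = V_0 \oplus V_1$ into its $0$- and $1$-eigenspaces, both nonzero and each of dimension $< n$. Every operator commuting with $e$ preserves each $V_i$, and the restriction of a diagonalisable operator to an invariant subspace is again diagonalisable; so by induction the restrictions of $E$ to $V_0$ and to $V_1$ are simultaneously diagonalisable, and concatenating the two bases diagonalises all of $E$ at once. This yields a basis $v_1, \dots, v_n$ of $V$ consisting of common eigenvectors.

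Finally I would count. For each $e \in E$ and each $i$ we have $e v_i = \varepsilon_i(e) v_i$ with $\varepsilon_i(e) \in \{0,1\}$, so in this basis $e$ is represented by the diagonal matrix with entries $\varepsilon_1(e), \dots, \varepsilon_n(e)$. The assignment $e \mapsto (\varepsilon_1(e), \dots, \varepsilon_n(e)) \in \{0,1\}^n$ is injective, since this tuple is precisely the diagonal of $e$ and therefore determines $e$ completely. Consequently $|E| \le |\{0,1\}^n| = 2^n$, as required.

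The only real obstacle is the simultaneous diagonalisation step, and specifically making sure it goes through over an \emph{arbitrary} field rather than an algebraically closed one. This is exactly what the opening observation takes care of: the eigenvalues $0$ and $1$ are distinct and already present in $k$, so the inductive eigenspace decomposition applies verbatim with no passage to a splitting field. Everything else — the diagonalisability of idempotents and the final counting argument — is routine.
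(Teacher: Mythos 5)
Your proof is correct and follows essentially the same route as the paper's: idempotents are diagonalisable, a commuting family of diagonalisable operators is simultaneously diagonalisable, and the diagonal idempotents number at most $2^n$. The paper states this in three terse sentences; you have simply supplied the standard details (the minimal polynomial $x(x-1)$, the eigenspace induction, and the injectivity of the map to $\{0,1\}^n$), including the correct observation that no field extension is needed.
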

\begin{proof}
Any idempotent matrix is diagonalizable and so any semilattice of matrices is simultaneously diagonalizable. But the multiplicative monoid of $k^n$ has $2^n$ idempotents.
\end{proof}

We can now complete the description of the finite dimensional irreducible representations of an inverse semigroup.
In the statement of the theorem below,
it is worth recalling that
$e = H$ is a finite index, closed directed subsemigroup of $S$
and $G_{e}$ is the group $\overline{E(H)}/\sigma$ described in Theorem~2.16.

\begin{theorem}
Let $k$ be a field and $S$ an inverse semigroup.
Then the finite dimensional simple $kS$-modules are precisely those of the form $\Ind_e(N)$
where $e$ is a finite index idempotent of $\mathcal{L}(S)$
and $N$ is a finite dimensional simple $kG_e$-module.
\end{theorem}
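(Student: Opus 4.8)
The statement has two halves, and one of them is already in hand. The assertion that each $\Ind_e(N)$, with $e$ a finite index idempotent of $\mathcal{L}(S)$ and $N$ a finite dimensional simple $kG_e$-module, is a finite dimensional simple $kS$-module is precisely Proposition~\ref{constructirreps} together with its last clause. So the plan is to establish the converse: every finite dimensional simple $kS$-module $M$ arises this way.

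The first move is to promote $M$ to a $k\mathcal{L}(S)$-module. The representation $\theta\colon S\to \mathrm{End}_k(M)$ has image an inverse semigroup $\bar S=\theta(S)$ (a homomorphic image of $S$), and by Proposition~\ref{finitelymanyidempotents} its semilattice of idempotents $\theta(E(S))$ is finite. In an inverse semigroup with finitely many idempotents every down-directed subset is eventually constant: if $t\le t'$ with $\dom(t)=\dom(t')$ then $t=t'$, so once the domain idempotents along a directed family stabilise the elements themselves coincide. Hence $\bar S$ has all directed meets, and by the universal property of the directed meet completion (Proposition~3.8) the map $\theta$ extends to a homomorphism $\bar\theta\colon \mathcal{L}(S)\to \bar S$; linearising makes $M$ a $k\mathcal{L}(S)$-module. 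Since $S$ and $\mathcal{L}(S)$ act through the same image $\bar S$, the $S$- and $\mathcal{L}(S)$-submodules of $M$ coincide, so $M$ is simple as a $k\mathcal{L}(S)$-module.

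Next I would pin down the idempotent. Choose $\bar e\in E(\bar S)$ minimal among the idempotents that do not annihilate $M$, and set $F=\{f\in E(S)\colon \theta(f)\ge \bar e\}$, the idempotents of $S$ acting as the identity on $\bar e M$. One checks that $F$ is a filter in $E(S)$, and since $\bar e\in E(\bar S)=\theta(E(S))$ there is $f_0\in F$ with $\theta(f_0)=\bar e$; thus the idempotent $e=F^{\uparrow}$ of $\mathcal{L}(S)$ satisfies $\bar\theta(e)=\bar e$ and $eM=\bar e M\ne 0$. The crucial point — and the main obstacle — is that $e$ has finite index, i.e. the $\mathcal{D}$-class of $e$ in $\mathcal{L}(S)$ has only finitely many idempotents. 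This is where finite dimensionality must enter decisively. The idempotents of $D_e$ are the conjugates of $e$, and each is again a filter of the shape $\{f\colon \theta(f)\ge \bar e'\}$ for its image idempotent $\bar e'=\bar\theta(f')\in D_{\bar e}$; by minimality these images are themselves minimal non-annihilating idempotents, so $f'$ is recovered from $\bar e'$ and $\bar\theta$ is injective on the idempotents of $D_e$. As $E(\bar S)$ is finite, only finitely many $\bar e'$ can occur, giving finite index.

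Finally I would invoke Green. Working with the contracted algebra $A=k_0\mathcal{L}(S)$ (killing the ideal $I_e$ as in Section~4.2), finite index makes $e$ primitive in $\mathcal{L}(S)/I_e$, so $eAe\cong kG_e$ where $G_e=\overline{E(H)}/\sigma$ by Proposition~3.11(2). Since $M$ is simple and $eM\ne 0$, Theorem~\ref{Greenthm}(2) gives that $N:=eM$ is a simple $kG_e$-module, finite dimensional because $M$ is, and that $M\cong \widetilde{eM}=\Ind_e(eM)/R(eM)$. But $e$ has finite index, so by Corollary~\ref{simple} (equivalently Proposition~\ref{constructirreps}) the module $\Ind_e(N)$ is already simple; its unique maximal submodule $R(N)$ must therefore be zero, and we conclude $M\cong \Ind_e(N)$, completing the converse and hence the theorem.
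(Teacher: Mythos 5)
Your overall strategy is the same as the paper's: the forward direction is exactly Proposition~\ref{constructirreps}; for the converse you extend $\theta$ to $\overline{\theta}\colon \mathcal{L}(S)\to \mathrm{End}_k(M)$ via Proposition~3.8 and the finiteness of $\theta(E(S))$ (Proposition~\ref{finitelymanyidempotents}), locate an idempotent $e$ of $\mathcal{L}(S)$ mapping to a minimal non-zero idempotent $\bar{e}$ of the image, argue that $e$ has finite index, and finish with Green's theorem together with the simplicity of the induced module. Your explicit description of $e$ as $F^{\uparrow}$ with $F=\{f\in E(S)\colon \theta(f)\geq \bar{e}\}$ is correct and is precisely the minimum idempotent of $\overline{\theta}^{-1}(\bar{e})$ that the paper uses. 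However, there is a genuine gap at the point where Green's theorem is invoked.

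To apply Theorem~\ref{Greenthm}(2) with the ring $A=k_0[\mathcal{L}(S)/I_e]$ --- which is what you need, since only after killing $I_e$ does $eAe$ become $kG_e$; over $k\mathcal{L}(S)$ itself, $e\,k\mathcal{L}(S)\,e$ is the algebra of the whole local monoid $e\mathcal{L}(S)e$, not of $G_e$ --- you must first show that $M$ is a module over that quotient, i.e.\ that every element of $\mathcal{L}(S)$ strictly $\mathcal{J}$-below $e$ acts as zero on $M$. You never prove this: the parenthetical ``killing the ideal $I_e$ as in Section~4.2'' simply assumes it. The same missing ingredient also undermines your finite-index step: the assertion that every idempotent of $D_e$ is ``again a filter of the shape $\{f\colon \theta(f)\geq \bar{e}'\}$'' is true, but it does not follow merely from the minimality of $\bar{e}'$; it needs a separate argument. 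The paper fills both holes with a single claim: if $e''<e'$ and $e'\,\mathcal{D}\,e$, then $\overline{\theta}(e'')=0$. Its proof is a conjugation argument: choosing $A$ with $A^{-1}A=e$ and $AA^{-1}=e'$, one gets $A^{-1}e''A<e$, so $\overline{\theta}(A^{-1}e''A)$ is an idempotent $\leq \bar{e}$, hence equal to $0$ or $\bar{e}$ by minimality of $\bar{e}$; it cannot be $\bar{e}$ because $e$ is the minimum idempotent of $\overline{\theta}^{-1}(\bar{e})$ (your maximal-filter description), so it is $0$, whence $\overline{\theta}(e'')=\overline{\theta}(AA^{-1}e''AA^{-1})=0$. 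From this claim one deduces both that $\overline{\theta}$ is injective on the idempotents of $D_e$ (hence finite index, since the image has finitely many idempotents) and that $\overline{\theta}(I_e)=0$ (hence $M$ is a $k_0[\mathcal{L}(S)/I_e]$-module and $eAe\cong kG_e$). With this lemma inserted, your argument closes up and coincides with the paper's proof.
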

\begin{proof}
It remains to show that every simple $kS$-module $M$ is of this form.  Let $\theta\colon S\to \mathrm{End}_k(V)$ be the corresponding irreducible representation.  Then $T=\theta(S)$ is an inverse semigroup with finitely many idempotents and so trivially directed meet complete.  Thus $\theta$ extends to a homomorphism $\overline{\theta}\colon \mathcal{L}(S)\to \mathrm{End}_k(V)$ by the universal property.  Trivially $\overline{\theta}$ must be irreducible as well.  Let $f$ be a minimal non-zero idempotent of $T=\theta(S)=\overline{\theta}(\mathcal{L}(S))$.  Then $\overline{\theta}^{-1}(f)$ is directed and so has a minimum element $e$.

Suppose $e'\mathcal{D} e$.     Suppose $e''<e'$.  We claim $\overline{\theta}(e'')=0$.  Indeed, choose $A\in \mathcal{L(S)}$ such that $A^{-1}A=e$ and $AA^{-1}=e'$.  Then $A^{-1}e''A< A^{-1}e'A=e$ and so $\overline{\theta}(A^{-1}e''A) =0$.  Thus $\overline{\theta}(e'') = \overline{\theta}(AA^{-1}e''AA^{-1})=0$.  We conclude $\overline{\theta}$ is injective on the idempotents of $D_e$.  Otherwise, we can find $e_1,e_2\in D_e$ with $\overline{\theta}(e_1)=\overline{\theta}(e_2)$.  Then $e_1e_2\leq e_1,e_2$ and $\overline{\theta}(e_1)=\overline{\theta}(e_1e_2)=\overline{\theta}(e_2)$.  Thus $e_1=e_1e_2=e_2$ by the above claim.  We conclude that $e$ has finite index since $T$ has finitely many idempotents.

By choice of $e$, it now follows that $\overline{\theta}$ factors through $S/I_e$ and hence is a $k_0[S/I_e]$-module.  Moreover, $e$ is primitive in $S/I_e$.  (If $I_e=\emptyset$, then we interpret $k_0[S/I_e]$ as $kS$ and $e$ is the minimum idempotent.)    Since $eM=fM\neq 0$ by choice of $f$, it follows by Green's theorem that $N=eM$ is a simple $ek_0[S/I_e]e=kG_e$-module, necessarily finite dimensional.  The identity map $N\to eM$ corresponds under the adjunction to a non-zero homomorphism $\psi\colon \Ind_e(N)\to M$.  But we already know that $\Ind_e(N)$ is simple by Proposition~\ref{constructirreps}.  Schur's lemma then yields that $\psi$ is an isomorphism.  This completes the proof.
\end{proof}

\section*{Appendix}
After an early version of this paper existed we discovered that Jonathon Funk and Pieter Hofstra independently arrived at what we call universal actions, and which they call torsors~\cite{FunkHofstra}.  They show that these correspond exactly to the points of the classifying topos of the inverse semigroup.  Further connections between our work and their work will be explored in an upcoming paper by Funk, Hofstra and the third author.  In particular, we connect the filter construction of Paterson's groupoid with the soberification of the inductive groupoid of the inverse semigroup and the soberification of the inverse semigroup.  We also show that actions of the inverse semigroup on sober spaces correspond to actions of the soberification of the inductive groupoid on sober spaces.


\end{document}